\numberwithin{equation}{section}
\theoremstyle{plain}
\newtheorem{theorem}{Theorem}[section]
\newtheorem{lemma}[theorem]{Lemma}
\newtheorem{proposition}[theorem]{Proposition}
\newtheorem{corollary}[theorem]{Corollary}
\newtheorem*{mtheorem}{Main Theorem}
\theoremstyle{definition}
\newtheorem{definition}[theorem]{Definition}
\newtheorem{remark}[theorem]{Remark}
\newtheorem{open.problem}[theorem]{Open Problem}
\newcommand{\Leb}[1]{\mathscr{L}^{#1}} 
\newcommand{\N}{\mathbb{N}}
\newcommand{\R}{\mathbb{R}}
\newcommand{\eps}{\varepsilon}
\title[Gaussian Gamma-convergence]{Gamma-convergence of fractional Gaussian perimeter}
\author[A.\ Carbotti]{Alessandro Carbotti}
\address{Dipartimento di Matematica
	e Fisica ``E. De Giorgi'', Universit\`a del Salento,
	Via Per Arnesano, 73100 Lecce, Italy.}
\email{alessandro.carbotti@unisalento.it}
\author[S.\ Cito]{Simone Cito}
\address{Dipartimento di Matematica
	e Fisica ``E. De Giorgi'', Universit\`a del Salento,
	Via Per Arnesano, 73100 Lecce, Italy.}
\email{simone.cito@unisalento.it}
\author[D. A. \ La Manna]{Domenico Angelo La Manna}
\address{Department of Mathematics and Statistics, P.O.\ Box 35 (MaD), FI-40014, University of Jyv\"askyl\"a, Finland.}
\email{domenico.a.lamanna@jyu.fi}
\author[D.\ Pallara]{Diego Pallara}
\address{Dipartimento di Matematica
	e Fisica ``E. De Giorgi'', Universit\`a del Salento, and INFN, Sezione di Lecce,
	Via Per Arnesano, 73100 Lecce, Italy.}
\email{diego.pallara@unisalento.it}
\date{\today}  \linespread{1.2}
\keywords{Fractional Perimeters, Gaussian analysis, Gamma-convergence}
\subjclass[2010]{35R11, 49Q20}
\begin{document}
	\begin{abstract}
We prove the $\Gamma$-convergence of the renormalised fractional Gaussian $s$-perimeter to the 
Gaussian perimeter
as $s\to 1^-$. Our definition of fractional perimeter comes from that of the fractional powers of 
Ornstein-Uhlenbeck operator given via Bochner subordination formula. As a typical feature of the 
Gaussian setting, the constant appearing in front of the $\Gamma$-limit does not depend on the dimension.
	\end{abstract}
	
	\maketitle
	
	\tableofcontents

\section{Introduction}\label{sec:intro}

For $s\in(0,1)$, fractional $s$-perimeters in the Euclidean space have been introduced in the seminal 
paper by \cite{CafRoqSav} to study nonlocal minimal surfaces of fractional type, while a generalised 
notion of nonlocal perimeter defined through a positive, compactly supported radial kernel has been  
introduced in \cite{MazRosTol}. In the last years fractional perimeters have been object of many 
studies in relation with fractal sets \cite{lombardini}, phase transitions \cite{valdinoci}, 
and nonlocal mean curvature flows \cite{ChaMorPon}, see also the recent survey \cite{dipierro}. One can 
think of fractional perimeters as the sum of local interactions of a measurable set $E$ with its 
complement $E^c$ in a fixed smooth open and connected set $\Omega$ plus a nonlocal contribution coming 
from the interaction between points in $\Omega$ and in $\Omega^c$. Namely
\begin{equation}
\label{eq:euclideansperimeter}
\begin{split}
P_s(E;\Omega)&=L_s(E\cap\Omega,E^c\cap\Omega)+\big(L_s(E\cap\Omega,E^c\cap\Omega^c)
+L_s(E\cap\Omega^c,E^c\cap\Omega)\big) 
\\
&:=P^L_s(E;\Omega)+P^{NL}_s(E;\Omega),
\end{split}
\end{equation}
where, for any $s\in(0,1)$ and for any $A,B$ measurable and disjoint sets we set
$$
L_s(A,B):=\int_A\int_B\frac{dxdy}{|x-y|^{N+s}}.
$$
The functional $L_s$ denotes the interaction between $A$ and $B$ driven by the fractional singular kernel 
$|x-y|^{-N-s}$ that arises from the Bochner subordination formula for fractional powers of second order 
positive definite linear elliptic operators (see e.g. \cite{MarSan}) through the formula
\begin{equation}
\label{eq:derivkernel}
\frac{C_{N,s}}{|x-y|^{N+s}}=\int_0^{\infty}\frac{H_t(|x-y|)}{t^{\frac s2+1}}dt,
\end{equation}
where $C_{N,s}=\frac{2^s}{\pi^{N/2}}\Gamma\left(\frac{N+s}{2}\right)$ and $H_t$ denotes the 
Gauss-Weierstrass kernel
\begin{equation}\label{GWkernel}	
H_t(r)=\frac{1}{(4\pi t)^{N/2}}e^{-\frac{r^2}{4t}}.
\end{equation}
We notice that when $s\to 1^-$ the local part $P^L_s(E;\Omega)$ goes always to infinity unless 
$E\subset\Omega^c$ or $\Omega\subset E$, as observed in \cite{brezis}. This fact suggests to 
renormalise appropriately the functional in order to have a finite pointwise limit as $s\to 1^-$ as 
shown in \cite{CafVal}, where the authors prove that renormalising by the factor $1-s$, 
the $s$-fractional perimeter converges to the perimeter in the sense of De Giorgi when $s\to 1^-$. 
See also \cite{DiFiPaVa} for the limiting behaviour of the $s$-perimeter as $s\to 0^+$. These 
results follow the approximation of local energies by nonlocal ones proved in 
\cite{BouBreMir, davila, MazSha, ponce}. Moreover, in \cite{AmDeMa} the authors show that 
$(1-s)P_s(E;\Omega)$ approaches the perimeter of $E$ in $\Omega$ even in the $\Gamma$-convergence 
sense as $s\to 1^-$. A similar result has been obtained in \cite{BerPal} for more general kernels 
but with different growth. See also \cite{AlbBel, SavVal} for some applications in phase transitions, 
and \cite{CaDoPaPi} for a partial result including kernels with the same growth as the fractional one 
in the more general setting of Carnot Groups.

Fractional perimeters can be equivalently defined by minimising a Dirichlet energy associated 
with an extension problem for the fractional Laplacian as proved by Caffarelli and Silvestre in 
\cite{CafSil}. This result has been generalised by Stinga and Torrea in \cite{StiTor} for 
fractional powers of more general operators. This last extension has been used in \cite{NovPalSir} 
in order to introduce a fractional Gaussian perimeter in the more general setting of abstract Wiener 
spaces and to prove that the halfspace is the unique minimiser among all sets with prescribed Gaussian 
measure as proved for the Gaussian perimeter in \cite{borell, CarKer, EhrScand, ehrhard, SudCir}. In \cite{CaCiLaPa} the same 
authors of this paper prove the related stability estimate for the fractional Gaussian
isoperimetric inequality in finite dimension.

A different notion of fractional Gaussian perimeter has been given in \cite{DL}
\begin{align}
\label{eq:derosalmanna}
\mathcal{J}^\gamma_s(E;\Omega):=&\int_{E\cap\Omega}e^{-\frac{|x|^2}{4}}dx
\int_{E^c\cap\Omega}\frac{e^{-\frac{|y|^2}{4}}}{|x-y|^{N+s}}dy 
\\ \nonumber
&+\int_{E\cap\Omega}e^{-\frac{|x|^2}{4}}dx\int_{E^c\cap\Omega^c}\frac{e^{-\frac{|y|^2}{4}}}{|x-y|^{N+s}}dy
+\int_{E\cap\Omega^c}e^{-\frac{|x|^2}{4}}dx\int_{E^c\cap\Omega}\frac{e^{-\frac{|y|^2}{4}}}{|x-y|^{N+s}}dy,
\end{align}
where the authors prove that, after rescaling by $(1-s)$, the functional $\mathcal{J}^\gamma_s(E;\Omega)$
approaches the Gaussian perimeter in the $\Gamma$-convergence sense as $s\to 1^-$.

In this paper we define the following fractional Gaussian perimeter 
\begin{equation}
\label{eq:fracgaussperimeter}
\begin{split}
P^\gamma_s(E;\Omega)&:=\int_{E\cap\Omega}d\gamma(x)\int_{E^c\cap\Omega}K_s(x,y)d\gamma(y) 
\\
&+\int_{E\cap\Omega}d\gamma(x)\int_{E^c\cap\Omega^c}K_s(x,y)d\gamma(y)
+\int_{E\cap\Omega^c}d\gamma(x)\int_{E^c\cap\Omega}K_s(x,y)d\gamma(y), 
\end{split}
\end{equation}
where $\gamma$ is the standard Gaussian measure in $\R^N$, whose definition will be recalled in the next section, and the kernel $K_s$ is defined in \eqref{defK_s}. 
The definition in \eqref{eq:fracgaussperimeter} is equivalent to the one given in \cite{CaCiLaPa} when 
$\Omega=\R^N$, it is analogous to \eqref{eq:euclideansperimeter}, in the sense that it depends on a 
fractional kernel $K_s$ defined in terms of an explicit heat kernel as in \eqref{eq:derivkernel} and 
it is not equivalent to \eqref{eq:derosalmanna}, see \eqref{eq:kernelquotient}. 

We notice that in the Gaussian setting no definition of fractional perimeter can satisfy the 
translation invariance property (vi) in the axiomatic treatment proposed in \cite[Pag. 29]{ChaMorPon}, 
as the Gaussian weight $\gamma$ is not translation invariant. 

Inspired by \cite{AmDeMa,DL}, the main result of this paper is the proof of the $\Gamma$-convergence 
of our (renormalised) fractional Gaussian perimeter to the Gaussian perimeter as $s\to 1^-$.

\begin{mtheorem}[\textbf{$\Gamma$-convergence}]
	\label{th:maintheorem}
	For every measurable set $E\subset\R^N$ we have 
	\begin{equation}
	\label{eq:gammaliminf}
	L^1_{\text{\rm loc}}-\Gamma-\liminf_{s\to 1^-}(1-s)P_s^{\gamma,L}(E;\Omega)\ge 
	\frac{\sqrt 2}{\pi} P^\gamma(E;\Omega)
	\end{equation}
	and
	\begin{equation}
	\label{eq:gammalimsup}
	L^1_{\text{\rm loc}}-\Gamma-\limsup_{s\to 1^-}(1-s)P_s^{\gamma}(E;\Omega)\le  
	\frac{\sqrt 2}{\pi} P^\gamma(E;\Omega).
	\end{equation}
\end{mtheorem}

We recall that \eqref{eq:gammaliminf} means that 
$$
\liminf_{n\to\infty}(1-s_n)P_{s_n}^{\gamma,L}(E_n;\Omega)\ge \frac{\sqrt{2}}{\pi}P^\gamma(E;\Omega)
$$ 
for any sequence $E_n$, $s_n$ such that $\chi_{E_n}\rightarrow\chi_E$ in $L^1_{\text{loc}}(\R^N)$ and 
$s_n\rightarrow 1^-$, while \eqref{eq:gammalimsup} means that for every measurable set $E$ and any 
sequence $s_n\rightarrow 1^-$, there exists a sequence $E_n$ with $\chi_{E_n}\rightarrow \chi_E$ in 
$L^1_{\text{loc}}(\R^N)$ such that 
$$
\limsup_{n\to\infty}(1-s_n)P_{s_n}^{\gamma}(E_n;\Omega)\le \frac{\sqrt{2}}{\pi}P^\gamma(E;\Omega).
$$
For an introduction to the $\Gamma$-convergence we refer to \cite{dalmaso}. We notice that the constant in front of the $\Gamma$-limit does not depend on the dimension, as usual in the Gaussian framework.

The paper is structured in the following way. In Section \ref{sec:notprel} we introduce the notation used 
in the paper and state some preliminary results. In particular, in Subsection \ref{ss:fraper} we give 
our definition of fractional Gaussian perimeter and we introduce a fractional Gaussian Sobolev space. In 
Subsections \ref{ss:estoncubs} and \ref{ss:gluing} we state and prove three crucial estimates which 
allow us to prove inequalities \eqref{eq:gammaliminf} and \eqref{eq:gammalimsup}. In Section 
\ref{sec:pfmainth} we prove Theorem \ref{th:maintheorem}; to prove \eqref{eq:gammaliminf} we use 
Lemma \ref{lem:loest} in order to exploit an idea that goes back to \cite{FonMul} and we reduce to 
proving an inequality on Radon-Nikodym derivatives, while for \eqref{eq:gammalimsup} we reduce 
to proving the claim for the energy-dense class of ``transversal'' polyhedra by using 
Lemma \ref{lem:upest}. In Section \ref{sec:convoflocmin}, as in \cite{AmDeMa, DL}, we prove that the 
$\Gamma$-convergence carries out the convergence of local minimisers to a local 
minimiser of the limit functional.

\section{Notation and preliminary results}\label{sec:notprel}
For $N\in\N$ we denote by $\gamma_N$ and $\mathcal{H}^{N-1}_\gamma$ the Gaussian measure on $\R^N$ and the 
$(N-1)$-Hausdorff Gaussian measure
\[
\gamma_N:=\frac{1}{(2\pi)^{N/2}}e^{-\frac{|\cdot|^2}{2}}\Leb{N},
\qquad 
\mathcal{H}^{N-1}_\gamma:=\frac{1}{(2\pi)^{(N-1)/2}}e^{-\frac{|\cdot|^2}{2}}\mathcal{H}^{N-1},
\]
where $\Leb{N}$ and $\mathcal{H}^{N-1}$ are the Lebesgue measure and the Euclidean $(N-1)$-dimensional 
Hausdorff measure, respectively. 
When $k\in \{1,\dots, N\}$, we denote by $\gamma_k$ the standard $k$-dimensional Gaussian measure; when there 
is no ambiguity we simply write $\gamma$ instead of $\gamma_N$ and, with an abuse of notation, we denote by 
$\gamma$ both the measure and its density with respect to $\Leb{N}$. 

The Gaussian perimeter of a measurable set $E$ in an open set $\Omega$ is defined as
$$
P^\gamma(E;\Omega)=\sqrt{2\pi}\sup\left\{\int_E\left(\text{div}\,\varphi-\varphi\cdot x\right)\:d\gamma(x):
\varphi\in C^\infty_c(\Omega;\R^N),\ \|\varphi\|_\infty\le 1\right\}.
$$
Moreover, if $E$ has finite Gaussian perimeter, then $E$ has locally finite Euclidean perimeter and it holds
$$
P^\gamma(E;\Omega)=\mathcal{H}^{N-1}_\gamma({\mathcal F} E\cap\Omega)=\frac{1}{(2\pi)^{\frac{(N-1)}{2}}}
\int_{{\mathcal F} E\cap\Omega}e^{-\frac{|x|^2}{2}}d\mathcal{H}^{N-1}(x),
$$
where ${\mathcal F} E$ is the reduced boundary of $E$. 
If $\Omega=\R^N$, we denote the Gaussian perimeter of $E$ in the whole $\R^N$ simply by $P^\gamma(E)$.
We refer to \cite{AFP} for the properties of 
sets with locally finite perimeter. Let us present an approximation result that will be useful in 
the proof of the $\Gamma-\limsup$ inequality. Its proof is analogous to that of \cite[Proposition 15]{AmDeMa}.

\begin{proposition}\label{pro:transversality}
Let $E\subset\R^N$ a set with $P^\gamma(E;\Omega)<\infty$. Then, for every $\varepsilon>0$, there exists a polyhedral set $\Pi\subset\R^N$ such that
\begin{itemize}
\item[(i)] $\gamma((E\triangle\Pi)\cap\Omega)<\varepsilon$;
\item[(ii)] $|P^\gamma(E;\Omega)-P^\gamma(\Pi;\Omega)|<\varepsilon$;
\item[(iii)] $P^\gamma(\Pi;\partial\Omega)=0$.
\end{itemize}
\end{proposition}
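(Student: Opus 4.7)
The plan is to combine the classical polyhedral approximation of sets of finite perimeter with a Reshetnyak-type continuity argument to pass from the Euclidean to the Gaussian perimeter, and then to secure the transversality condition (iii) by a small post-hoc translation with a generic direction.

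As a preliminary reduction, since $\gamma$ has exponential decay and $P^\gamma(E;\cdot)$ is a finite Radon measure on $\Omega$, I would first fix $R>0$ large enough that both $\gamma(E\setminus B_R)$ and $P^\gamma(E;\Omega\setminus\overline{B_R})$ are smaller than $\varepsilon/8$. Because $\gamma$ is smooth and bounded above and below by positive constants on $\overline{B_R}$, the set $E\cap B_R$ has finite Euclidean perimeter, and the classical polyhedral density theorem (\cite[Theorem~3.42]{AFP}) produces a sequence of bounded polyhedra $\tilde\Pi_n$ with reduced boundary contained in finitely many hyperplanes, satisfying $|(E\triangle\tilde\Pi_n)\cap B_R|\to 0$ and $|D\chi_{\tilde\Pi_n}|\weakto|D\chi_E|$ on $B_R$ with convergence of total variations (choosing $R$ outside the countable set where $\mathcal{H}^{N-1}(\mathcal{F}E\cap\partial B_R)>0$).

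I would then apply Reshetnyak's continuity theorem for the continuous bounded weight $g(x)=(2\pi)^{-(N-1)/2}e^{-|x|^2/2}$ on $\overline{B_R}$ to conclude $P^\gamma(\tilde\Pi_n;B_R)\to P^\gamma(E;B_R)$, and combined with the $L^1$-convergence and the tail estimate this yields a single polyhedron $\tilde\Pi:=\tilde\Pi_n$ (for $n$ large) satisfying (i) and (ii) up to $\varepsilon/2$. To secure (iii), I observe that $\mathcal{F}\tilde\Pi$ lies in a finite union of hyperplanes $H_1,\dots,H_k$; for each $H_i$ with unit normal $\nu_i$, the affine slices $(H_i+t\nu_i)\cap\partial\Omega$ are pairwise disjoint subsets of the $\sigma$-finite-$\mathcal{H}^{N-1}$ hypersurface $\partial\Omega$, so only countably many $t\in\R$ produce an $\mathcal{H}^{N-1}$-positive intersection. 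Hence the set of $v\in\R^N$ for which some translate $H_i+v$ meets $\partial\Omega$ in positive $\mathcal{H}^{N-1}$-measure is Lebesgue-negligible, and I would pick such an admissible $v$ of norm small enough that $\Pi:=\tilde\Pi+v$ still satisfies (i) and (ii) within the remaining $\varepsilon/2$ margin.

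The main technical obstacle is that $P^\gamma(\tilde\Pi+v;\Omega)$ need not be continuous at $v=0$ when $\tilde\Pi$ already carries a face on a flat portion of $\partial\Omega$, since moving that face across $\partial\Omega$ can change the perimeter on $\Omega$ discontinuously. This is circumvented by requiring the triangulations underlying the polyhedral approximation to have hyperplane directions in generic position with respect to the (at most countably many) tangent hyperplanes of the flat components of $\partial\Omega$; under this choice no face of $\tilde\Pi$ is coplanar with $\partial\Omega$, the map $v\mapsto P^\gamma(\tilde\Pi+v;\Omega)$ becomes continuous at $v=0$, and the translation step delivers a polyhedron $\Pi$ fulfilling (i), (ii) and (iii) simultaneously.
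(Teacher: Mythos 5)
Your overall scheme (tail cut at a large ball, strict polyhedral approximation, Reshetnyak-type continuity for the Gaussian weight, then a generic small translation to get (iii)) follows the same skeleton as the argument the paper points to (the adaptation of \cite{AmDeMa}, Proposition 15), and your countability/genericity argument for (iii) is correct. The genuine gap is in the step where you claim that strict convergence in $B_R$ plus Reshetnyak yields (ii). Property (ii) concerns the perimeters relative to the \emph{open} set $\Omega$, and weak$^*$ convergence of the weighted perimeter measures together with convergence of the total variations on $B_R$ gives convergence on $\Omega\cap B_R$ only if the limit measure does not charge $\partial\Omega$. This can fail: $E$ is only assumed to have finite Gaussian perimeter \emph{inside} $\Omega$, so after the (necessary) replacement of $E$ by $E\cap\Omega$ --- note that, as written, $E\cap B_R$ need not have finite perimeter at all, since nothing is assumed on $E$ outside $\Omega$ --- the essential boundary of the set you approximate may carry positive $\mathcal{H}^{N-1}$-mass on $\partial\Omega$. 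Concretely, take $\Omega$ a ball and $E=\Omega$: then $P^\gamma(E;\Omega)=0$, while any strict polyhedral approximation of $E\cap B_R=\Omega$ has weighted boundary mass close to $P^\gamma(\Omega)$ concentrated near $\partial\Omega$, and whenever that mass falls inside $\Omega$ one gets $P^\gamma(\Pi;\Omega)\approx P^\gamma(\Omega)>0$, violating (ii) by a fixed amount that no small translation can repair. Your identified ``main technical obstacle'' (faces coplanar with flat pieces of $\partial\Omega$) and the proposed genericity of face directions do not touch this phenomenon, which occurs also for curved $\partial\Omega$ and is about the \emph{side} of $\partial\Omega$ on which the approximating boundary lands, not about coplanarity.

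What is missing --- and this is exactly where the Lipschitz regularity of $\partial\Omega$ enters in the proof the paper refers to --- is a preliminary modification of $E$ that removes any possible perimeter charge on $\partial\Omega$ before the polyhedral approximation: for instance, extend $E\cap\Omega$ across the Lipschitz boundary by local reflections (the BV extension for Lipschitz domains, giving a set $F$ with $F\cap\Omega=E\cap\Omega$ and $|D\chi_F|(\partial\Omega)=0$), or push the boundary mass of $E$ strictly outside $\overline{\Omega}$ by a small outward bi-Lipschitz deformation built from the local cone directions of the Lipschitz graphs via a partition of unity, with perimeter distortion tending to $1$. Once the set being approximated has no perimeter on $\partial\Omega$, the strict polyhedral approximation does localize to $\Omega$ (your Reshetnyak step then gives (i)--(ii) with the Gaussian weight), the map $v\mapsto P^\gamma(\Pi+v;\Omega)$ is continuous at $v=0$ up to an error controlled by the small mass of $\partial\Pi$ near $\partial\Omega$, and your generic-translation argument delivers (iii) while preserving (i)--(ii). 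With this ingredient added (and with $E$ replaced by $E\cap\Omega$ at the outset, and \cite[Theorem 3.42]{AFP} replaced by a genuine polyhedral density statement), your proposal becomes a correct Gaussian adaptation of the intended argument; without it, the key inequality $P^\gamma(\Pi;\Omega)\le P^\gamma(E;\Omega)+\varepsilon$ is not established.
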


In the sequel, for $\Omega\subset\R^N$ open connected Lipschitz set and for $\delta>0$ we set
\begin{equation}
\begin{split}
\label{eq:omegadeltapiumeno}
&\Omega_\delta^+:=\left\{x\in\Omega^c:d(x,\Omega)<\delta\right\}, \\
&\Omega_\delta^-:=\left\{x\in\Omega:d(x,\Omega^c)<\delta\right\}.
\end{split}
\end{equation}

\subsection{Fractional Sobolev spaces and Fractional perimeters in the Gaussian setting}\label{ss:fraper}

In order to define the fractional perimeter, we introduce the Ornstein-Uhlenbeck 
semigroup, its generator $\Delta_\gamma$, the fractional powers of the generator and the functional setting.

\begin{definition}
	Let $t>0$ and $x\in\R^N$. For $u\in L^1_\gamma(\R^N)$ we define the Ornstein-Uhlenbeck semigroup as
	$$
	e^{t\Delta_\gamma}u(x):=\int_{\R^N}M_t(x,y)u(y)d\gamma(y)
	$$
	where $M_t(x,y)$ denotes the Mehler kernel
	$$
	M_t(x,y):=\frac{1}{(1-e^{-2t})^{N/2}}
	\exp\left(-\frac{e^{-2t}|x|^2-2e^{-t}x\cdot y+e^{-2t}|y|^2}{2(1-e^{-2t})}\right),
	$$
	which satisfies
	$$
	e^{t\Delta_\gamma}1=\int_{\R^N}M_t(x,y)d\gamma(y)=1,
	$$
	for any $t>0$ and any $x\in\R^N$.
	
	The generator of $e^{t\Delta_\gamma}$ acts on sufficiently smooth functions as
	\[
	\Delta_\gamma u = \Delta u - x \cdot Du
	\]
	and is called Ornstein-Uhlenbeck operator; see e.g. \cite{LunMetPal} and the references therein 
	for the main properties of $e^{t\Delta_\gamma}$ and $\Delta_\gamma$.
	
\begin{remark}
	We notice that if we write $x=(x',x_N), y=(y',y_N)\in\R^{N-1}\times\R$, we have 
	$$
	M_t(x,y)=M_t^{N-1}(x',y')M^1_t(x_N,y_N),
	$$
	for any $t>0$, where for $k\in\left\{1,\ldots,N\right\}$, $M^k_t(\cdot,\cdot)$ denotes the Mehler kernel
	 in $\R^k$. When there is no ambiguity we omit the superscript $k$.
\end{remark}

Since $-\Delta_\gamma$ is a positive definite and selfadjoint operator which generates a $C_0$-semigroup 
of contractions in $L^2_\gamma(\R^N)$, we can define its fractional powers by means of spectral 
decomposition via the Bochner subordination formula. 
In particular, for $s\in(0,1)$ and $x\in\R^N$ the fractional Ornstein-Uhlenbeck operator is defined as
	\begin{equation}
	\label{eq:bochnersubordination}
	\begin{split}
	(-\Delta_\gamma)^su(x):&=\frac{1}{\Gamma(-s)}\int_0^{\infty}\frac{e^{t\Delta_\gamma} u(x)-u(x)}
	{t^{s+1}}dt
	\\
	&=\frac{1}{\Gamma(-s)}\int_0^{\infty}\frac{dt}{t^{s+1}}\int_{\R^N}M_t(x,y)(u(y)-u(x))d\gamma(y) \\
	&=\frac{1}{|\Gamma(-s)|}\int_{\R^N}\left(u(x)-u(y)\right)K_{2s}(x,y)d\gamma(y),
	\end{split}
	\end{equation}
	where for $\sigma>0$ we have set 
	\begin{equation}\label{defK_s}
	K_\sigma(x,y):=\int_0^{\infty}\frac{M_t(x,y)}{t^{\frac \sigma2+1}}\, dt,
	\end{equation}
and the right-hand side in \eqref{eq:bochnersubordination} has to be intended in the Cauchy principal 
value sense. 
\end{definition}

The definition of the kernel $K_\sigma$ suggests the following definition of fractional Gaussian 
Sobolev spaces

\begin{definition}
	\label{def:gaussfracsobspace}
	Let $\Omega\subseteq\R^N$ be an open set, $s\in(0,1)$ and $1\le p<\infty$. We define the fractional 
	Gaussian Sobolev space $W^{s,p}_\gamma(\Omega)$ as
	$$
	W^{s,p}_\gamma(\Omega):=\left\{u\in L^p_\gamma(\Omega);\ [u]_{W^{s,p}_\gamma(\Omega)}<\infty\right\},
	$$
	where
	$$
	[u]_{W^{s,p}_\gamma(\Omega)}:=\left(\int_\Omega d\gamma(x)\int_\Omega|u(x)-u(y)|^p
	K_{sp}(x,y)d\gamma(y)\right)^{1/p},
	$$
	and $K_{sp}$ is deined in \eqref{defK_s} with $\sigma=sp$. 
\end{definition}

\begin{remark}
	The integrability of the function 
	$$
	(0,\infty)\ni t\mapsto \frac{M_t(x,y)}{t^{\frac{sp}{2}+1}}
	$$
	near zero, for any $x,y\in\R^N$, $x\neq y$, is ensured by the fact that
	\begin{equation}
	\label{eq:kernelquotient}
	\lim_{t\to 0^+}\frac{M_t(x,y)}{H_t(|x-y|)}=(2\pi)^{N/2}e^{\frac{|x|^2}{4}}e^{\frac{|y|^2}{4}}
	\quad\text{for any}\quad x,y\in\R^N,
	\end{equation}
	where $H_t(\cdot)$ is the Gauss-Weierstrass kernel defined in \eqref{GWkernel}. It is easily 
	seen that the equality in formula \eqref{eq:kernelquotient} it is not true for any $t>0$.
\end{remark}

Now, we make more precise the definition of fractional Gaussian perimeter 
\eqref{eq:fracgaussperimeter} given in Section \ref{sec:intro}.

\begin{definition}
	Let $\Omega\subset\R^N$ be a connected open set with Lipschitz boundary, and $E\subset\R^N$ a 
	measurable set. We define the Gaussian $s$-perimeter of $E$ in $\Omega$ as
	$$
	P^\gamma_s(E;\Omega):=P^{\gamma,L}_s(E;\Omega)+P^{\gamma,NL}_s(E;\Omega),
	$$
	where the {\em local part} is 
	\begin{equation}
	P^{\gamma,L}_s(E;\Omega):=\int_{E\cap\Omega}d\gamma(x)\int_{E^c\cap\Omega}K_s(x,y)d\gamma(y),
	\end{equation}
	and the {\em nonlocal part} is 
	\begin{equation}\label{defNonlocal}
	P^{\gamma,NL}_s(E;\Omega):=\int_{E\cap\Omega}d\gamma(x)\int_{E^c\cap\Omega^c}K_s(x,y)d\gamma(y)+
	\int_{E\cap\Omega^c}d\gamma(x)\int_{E^c\cap\Omega}K_s(x,y)d\gamma(y).
	\end{equation}
	As for the Gaussian perimeter we omit the second argument in $P^\gamma_s(E;\Omega)$ if $\Omega=\R^N$.
\end{definition}

\begin{remark}
	We notice that, since $K_s(y,x)=K_s(x,y)$ for every $s\in(0,1)$ and $x,y\in\R^N$, we have  $P^\gamma_s(E^c;\Omega)=P^\gamma_s(E;\Omega)$.
\end{remark}

As already observed in Section \ref{sec:intro} the definition in \eqref{eq:fracgaussperimeter} is equivalent to the one given in \cite{CaCiLaPa, NovPalSir} thanks to the following integration by parts formula
$$
\frac 12[u]^2_{H^s_\gamma(\R^N)}=\int_{\R^N}u(-\Delta_\gamma)^su\:d\gamma.
$$
Indeed
\begin{equation*}
\begin{split}
\frac 12[u]^2_{H^s_\gamma(\R^N)}=&\frac 12\int_{\R^N}d\gamma(x)\int_{\R^N}|u(x)-u(y)|^2K_{2s}(x,y)d\gamma(y) \\
=&\frac 12\left(\int_{\R^N}u(x)d\gamma(x)\int_{\R^N}(u(x)-u(y))K_{2s}(x,y)d\gamma(y)\right. \\
&-\left.\int_{\R^N}d\gamma(x)\int_{\R^N}u(y)(u(x)-u(y))K_{2s}(x,y)d\gamma(y)\right)\\
=&\int_{\R^N}u(x)\int_{\R^N}(u(x)-u(y))K_{2s}(x,y)d\gamma(y)=\int_{\R^N}u(x)(-\Delta_\gamma)^s u(x) d\gamma(x)
\end{split}
\end{equation*}
where in the third equality we switched $x$ and $y$ and used the symmetry of the kernel. If $u=\chi_E$ for some measurable set $E$ we have
$$
P_s^\gamma(E)=\int_E(-\Delta_\gamma)^{s/2}\chi_E d\gamma.
$$

Another useful inequality which involves \eqref{eq:fracgaussperimeter} is the fractional Gaussian isoperimetric inequality in its analytic form, which reads 
\begin{equation}
\label{eq:gaussisopine}
P_s^\gamma(E) \ge I_s(\gamma(E)),
\end{equation}
where $I_s:(0,1)\rightarrow (0,\infty)$ denotes the fractional Gaussian isoperimetric function, i.e., 
the function that associates to $m\in (0,1)$ the fractional Gaussian perimeter of a halfspace having 
Gaussian measure $m$, and in \eqref{eq:gaussisopine} equality holds if and only if $E$ is a halfspace 
(see \cite{NovPalSir}).

The kernel $K_s$ satisfies the following estimate. 

\begin{lemma}
	For any $x,y\in\R^N$ and for any $s\in(0,1)$ we have
	\begin{equation}
	\label{eq:fractkernellowerbound}
	K_s(x,y)\ge\frac{C_{N,s}}{|x-y|^{N+s}},
	\end{equation}
	where $C_{N,s}:=2^{s+\frac N2}\Gamma\left(\frac{s+N}{2}\right).$
\end{lemma}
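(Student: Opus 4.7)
The plan is to reduce the claim to the Euclidean Bochner subordination formula \eqref{eq:derivkernel} by establishing the pointwise lower bound
$$M_t(x,y)\ \ge\ (2\pi)^{N/2}\, H_t(|x-y|)\qquad\text{for all } t>0,\ x,y\in\R^N.$$
Once this is in hand, multiplying through by the positive weight $t^{-s/2-1}$ and integrating over $(0,\infty)$, then invoking \eqref{eq:derivkernel} and the definition \eqref{defK_s}, immediately gives
$$K_s(x,y)\ \ge\ (2\pi)^{N/2}\cdot\frac{2^s\pi^{-N/2}\,\Gamma\!\left(\tfrac{N+s}{2}\right)}{|x-y|^{N+s}}\ =\ \frac{2^{s+N/2}\,\Gamma\!\left(\tfrac{N+s}{2}\right)}{|x-y|^{N+s}},$$
which is the claimed inequality with the stated constant $C_{N,s}$.

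The pointwise estimate on $M_t$ is purely algebraic. First I would rewrite the quadratic form in the Mehler exponent using the identity
$$e^{-2t}(|x|^2+|y|^2)-2e^{-t}x\cdot y\ =\ e^{-t}|x-y|^2-e^{-t}(1-e^{-t})(|x|^2+|y|^2),$$
which, together with the factorisation $1-e^{-2t}=(1-e^{-t})(1+e^{-t})$, recasts the kernel in the form
$$M_t(x,y)\ =\ \frac{1}{(1-e^{-2t})^{N/2}}\exp\!\left(-\frac{e^{-t}|x-y|^2}{2(1-e^{-2t})}+\frac{e^{-t}(|x|^2+|y|^2)}{2(1+e^{-t})}\right).$$
The second summand in the exponent is manifestly non-negative, so dropping it only weakens the bound. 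It then remains to compare the resulting truncated expression with $(2\pi)^{N/2}H_t(|x-y|)=(2t)^{-N/2}\exp(-|x-y|^2/(4t))$, which follows at once from two elementary one-variable inequalities: $1-e^{-2t}\le 2t$ (controlling the prefactor) and $2te^{-t}\le 1-e^{-2t}$ (controlling the exponent). The latter reduces to the standard convex estimate $e^{-t}-1+t\ge 0$ via a single differentiation.

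I do not expect any real obstacle. The only slightly delicate point is spotting the algebraic rearrangement that exposes $|x-y|^2$ in the Mehler exponent together with a non-negative remainder; once that identity is in hand, the comparison with the Gauss--Weierstrass kernel is a short calculus check, and the conclusion follows at once from the Euclidean subordination formula.
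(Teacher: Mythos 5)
Your proposal is correct and follows essentially the same route as the paper: the paper also proves the pointwise bound $M_t(x,y)\ge(2\pi)^{N/2}H_t(|x-y|)$ (using $e^{-2t}\le e^{-t}$, which is exactly your dropping of the non-negative remainder, together with $1-e^{-2t}\le 2t$ and $\tfrac{e^{-t}}{2(1-e^{-2t})}\le\tfrac{1}{4t}$) and then divides by $t^{\frac s2+1}$ and integrates, invoking \eqref{eq:derivkernel}. Your algebraic rewriting of the Mehler exponent and the elementary inequalities are accurate, and the constant bookkeeping matches $C_{N,s}=2^{s+\frac N2}\Gamma\bigl(\tfrac{s+N}{2}\bigr)$.
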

\begin{proof}
	For any $x,y\in\R^N$ we have
	\begin{align}\label{eq:heatkernellowerbound}
	M_t(x,y)&=\frac{1}{(1-e^{-2t})^{N/2}}\exp\left(-\frac{e^{-2t}|x|^2-2e^{-t}x\cdot y
		+e^{-2t}|y|^2}{2(1-e^{-2t})}\right)
	\\ \nonumber
	&\ge \frac{1}{(2t)^{N/2}}\exp\left(-\frac{e^{-t}|x-y|^2}{2(1-e^{-2t})}\right) 
	\ge \frac{1}{(2t)^{N/2}}\exp\left(-\frac{|x-y|^2}{4t}\right)=(2\pi)^{N/2}H_t(|x-y|),
	\end{align}
	where in the first inequality we used the fact that $e^{-2t}\le e^{-t}$ and $1-e^{-2t}\le 2t$ for any $t\ge 0$, while in the second we used that $\frac{e^{-t}}{2(1-e^{-2t})}\le\frac{1}{4t}$ for any $t>0$. By dividing both sides of \eqref{eq:heatkernellowerbound} by $t^{\frac s2+1}$ and integrating with respect $t$ in $(0,\infty)$ we get the thesis.
\end{proof} 

\begin{lemma}
	\label{lem:upasest} 
	For any $x,y\in\R^N$ and for any $s\in(0,1)$, the following estimate
	\begin{equation}
	\label{eq:upperadialestimate}
	K_s(x,y)\le e^{\frac{|x|^2}{4}}e^{\frac{|y|^2}{4}}\tilde{K}_s(|x-y|)
	\end{equation}
	holds true, where, for any $z\in\R^N$ we have defined the decreasing kernel
	$$
	\tilde{K}_s(r):=\int_0^{\infty} \exp\left(-\frac{e^tr^2}{2(e^{2t}-1)}\right)
	\frac{dt}{t^{\frac s2+1}(1-e^{-2t})^{N/2}},\qquad r\geq 0.
	$$
	Moreover, for any $a>0$ there exists $R_a>0$ such that for any $s\in(0,1)$ the kernel $\tilde{K}_s$ satisfies the summability condition
	\begin{equation}
	\label{eq:kernelsummability}
	\tilde{K}_s(|x|)\in L^1(B_{R_a},|\cdot|)\cap L^1(B_{R_a}^c,e^{-a|\cdot|^2}).
	\end{equation}
\end{lemma}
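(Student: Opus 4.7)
The plan is to split the statement into a pointwise bound on the Mehler kernel (which gives \eqref{eq:upperadialestimate} after integrating against $t^{-s/2-1}\,dt$), followed by a Fubini-based analysis of the resulting radial kernel $\tilde K_s$ near the origin and at infinity.

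First I would prove the pointwise estimate at the level of the Mehler kernel, namely
\[
M_t(x,y)\le\frac{e^{|x|^2/4}\,e^{|y|^2/4}}{(1-e^{-2t})^{N/2}}\,\exp\!\left(-\frac{e^t|x-y|^2}{2(e^{2t}-1)}\right)
\]
for every $t>0$ and $x,y\in\R^N$. Using the identity $\tfrac{e^t}{e^{2t}-1}=\tfrac{e^{-t}}{1-e^{-2t}}$, the required comparison of exponents reduces to
\[
\frac{e^{-t}|x-y|^2 - e^{-2t}(|x|^2+|y|^2) + 2e^{-t}\,x\cdot y}{2(1-e^{-2t})}\le \frac{|x|^2+|y|^2}{4}.
\]
Expanding $e^{-t}|x-y|^2$, the numerator on the left collapses to $e^{-t}(1-e^{-t})(|x|^2+|y|^2)$, and factoring $1-e^{-2t}=(1-e^{-t})(1+e^{-t})$ turns the claim into $\tfrac{e^{-t}}{2(1+e^{-t})}\le \tfrac{1}{4}$, i.e.\ $e^{-t}\le 1$. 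Dividing by $t^{s/2+1}$ and integrating in $t\in(0,\infty)$ then yields \eqref{eq:upperadialestimate}, while the pointwise monotonicity in $r$ of the integrand defining $\tilde K_s$ shows that $\tilde K_s$ is decreasing.

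For the local summability I would use Fubini together with $\tfrac{e^t}{2(e^{2t}-1)}=\tfrac{1}{4\sinh t}$ to rewrite, up to a dimensional constant $c_N$,
\[
\int_{B_{R_a}}\!\tilde K_s(|x|)\,|x|\,dx = c_N\int_0^\infty\!\!\left(\int_0^{R_a}\!r^N e^{-r^2/(4\sinh t)}\,dr\right)\!\frac{dt}{t^{s/2+1}(1-e^{-2t})^{N/2}}.
\]
For $t\ge 1$ the inner integral is dominated by $R_a^{N+1}/(N+1)$, so the outer contribution is controlled by $\int_1^\infty t^{-s/2-1}\,dt<\infty$ for any fixed $s>0$. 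For $t\in(0,1)$ I would extend the upper limit to $+\infty$ and, via the substitution $u=r^2/(4\sinh t)$, bound the inner integral by a constant times $(\sinh t)^{(N+1)/2}$; since $1-e^{-2t}\simeq 2t$ and $\sinh t\simeq t$ near $0$, the resulting integrand is of order $t^{-s/2-1/2}$, which is integrable at the origin precisely because $s<1$.

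The far-field integrability is then immediate from monotonicity: any $R_a>0$ is admissible, because $\tilde K_s(R_a)<\infty$ by direct inspection (the exponential $e^{-R_a^2/(4\sinh t)}$ kills the small-$t$ singularity, while $t^{-s/2-1}$ is integrable at $+\infty$), so
\[
\int_{B_{R_a}^c}\!\tilde K_s(|x|)\,e^{-a|x|^2}\,dx\le \tilde K_s(R_a)\int_{\R^N}e^{-a|x|^2}\,dx<\infty.
\]
The main conceptual step is spotting the algebraic identity that extracts the factor $e^{|x|^2/4}e^{|y|^2/4}$ from the Mehler exponent while leaving a clean Gaussian in $|x-y|$; once this is in place, the summability of $\tilde K_s$ is a routine Mellin-type asymptotic analysis.
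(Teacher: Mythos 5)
Your proposal is correct, and for the pointwise bound and the local summability it follows essentially the paper's own route: the paper factors the Mehler kernel as $M_t(x,y)=(1-e^{-2t})^{-N/2}\exp\bigl(-\tfrac{e^t|x-y|^2}{2(e^{2t}-1)}\bigr)\exp\bigl(\tfrac{(e^t-1)(|x|^2+|y|^2)}{2(e^{2t}-1)}\bigr)$ and uses $\tfrac{e^t-1}{2(e^{2t}-1)}=\tfrac{e^{-t}}{2(1+e^{-t})}\le\tfrac14$, which is exactly the exponent comparison you carry out, and its estimate of $\int_{B_R}|x|\tilde K_s(|x|)\,dx$ uses the same split $t\in(1,\infty)$ versus $t\in(0,1)$ with the same Gaussian substitution, arriving at the same $t^{-(s+1)/2}$ behaviour near $0$ (do turn your ``$\simeq$'' statements into one-sided inequalities on $(0,1]$, e.g.\ $1-e^{-2t}\ge(1-e^{-2})t$ and $\sinh t\le 2t$, but this is routine). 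Where you genuinely diverge is the far-field claim: the paper estimates $\int_{B_R^c}\tilde K_s(|x|)e^{-a|x|^2}dx$ directly by splitting the $t$-integral and absorbing the Gaussian weight into the kernel, which forces a specific admissible radius ($R_a=\log(2a)$ for $a>\tfrac12$, $R_a=1$ otherwise) and yields an explicit bound; you instead observe that $\tilde K_s(r)<\infty$ for every fixed $r>0$ and that $\tilde K_s$ is radially decreasing, so $\tilde K_s(|x|)\le\tilde K_s(R_a)$ on $B_{R_a}^c$ and any $R_a>0$ works. Your argument is shorter and proves a formally stronger existence statement, and it suffices here because the lemma only asserts membership in the weighted $L^1$ spaces for each $s\in(0,1)$, with no uniformity in $s$; the paper's more laborious computation has the side benefit of producing the explicit, $s$-tracked constants that are recycled later (e.g.\ in the gluing Proposition \ref{prop:gluing} and in Proposition \ref{prop:analogpropsedici}), but that is beyond what the statement of Lemma \ref{lem:upasest} requires.
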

\begin{proof}
	The estimate in \eqref{eq:upperadialestimate} simply follows by noticing that
	$$
	M_t(x,y)=\frac{1}{(1-e^{-2t})^{N/2}}
	\exp\left(-\frac{e^t|x-y|^2}{2(e^{2t}-1)}\right)
	\exp\left(\frac{(e^t-1)(|x|^2+|y|^2)}{2(e^{2t}-1)}\right)
	$$
	and
	$$
	\frac{e^t-1}{2(e^{2t}-1)}\le\frac 14
	$$
	for any $t>0$. For every $R>0$ we have 
	\begin{equation}
	\begin{split}
	J_1:&=\int_{B_{R}}|x|dx\int_1^{\infty}\frac{\exp\left(-\frac{e^t|x|^2}{2(e^{2t}-1)}\right)}
	{t^{\frac s2+1}(1-e^{-2t})^{N/2}}dt 
	\\
	&=N\omega_N\int_0^{R}\rho^Nd\rho\int_1^{\infty}\frac{\exp\left(-\frac{e^t\rho^2}{2(e^{2t}-1)}\right)}
	{t^{\frac s2+1}(1-e^{-2t})^{N/2}}dt
	\le N\omega_N\frac{R^{N+1}}{N+1}\frac{2}{s(1-e^{-2})}
	\end{split}
	\end{equation}
	and
	\begin{equation}
	\begin{split}
	J_2:&=\int_{B_{R}}|x|dx\int_0^1\frac{\exp\left(-\frac{e^t|x|^2}{2(e^{2t}-1)}\right)}
	{t^{\frac s2+1}(1-e^{-2t})^{N/2}}dt 
	\\
	&=N\omega_N\int_0^{R}\rho^N d\rho\int_0^1\frac{\exp\left(-\frac{e^t\rho^2}{2(e^{2t}-1)}\right)}
	{t^{\frac s2+1}(1-e^{-2t})^{N/2}}dt 
	\\
	&\le N\omega_N\int_0^1\frac{dt}{t^{\frac s2+1}(1-e^{-2t})^{N/2}}
	\int_0^{\infty}\rho^N\exp\left(-\frac{e^t\rho^2}{2(e^{2t}-1)}\right)d\rho 
	\\
	&=2^{\frac{N}{2}}N\omega_N\Gamma\left(\frac{N+1}{2}\right)
	\int_0^1\frac{(e^{2t}-1)^{1/2}e^{\frac{N-1}{2}t}}{t^{\frac s2+1}}dt 
	\\
	&\le 2^{\frac{N+1}{2}}N\omega_N\Gamma\left(\frac{N+1}{2}\right)e^{\frac{N+1}{2}}
	\int_0^1 t^{\frac{1-s}{2}-1}dt
	\\
	&=2^{\frac{N+3}{2}}N\omega_N\Gamma\left(\frac{N+1}{2}\right)\frac{e^{\frac{N+1}{2}}}{1-s},
	\end{split}
	\end{equation}
	where in the second equality in the right-hand side we performed the change of variable
	$$
	w:=\frac{e^t\rho^2}{2(e^{2t}-1)},
	$$
	and in the second inequality we used that for any $t\in[0,1]$
	$$
	e^{2t}-1\le 2e^2t.
	$$
	Therefore
	$$
	J_1+J_2=\int_{B_{R}}|x|\tilde{K}_s(|x|)dx\le 
	N\omega_N\left(\frac{R^{N+1}}{N+1}\frac{2}{s(1-e^{-2})}
	+2^{\frac{N+3}{2}}\Gamma\left(\frac{N+1}{2}\right)\frac{e^{\frac{N+1}{2}}}{1-s}\right)<\infty,
	$$
	for any $R>0$ and $s\in(0,1)$.
	
	Fix now $a>0$ and let $R>0$. We have
	\begin{equation}
	\label{eq:integrabilityatinfinity}
	\begin{split}
	K_1:&=\int_{B^c_{R}}e^{-a|x|^2}dx\int_{R}^{\infty}\frac{\exp\left(-\frac{e^t|x|^2}{2(e^{2t}-1)}\right)}
	{t^{\frac s2+1}(1-e^{-2t})^{N/2}}dt 
	\\ 
	&=N\omega_N\int_{R}^{\infty}\rho^{N-1}e^{-a\rho^2}d\rho
	\int_{R}^{\infty}\frac{\exp\left(-\frac{e^t\rho^2}{2(e^{2t}-1)}\right)}
	{t^{\frac s2+1}(1-e^{-2t})^{N/2}}dt 
	\\
	&\le \frac{N\omega_N}{(1-e^{-2R})^{N/2}}\int_{R}^{\infty}\frac{dt}{t^{\frac s2+1}}
	\int_0^{\infty}\rho^{N-1}e^{-a\rho^2}d\rho 
	\\
	&=\frac{2\pi^{N/2}}{(1-e^{-2R})^{N/2}}\frac{R^{-\frac s2}}{s\sqrt{a}},
	\end{split}
	\end{equation}
	and
	\begin{equation}
	\label{eq:estimatelocal}
	\begin{split}
	K_2:&=\int_{B^c_{R}}e^{-a|x|^2}dx\int_0^{R}\frac{\exp\left(-\frac{e^t|x|^2}{2(e^{2t}-1)}\right)}
	{t^{\frac s2+1}(1-e^{-2t})^{N/2}}dt 
	\\
	&=N\omega_N\int_{R}^{\infty}\rho^{N-1}e^{-a\rho^2}d\rho
	\int_0^{R}\frac{\exp\left(-\frac{e^t\rho^2}{2(e^{2t}-1)}\right)}
	{t^{\frac s2+1}(1-e^{-2t})^{N/2}}dt 
	\\
	&\le N\omega_N\int_{R}^{\infty}\frac{dt}{t^{\frac s2+1}(1-e^{-2t})^{N/2}}
	\int_0^{\infty}\rho^{N-1}\exp\left(-\frac{e^t\rho^2}{2(e^{2t}-1)}\right)e^{-a\rho^2}d\rho 
	\\
	&\le N\omega_N\int_{R}^{\infty}\frac{dt}{t^{\frac s2+1}(1-e^{-2t})^{N/2}}
	\int_0^{\infty}\rho^{N-1}\exp\left(-\frac{e^t+2a(e^{2t}-1)}{2(e^{2t}-1)}\rho^2\right)d\rho 
	\\
	&=2^{\frac N2-1}N\omega_N\int_{R}^{\infty}\left(\frac{e^{2t}-1}{e^t+2a(e^{2t}-1)}\right)^{N/2}
	\frac{dt}{t^{\frac s2+1}(1-e^{-2t})^{N/2}}\int_0^{\infty}w^{\frac N2-1}e^{-w}dw 
	\\
	&=2^{\frac N2-1}N\omega_N\Gamma\left(\frac N2\right)
	\int_{R}^{\infty}\left(\frac{e^{2t}}{2ae^{2t}+e^t-2a}\right)^{N/2}\frac{dt}{t^{\frac s2+1}}, 
	\end{split}
	\end{equation}
	where in the second equality we performed the change of variable
	$$
	w:=\frac{e^t+2a(e^{2t}-1)}{2(e^{2t}-1)}\rho^2.
	$$
	To conclude, we notice that
	\begin{equation}
	\label{eq:logestimate}
	0<\frac{e^{2t}}{2ae^{2t}+e^t-2a}\le\frac{1}{2a}\quad
	\text{for any}\quad t\ge\log(2a)\quad\text{if}\quad a>\frac 12,
	\end{equation}
	and so in order to have $K_2<\infty$ we can choose $R=R_a:=\log(2a)$. On the other side the 
	inequality in \eqref{eq:logestimate} is true for any $t>0$ if $a\in\left(0,\frac 12\right]$ (and we 
	can choose $R_a:=1$ for simplicity).
	
	Now, putting together \eqref{eq:integrabilityatinfinity}, \eqref{eq:estimatelocal} and \eqref{eq:logestimate}, with this choice of $R_a$ we obtain 
	$$
	K_1+K_2=\int_{B^c_{R_a}}\tilde{K}_s(|x|)e^{-a|x|^2}dx\le
	\frac{4\pi^{N/2}}{s}R_a^{-\frac s2}\left(\frac{1}{2\sqrt{a}(1-e^{-2R_a})^{N/2}}
	+\frac{1}{2a^{N/2}}\right)<\infty 
	$$
	for any $a>0$ and $s\in(0,1)$.
\end{proof}

In the Gaussian framework, in analogy with the Euclidean one, we have a Coarea formula.
\begin{lemma}[Coarea formula]
	\label{lem:coarea}
	For every measurable function $u:\Omega\rightarrow [0,1]$ it holds that
	$$
	\frac 12[u]_{W^{s,1}_\gamma(\Omega)}=\int_0^1 P^{\gamma,L}_s(\{u>t\};\Omega)dt
	$$
\end{lemma}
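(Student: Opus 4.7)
The plan is to reduce the identity to the classical layer-cake representation of $|u(x)-u(y)|$ and then apply Tonelli's theorem. Since $u$ takes values in $[0,1]$, for every $x,y\in\Omega$ the pointwise identity
\begin{equation*}
|u(x)-u(y)|=\int_0^1\bigl|\chi_{\{u>t\}}(x)-\chi_{\{u>t\}}(y)\bigr|\,dt
\end{equation*}
holds; indeed, the indicator $|\chi_{\{u>t\}}(x)-\chi_{\{u>t\}}(y)|$ equals $1$ precisely when $\min(u(x),u(y))\le t<\max(u(x),u(y))$, and integrating in $t$ gives $|u(x)-u(y)|$. This is the only place where the hypothesis $u\in[0,1]$ is used.

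Next, I would insert this identity into the definition of the seminorm and swap the order of integration. Because $K_s\ge0$ and every term under the integral sign is non-negative, Tonelli's theorem applies with no fuss, yielding
\begin{equation*}
[u]_{W^{s,1}_\gamma(\Omega)}=\int_0^1 dt\int_\Omega d\gamma(x)\int_\Omega \bigl|\chi_{E_t}(x)-\chi_{E_t}(y)\bigr|\,K_s(x,y)\,d\gamma(y),
\end{equation*}
where $E_t:=\{u>t\}$.

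Finally, I would recognise the inner double integral as twice the local perimeter. The function $|\chi_{E_t}(x)-\chi_{E_t}(y)|$ equals $1$ exactly when $(x,y)\in (E_t\cap\Omega)\times(E_t^c\cap\Omega)$ or $(x,y)\in(E_t^c\cap\Omega)\times(E_t\cap\Omega)$, and vanishes otherwise; since $K_s$ is symmetric, the two contributions coincide and give
\begin{equation*}
\int_\Omega d\gamma(x)\int_\Omega \bigl|\chi_{E_t}(x)-\chi_{E_t}(y)\bigr|\,K_s(x,y)\,d\gamma(y)=2P^{\gamma,L}_s(E_t;\Omega).
\end{equation*}
Dividing by $2$ yields the claim. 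There is no genuine obstacle here: the only nontrivial ingredient is the pointwise layer-cake identity, and the use of Tonelli is unconditional since the integrand is non-negative (so even the possibility $[u]_{W^{s,1}_\gamma(\Omega)}=+\infty$ is handled automatically, both sides being equal in $[0,+\infty]$).
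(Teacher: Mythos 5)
Your proposal is correct and follows essentially the same route as the paper's proof: the layer-cake identity $|u(x)-u(y)|=\int_0^1|\chi_{\{u>t\}}(x)-\chi_{\{u>t\}}(y)|\,dt$, exchange of the order of integration by Tonelli (legitimate since everything is non-negative), and identification of the inner double integral with $2P^{\gamma,L}_s(\{u>t\};\Omega)$ via the symmetry of $K_s$. The paper phrases the last step by writing $|\chi_{\{u>t\}}(x)-\chi_{\{u>t\}}(y)|=\chi_{\{u>t\}}(x)\chi_{\Omega\setminus\{u>t\}}(y)+\chi_{\{u>t\}}(y)\chi_{\Omega\setminus\{u>t\}}(x)$, which is exactly your observation in a slightly different guise, so there is nothing to add.
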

\begin{proof}
	Given $x,y\in\Omega$, the function $[0,1]\ni t\mapsto\chi_{\{u>t\}}(x)-\chi_{\{u>t\}}(y)$ takes 
	values in $\{-1,0,1\}$ and it is nonzero in the interval $(\min\{u(x),u(y)\},\max\{u(x),u(y)\})$. Therefore
	$$
	|u(x)-u(y)|=\int_0^1|\chi_{\{u>t\}}(x)-\chi_{\{u>t\}}(y)|dt
	$$
	and
	\begin{align*}
	|\chi_{\{u>t\}}(x)-\chi_{\{u>t\}}(y)|&=
	|\chi_{\{u>t\}}(x)-\chi_{\{u>t\}}(y)|^2
	\\
	&=\chi_{\{u>t\}}(x)\chi_{\Omega\setminus\{u>t\}}(y)+\chi_{\{u>t\}}(y)\chi_{\Omega\setminus\{u>t\}}(x).
	\end{align*}	
	Substituting we obtain
	\begin{equation*}
	\begin{split}
	[u]_{W^{s,1}_\gamma(\Omega)}&=\int_\Omega d\gamma(x)\int_\Omega K_s(x,y)d\gamma(y) 
	\int_0^1|\chi_{\{u>t\}}(x)-\chi_{\{u>t\}}(y)|^2dt 
	\\
	&=2\int_0^1 dt\int_{\{u>t\}}d\gamma(x)\int_{\Omega\setminus\{u>t\}}K_s(x,y)d\gamma(y)
	\\
	&=2\int_0^1P^{\gamma,L}_s(\{u>t\};\Omega)dt.
	\end{split}
	\end{equation*}
\end{proof}

\begin{corollary}
	For $E\subset\R^N$ measurable set, $s\in(0,1)$ and $1\le p<\infty$ we have 
	$$
	P^{\gamma,L}_s(E;\Omega)=\frac 12[\chi_E]_{W^{s,1}_\gamma(\Omega)}=
	\frac 12[\chi_E]^p_{W^{\frac sp,p}_\gamma(\Omega)}.
	$$
\end{corollary}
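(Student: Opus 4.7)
The statement is essentially an immediate corollary of the coarea formula in Lemma \ref{lem:coarea} combined with the elementary observation that a $\{0,1\}$-valued function satisfies $|u(x)-u(y)|^p = |u(x)-u(y)|$ for every $p \ge 1$.

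The plan is to establish the two equalities separately. For the first one, I apply Lemma \ref{lem:coarea} with the choice $u = \chi_E$. Since $\chi_E$ takes only the values $0$ and $1$, for every $t \in (0,1)$ we have $\{\chi_E > t\} = E$, so the integral $\int_0^1 P^{\gamma,L}_s(\{\chi_E>t\};\Omega)\,dt$ collapses to $P^{\gamma,L}_s(E;\Omega)$. The coarea formula therefore yields $\frac{1}{2}[\chi_E]_{W^{s,1}_\gamma(\Omega)} = P^{\gamma,L}_s(E;\Omega)$.

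For the second equality, I exploit the fact that $|\chi_E(x) - \chi_E(y)| \in \{0,1\}$, hence
\[
|\chi_E(x)-\chi_E(y)|^p = |\chi_E(x)-\chi_E(y)|
\]
for every $p\ge 1$. Inserting this into Definition \ref{def:gaussfracsobspace} with fractional index $s/p$ gives
\[
[\chi_E]^p_{W^{\frac{s}{p},p}_\gamma(\Omega)} = \int_\Omega d\gamma(x)\int_\Omega |\chi_E(x)-\chi_E(y)|\,K_{(s/p)\cdot p}(x,y)\,d\gamma(y),
\]
and since $(s/p)\cdot p = s$, the right-hand side is exactly $[\chi_E]_{W^{s,1}_\gamma(\Omega)}$. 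Combined with the first equality, this closes the chain.

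There is really no significant obstacle: the result is a bookkeeping consequence of the coarea identity already proved. The only point worth stressing is the compatibility of the kernel indices: the definition of $W^{s',p}_\gamma$ involves $K_{s'p}$, so the rescaling $s' = s/p$ is precisely what is needed to recover the same kernel $K_s$ appearing in the local fractional perimeter, and this is what makes the identity independent of $p$.
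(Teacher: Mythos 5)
Your proof is correct and follows essentially the same route as the paper: apply the coarea formula of Lemma \ref{lem:coarea} to $u=\chi_E$ for the first equality, and use that $|\chi_E(x)-\chi_E(y)|^p=|\chi_E(x)-\chi_E(y)|$ together with $K_{\frac{s}{p}p}=K_s$ for the second. No gaps.
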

\begin{proof}
	For the first equality it is sufficient to apply Coarea formula \ref{lem:coarea} to $u=\chi_E$. 
	The second equality follows from the same computations by noticing that $K_{\frac spp}(x,y)=K_s(x,y)$ 
	and that $|\chi_E(x)-\chi_E(y)|^p=|\chi_E(x)-\chi_E(y)|^2$ for any $x,y\in\R^N$, $s\in(0,1)$ and 
	$1\le p<\infty$.
	\end{proof}

Now we prove a compactness criterion. This result, combined with the lower semicontinuity of perimeters, ensures existence of local minimisers thanks to direct method of Calculus of Variations. 

\begin{lemma}[Compactness Criterion]
	Let $(E_n)$ be a sequence of measurable sets, let $s_n\to 1^-$ as $n\to\infty$ and
	\begin{equation}
	\sup_{n\in\N}(1-s_n)P^{\gamma,L}_{s_n}(E_n;\Omega')<\infty\quad\forall\Omega'\Subset\Omega.
	\end{equation}
	Then, there exists a subsequence $(E_{n_k})$ and a set $E$ with locally finite perimeter in $\Omega$ 
	such that $\chi_{E_{n_k}}\rightarrow\chi_E$ in $L_{\text{\rm loc}}^1(\Omega)$.
\end{lemma}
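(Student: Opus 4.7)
The plan is to reduce the compactness problem to the classical Euclidean Bourgain--Brezis--Mironescu type compactness for fractional seminorms, exploiting the kernel lower bound \eqref{eq:fractkernellowerbound} on compact subsets.

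\textbf{Step 1: Reduction to a Euclidean bound.} Fix an arbitrary $\Omega'\Subset\Omega$ with Lipschitz boundary. Since $\Omega'$ is bounded, there exist constants $0<m(\Omega')\le M(\Omega')$ with $m(\Omega')\le \gamma(x)\le M(\Omega')$ for $x\in\Omega'$, where $\gamma$ denotes the density of the Gaussian measure. Combining this with the pointwise bound \eqref{eq:fractkernellowerbound} gives
\[
P^{\gamma,L}_{s_n}(E_n;\Omega')\;\ge\; m(\Omega')^2\,C_{N,s_n}\int_{E_n\cap\Omega'}\int_{E_n^c\cap\Omega'}\frac{dx\,dy}{|x-y|^{N+s_n}}\;=\;\tfrac{1}{2}\,m(\Omega')^2\,C_{N,s_n}\,[\chi_{E_n}]_{W^{s_n,1}(\Omega')},
\]
where on the right we have the classical Euclidean Gagliardo seminorm. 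Since $s\mapsto C_{N,s}=2^{s+N/2}\Gamma((s+N)/2)$ is continuous at $s=1$, the quantities $C_{N,s_n}$ are bounded away from $0$ and $\infty$. Multiplying by $(1-s_n)$ and using the monotonicity $P^{\gamma,L}_{s_n}(E_n;\Omega')\le P^{\gamma,L}_{s_n}(E_n;\Omega'')$ for any $\Omega'\Subset\Omega''\Subset\Omega$, the hypothesis yields
\[
\sup_{n\in\N}(1-s_n)\,[\chi_{E_n}]_{W^{s_n,1}(\Omega')}<\infty.
\]

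\textbf{Step 2: Classical BBM compactness.} Apply the Bourgain--Brezis--Mironescu compactness theorem (see \cite{BouBreMir, ponce}): any sequence $\{u_n\}\subset L^1(\Omega')$ with $\sup_n\|u_n\|_{L^1(\Omega')}+(1-s_n)[u_n]_{W^{s_n,1}(\Omega')}<\infty$ along $s_n\to 1^-$ is precompact in $L^1(\Omega')$, and any cluster point belongs to $BV(\Omega')$. The $L^1$ bound is automatic for characteristic functions of $\Omega'$. Extracting a subsequence, we obtain $\chi_{E_n}\to u$ in $L^1(\Omega')$ with $u\in BV(\Omega')$; since $u$ is a pointwise a.e.\ limit of $\{0,1\}$-valued functions, $u=\chi_E$ for a measurable $E\subset\Omega'$ of finite Euclidean perimeter, hence of finite Gaussian perimeter in $\Omega'$.

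\textbf{Step 3: Diagonalisation.} Exhaust $\Omega$ by an increasing sequence of Lipschitz open sets $\Omega_k\Subset\Omega_{k+1}\Subset\Omega$ with $\bigcup_k\Omega_k=\Omega$. Applying Step 2 on each $\Omega_k$ and extracting a diagonal subsequence produces $E\subset\R^N$ and indices $n_j\to\infty$ such that $\chi_{E_{n_j}}\to\chi_E$ in $L^1(\Omega_k)$ for every $k$, i.e.\ in $L^1_{\text{loc}}(\Omega)$. The set $E$ has finite perimeter in each $\Omega_k$, hence locally finite perimeter in $\Omega$.

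The only conceptual point is Step~1: once the uniform-in-$s_n$ comparison between the Gaussian and Euclidean Gagliardo seminorms on compact subsets is in place, the rest is a direct quotation of the standard fractional compactness result plus a routine diagonal argument. The fact that the constants $C_{N,s}$ and the Gaussian density stay uniformly positive on $\Omega'$ is what makes the reduction clean.
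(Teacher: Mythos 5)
Your proof is correct, and it follows a somewhat different route from the one-line proof in the paper. The paper compares the Gaussian functional with the Euclidean nonlocal perimeter associated with the radial kernel $\tilde{K}_{s_n}$ via the upper estimate \eqref{eq:upperadialestimate} and then quotes the Fr\'echet--Kolmogorov-type compactness criterion of \cite{BerPal}*{Theorem 3.5}; you instead compare from \emph{below}, using \eqref{eq:fractkernellowerbound} together with the strict positivity of the Gaussian density on compact subsets, so that the hypothesis transfers into a uniform bound on the classical Gagliardo seminorms $(1-s_n)[\chi_{E_n}]_{W^{s_n,1}(\Omega')}$, after which the Bourgain--Brezis--Mironescu/Ponce compactness theorem (equivalently, \cite{AmDeMa}*{Theorem 1}) and a diagonal extraction finish the job. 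Your direction of comparison is the natural one for this purpose: a bound of $P^{\gamma,L}_{s_n}$ \emph{from above} by the $\tilde{K}_{s_n}$-energy does not by itself turn the assumed bound on the Gaussian energies into a bound on a Euclidean energy, so in the paper's version the quoted criterion of \cite{BerPal} must supply the translation estimates that in your argument come directly from the fractional seminorm bound; indeed, your Step 1 is exactly the comparison (lower kernel bound plus lower bound on the density on $\Omega'\Subset\Omega$) that the paper itself performs at the beginning of the proof of the $\Gamma$-liminf inequality, where \cite{AmDeMa}*{Theorem 1} is invoked. Both routes yield the same conclusion, namely precompactness of $(\chi_{E_n})$ in $L^1_{\mathrm{loc}}(\Omega)$ and locally finite perimeter of the limit set; yours is more self-contained in that every comparison constant is explicit and the only external input is the standard BBM compactness, while the paper's is shorter but delegates more to the cited result. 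Minor remarks: the monotonicity remark in your Step 1 is not needed, since the hypothesis already holds for every $\Omega'\Subset\Omega$; and in Step 3 one should note (as is routine) that the limits obtained on the exhausting sets $\Omega_k$ are a.e.\ consistent, so that they define a single set $E$.
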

\begin{proof}
	We simply notice that, thanks to \eqref{eq:upperadialestimate}, it holds that
	\begin{equation}
	\label{eq:compactnessfrekol}
	(1-s_n)P^{\gamma,L}_{s_n}(E_n;\Omega)\le(1-s_n)P^L_{\tilde{K}_{s_n}}(E_n;\Omega),
	\end{equation}
	where in the right-hand side of \eqref{eq:compactnessfrekol} for any $n\in\N$ the quantity 
	$P^L_{\tilde{K}_{s_n}}(E_n;\Omega)$ denotes the local part of the Euclidean nonlocal perimeter 
	with respect to the radial kernel $\tilde{K}_{s_n}$ of $E_n$ in $\Omega$.
	
	The rest of the proof is a simple consequence of the Fréchet-Kolmogorov compactness criterion in $L^1_{\text{loc}}$ (see for instance \cite[Theorem 3.5]{BerPal}).
\end{proof}

\subsection{Estimates on small cubes}\label{ss:estoncubs}
In this section we prove lower and upper estimates on the integral of the kernel $K_s$ on small 
cubes that are crucial in order to obtain the precise value of the constants in \eqref{eq:gammaliminf} 
and \eqref{eq:gammalimsup}. The upper estimate holds true for every $s\in (0,1)$, the lower estimate 
holds true only in the limit $s\uparrow 1$. 

\begin{lemma}[Estimate from above]\label{lem:upest}
Let $N\ge 2$, let $\Sigma$ be a $(N-1)$-dimensional plane and $x_0\in \Sigma$ and denote $\Sigma^\pm$ 
the two halfspaces determined by $\Sigma$. Let $Q_r(x_0)$ be a cube centred in $x_0$ with side length 
$r$ and faces either parallel or orthogonal to $\Sigma$ and set
$$
Q^\pm_r(x_0):=\Sigma^\pm\cap Q_r(x_0).
$$
Then, there is $C>0$ such that for any $s\in(0,1)$ the estimate 
\begin{equation}\label{eq:dom0}
(1-s)\int_{Q^+_r(x_0)}\int_{Q^-_r(x_0)}K_s(x,y)\:d\gamma(x)\:d\gamma(y)\le
\frac{1}{s2^{\frac{N-1-s}{2}}\pi^{\frac{N+1}{2}}} r^{N-1}e^{-\frac{|x_0|^2}{2}}(1+Cr)
\end{equation}
holds.
\end{lemma}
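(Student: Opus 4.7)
The plan is to reduce to a one-dimensional analysis by exploiting the radial structure provided by Lemma \ref{lem:upasest}. First, by that Lemma we have $K_s(x,y)\le e^{(|x|^2+|y|^2)/4}\tilde K_s(|x-y|)$ pointwise, so
$$K_s(x,y)\,d\gamma(x)\,d\gamma(y)\le \frac{1}{(2\pi)^N}\,e^{-(|x|^2+|y|^2)/4}\,\tilde K_s(|x-y|)\,dx\,dy.$$
For $x,y\in Q_r(x_0)$, Taylor-expanding $|x|^2 = |x_0|^2+2x_0\cdot(x-x_0)+|x-x_0|^2$ with $|x-x_0|\le r\sqrt{N}/2$ yields $e^{-(|x|^2+|y|^2)/4}\le e^{-|x_0|^2/2}(1+Cr)$ with $C=C(|x_0|,N,r_0)$ for $r\le r_0$. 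After translation and rotation, it suffices to bound $(1-s)\int_{Q_r^+}\int_{Q_r^-}\tilde K_s(|x-y|)\,dx\,dy$ by $\frac{2^{(N+1+s)/2}\pi^{(N-1)/2}}{s}\,r^{N-1}(1+Cr)$, where $Q_r^\pm$ are the two halves of $Q_r=(-r/2,r/2)^N$ cut by $\{x_N=0\}$.

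Next, I would insert the representation $\tilde K_s(|z|)=\int_0^\infty e^{-\alpha_t|z|^2}\,[t^{s/2+1}(1-e^{-2t})^{N/2}]^{-1}\,dt$ with $\alpha_t=e^t/(2(e^{2t}-1))$, and apply Fubini. The Gaussian $e^{-\alpha_t|x-y|^2}$ factorises into tangential ($x',y'$) and normal ($x_N,y_N$) pieces. Extending the $y'$-integration to $\R^{N-1}$ gives $\int_{\R^{N-1}}e^{-\alpha_t|x'-y'|^2}dy'=(\pi/\alpha_t)^{(N-1)/2}$, while the normal integral, by the substitution $u=x_N-y_N$, becomes $B_t:=\int_0^r \min(u,r-u)\,e^{-\alpha_t u^2}\,du$. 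After swapping the $t$- and $u$-integrations and simplifying the coefficients via $(e^{2t}-1)=e^{2t}(1-e^{-2t})$, the problem reduces to estimating
$$(1-s)\,r^{N-1}(2\pi)^{(N-1)/2}\int_0^r \min(u,r-u)\,h_s(u)\,du,\quad h_s(u):=\int_0^\infty \frac{e^{(N+1)t/2}}{\sqrt{e^{2t}-1}}\,e^{-\alpha_t u^2}\,\frac{dt}{t^{s/2+1}}.$$

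The final step is the small-$u$ analysis of $h_s$. The change of variables $t=u^2\tau$ together with the bounds $\sqrt{e^{2t}-1}\ge\sqrt{2t}$ and $\alpha_t\le 1/(4t)$ in a right neighbourhood of $0$ yields $h_s(u)\le 2^{s+1/2}\,\Gamma((s+1)/2)\,u^{-(s+1)}(1+Cu)$, the $t$-tail contributing only an $O(u)$ correction. Using $\min(u,r-u)\le u$, $(1-s)\int_0^{r/2}u^{-s}\,du=(r/2)^{1-s}$, so the product above is bounded by $r^{N-1}(2\pi)^{(N-1)/2}\,2^{s+1/2}\Gamma((s+1)/2)\,(r/2)^{1-s}(1+Cr)$. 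Arithmetic shows this equals $\frac{2^{(N+1+s)/2}\pi^{(N-1)/2}}{s}\,r^{N-1}$ at $s=1$; for $s\in(0,1)$ the surplus factor $s\cdot 2^{(s-1)/2}\Gamma((s+1)/2)(r/2)^{1-s}$ is $\le 1+Cr$ on the admissible range of $r$, so it is absorbed into the correction.

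The main obstacle is the sharp small-$u$ expansion of $h_s$ with a uniform-in-$s\in(0,1)$ first-order remainder: the inequalities $(e^{2t}-1)^{1/2}\ge\sqrt{2t}$ and $\alpha_t\le 1/(4t)$ are not tight enough by themselves to give an $O(u)$ error, so one must either carry out a second-order Taylor expansion in $t$ near $0$ or split $(0,\infty)$ into a small-$t$ part (where precise asymptotics reproduce the $\Gamma$-function coefficient) and a large-$t$ part (where $e^{-\alpha_t u^2}\le 1$ yields an $O(r)$ tail contribution). Tracking the many $s$-dependent constants so that they collapse exactly onto $\frac{1}{s\,2^{(N-1-s)/2}\pi^{(N+1)/2}}$ is the trickiest bookkeeping.
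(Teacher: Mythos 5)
The decisive step in your reduction is flawed. After replacing $K_s(x,y)$ by $e^{\frac{|x|^2+|y|^2}{4}}\tilde K_s(|x-y|)$ via Lemma \ref{lem:upasest}, you factorise the Gaussian $e^{-\alpha_t|x-y|^2}$, $\alpha_t=\frac{e^t}{2(e^{2t}-1)}=\frac{1}{4\sinh t}$, and enlarge the $y'$-integration to all of $\R^{N-1}$, picking up $(\pi/\alpha_t)^{\frac{N-1}{2}}=(4\pi\sinh t)^{\frac{N-1}{2}}$ for \emph{every} $t>0$. This is harmless only for small $t$; as $t\to\infty$ it grows like $e^{\frac{(N-1)t}{2}}$, while the true tangential integral over the cube stays bounded by $r^{N-1}$. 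Consequently your kernel
$h_s(u)=\int_0^\infty \frac{e^{(N+1)t/2}}{\sqrt{e^{2t}-1}}\,e^{-\alpha_t u^2}\,t^{-\frac s2-1}\,dt$
is $+\infty$ for every $u>0$ and every $N\ge 2$ (the integrand behaves like $e^{\frac{(N-1)t}{2}}t^{-\frac s2-1}$ at infinity, since $e^{-\alpha_t u^2}\to 1$), so the claimed bound $h_s(u)\le 2^{s+\frac12}\Gamma(\frac{s+1}{2})u^{-(s+1)}(1+Cu)$ is false and the subsequent bookkeeping is vacuous. A second, smaller slip: the inequality $\alpha_t\le \frac{1}{4t}$ gives a \emph{lower} bound on $e^{-\alpha_t u^2}$, which is the wrong direction for an upper estimate of $h_s$; you would need $\alpha_t\ge \frac{1}{4t}e^{-t}$ or similar, and you only partially flag this.

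The paper avoids this trap precisely by \emph{not} passing to the radial kernel for the main term: it keeps the Mehler kernel and the Gaussian measure and uses stochastic completeness, $\int_{\R^{N-1}}M_t^{N-1}(x',y')\,d\gamma_{N-1}(y')=1$ for all $t>0$, so enlarging the tangential integration costs nothing uniformly in $t$; the problem then becomes genuinely one-dimensional in $(x_N,y_N,t)$, and the resulting $t$-integral of $(1-e^{-2t})^{1/2}t^{-\frac s2-1}e^{-|(x_0)_N|^2/(1+e^{-t})}$ is split at a threshold $T$ and optimised (at $T=1/2$) to produce the constant $\frac{1}{s2^{(N-1-s)/2}\pi^{(N+1)/2}}$. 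Your approach could in principle be repaired by splitting the $t$-range at a fixed $T$, using $\min\{r^{N-1},(\pi/\alpha_t)^{(N-1)/2}\}$ for the tangential integral so that the large-$t$ part only contributes $O(r^{N+1}/s)$, and by a correct small-$t$ expansion uniform in $s$ (your final constant comparison $2^{\frac{s-1}{2}}\Gamma(\frac{s+1}{2})(2^s-1)r^{1-s}\le 1+Cr$ does hold for $r\le1$, and your omission of the $(r/2,r)$ range in $\int_0^r\min(u,r-u)u^{-(s+1)}\,du$ can also be fixed, as the full integral equals $\frac{(2^s-1)r^{1-s}}{s(1-s)}$). As written, however, the proof breaks at the definition of $h_s$.
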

\begin{proof}
Without loss of generality, we can suppose $\Sigma=\left\{x\in\R^N:x_N=(x_0)_N\right\}$ and 
$Q_r(x_0)=\left\{x\in\R^N: \max_{i=1,\ldots,N}|x_i-(x_0)_i|<\frac{r}{2}\right\}$.
In the sequel we write $x=(x',x_N)\in \R^{N-1}\times\R$. We have
\begin{align*}
&\int_{Q^+_r(x_0)}\int_{Q^-_r(x_0)}K_s(x,y)\:d\gamma(x)\:d\gamma(y)\\
&=\int_{(x_0)_N}^{(x_0)_N+\frac r2}d\gamma_{1}(y_N)\int_{(x_0)_N-\frac r2}^{(x_0)_N}d\gamma_{1}(x_N)\int_{Q^{N-1}_r(x'_0)}d\gamma_{N-1}(x')\int_{Q^{N-1}_r(x'_0)}K_s(x,y)\:d\gamma_{N-1}(y'),
\end{align*}
where $Q^{N-1}_r(x'_0)=\left\{x'\in\R^{N-1}:\max_{i=1,\ldots,N-1}|x'_i-(x'_0)_i|<\frac{r}{2} \right\}$.

We estimate the integrand with respect to the $x'$ variable:
\begin{align}\label{eq:dom1}
&\int_{(x_0)_N}^{(x_0)_N+\frac r2}d\gamma_{1}(y_N)\int_{(x_0)_N-\frac r2}^{(x_0)_N}d\gamma_{1}(x_N)\int_{\R^{N-1}}K_s(x,y)\:d\gamma_{N-1}(y')
\\ \nonumber
&=\int_{(x_0)_N}^{(x_0)_N+\frac r2}d\gamma_{1}(y_N)\int_{(x_0)_N-\frac r2}^{(x_0)_N}d\gamma_{1}(x_N)\int_{\R^{N-1}}d\gamma_{N-1}(y')\int_0^{\infty}\frac{M^1_t(x_N,y_N)M^{N-1}_t(x',y')}{t^{\frac s2+1}}dt
\\ \nonumber
&=\int_{(x_0)_N}^{(x_0)_N+\frac r2}d\gamma_{1}(y_N)\int_{(x_0)_N-\frac r2}^{(x_0)_N}d\gamma_{1}(x_N)
\int_0^{\infty}\frac{M^1_t(x_N,y_N)}{t^{\frac s2+1}}dt\int_{\R^{N-1}}M^{N-1}_t(x',y')d\gamma_{N-1}(y')
\\ \nonumber
&=\frac{r^2}{2\pi}\int_0^{\infty}\frac{dt}{t^{\frac{s}{2}+1}(1-e^{-2t})^{1/2}}\int_0^\frac12 dy_N\int_{-\frac12}^0\exp\left(-r^2\frac{|x_N-y_N|^2}{2(1-e^{-2t})}\right) \cdot
\\ \nonumber
&\quad\cdot \exp\left(-\frac{((x_0)_N+rx_N)((x_0)_N+ry_N)}{1+e^{-t}}\right)\:dx_N,
\end{align}
where in the second equality we used Tonelli Theorem and in the third we used that
\begin{equation*}
\int_{\R^{N-1}}M^{N-1}_t(x',y')d\gamma_{N-1}(y')=1,\quad\text{for any}\quad t>0\quad\text{and}\quad x'\in\R^{N-1}
\end{equation*}
and we performed the changes of variables $x_N\to\frac{x_N-(x_0)_N}{r}$ and $y_N\to\frac{y_N-(x_0)_N}{r}$.

Since there exists $C>0$ such that, for $r$ sufficiently small,
\begin{equation}\label{eq:stimaesponente}
-\frac{|(x_0)_N|^2}{(1+e^{-t})}-Cr\le-\frac{((x_0)_N+rx_N)((x_0)_N+ry_N)}{(1+e^{-t})}\le-\frac{|(x_0)_N|^2}{(1+e^{-t})}+Cr
\end{equation}
uniformly in $t>0$ and $|x_N|,|y_N|\le 1$, we can estimate the integrand with respect to $t$ in \eqref{eq:dom1} as follows
\begin{align}\label{eq:dom2}
&\frac{1}{t^{\frac{s}{2}+1}(1-e^{-2t})^{1/2}}
\int_0^\frac12 \! dy_N\int_{-\frac12 }^0\!\exp\Bigl(-r^2\frac{|x_N-y_N|^2}{2(1-e^{-2t})}\Bigr)
\exp\Bigl(-\frac{((x_0)_N+rx_N)((x_0)_N+ry_N)}{1+e^{-t}}\Bigr)dx_N
\nonumber \\  \nonumber
&\le\frac{1}{t^{\frac{s}{2}+1}}\int_0^\frac12 dy_N
\int_{-\frac12 }^0\exp\left(-r^2\frac{|x_N-y_N|^2}{2(1-e^{-2t})}\right)\:dx_N 
\frac{1}{(1-e^{-2t})^{1/2}}\exp\left(-\frac{|(x_0)_N|^2}{1+e^{-t}}\right)(1+Cr)
\\ \nonumber
&\le \frac{1}{t^{\frac{s}{2}+1}}\int_0^{\infty}dy_N
\int_0^{\infty}\exp\left(-r^2\frac{|x_N+y_N|^2}{2(1-e^{-2t})}\right)\:dx_N 
\frac{1}{(1-e^{-2t})^{1/2}}\exp\left(-\frac{|(x_0)_N|^2}{1+e^{-t}}\right)(1+Cr)
\\  \nonumber
&=\left(\int_0^{\pi/2}d\theta\int_0^{\infty}\rho e^{-\rho^2(1+\sin(2\theta))}d\rho\right)\frac{2}{r^2}\frac{(1-e^{-2t})^{1/2}}{t^{\frac{s}{2}+1}}\exp\left(-\frac{|(x_0)_N|^2}{1+e^{-t}}\right)(1+Cr) 
\\  \nonumber 
&=\left(\frac 12\int_0^{\pi/2}\frac{d\theta}{1+\sin(2\theta)}\right)\frac{2}{r^2}\frac{(1-e^{-2t})^{1/2}}{t^{\frac{s}{2}+1}}\exp\left(-\frac{|(x_0)_N|^2}{1+e^{-t}}\right)(1+Cr) 
\\  
&=\frac{1}{r^2}\frac{(1-e^{-2t})^{1/2}}{t^{\frac{s}{2}+1}}\exp\left(-\frac{|(x_0)_N|^2}{1+e^{-t}}\right)(1+Cr),
\end{align}
where we performed the change of variables
\begin{equation}\label{eq:polar}
\begin{cases*}
	\frac{rx_N}{\sqrt{2(1-e^{-2t})}}=\rho\cos \theta 
	\\
	\frac{ry_N}{\sqrt{2(1-e^{-2t})}}=\rho\sin \theta. 
	\\
\end{cases*}
\end{equation}
Putting \eqref{eq:dom2} in \eqref{eq:dom1} and integrating with respect to $\gamma_{N-1}(x')$, we obtain
\begin{equation*}
\begin{split}
&\int_{Q^+_r(x_0)}\int_{Q^-_r(x_0)}K_s(x,y)\:d\gamma(x)\:d\gamma(y)
\\
&\le (1+Cr)\frac{1}{(2\pi)^{\frac{N+1}{2}}}\int_{Q^{N-1}_r(x'_0)}e^{-\frac{|x'|^2}{2}}\:dx'\int_0^{\infty}\frac{(1-e^{-2t})^{1/2}}{t^{\frac{s}{2}+1}}\exp\left(-\frac{|(x_0)_N|^2}{1+e^{-t}}\right)\:dt
\\
&\le (1+Cr)\frac{1}{(2\pi)^{\frac{N+1}{2}}}r^{N-1}\int_{Q^{N-1}_1}e^{-\frac{|x'_0+rx'|^2}{2}}\:dx'
\int_0^{\infty}\frac{(1-e^{-2t})^{1/2}}{t^{\frac{s}{2}+1}}\exp\left(-\frac{|(x_0)_N|^2}{1+e^{-t}}\right)\:dt
\\
&\le(1+Cr)\frac{1}{(2\pi)^{\frac{N+1}{2}}}r^{N-1}e^{-\frac{|x'_0|^2}{2}} \int_{Q^{N-1}_1}\:dx'
\int_0^{\infty}\frac{(1-e^{-2t})^{1/2}}{t^{\frac{s}{2}+1}}\exp\left(-\frac{|(x_0)_N|^2}{1+e^{-t}}\right)\:dt.
\\
\end{split}
\end{equation*}
Using that $\mathcal{H}^{N-1}(Q^{N-1}_1)=1$ we have
\begin{equation}\label{eq:dom3}
\begin{split}
\int_{Q_r^+(x_0)}\int_{Q_r^-(x_0)} K_s(x,y)\:d\gamma(y)\:d\gamma(x)
&\le r^{N-1}(1+Cr)e^{-\frac{|x'_0|^2}{2}} 
\\
&\cdot\frac{1}{(2\pi)^{\frac{N+1}{2}}}\int_0^{\infty}
\frac{(1-e^{-2t})^{1/2}}{t^{\frac{s}{2}+1}}\exp\left(-\frac{|(x_0)_N|^2}{1+e^{-t}}\right)\:dt.
\end{split}
\end{equation}
Let us fix $T>0$ and split the integral on the right hand side in order to estimate separately the integrals on $(0,T)$ and $(T,\infty)$. As
$$
\frac{1}{(1+e^{-t})}\ge\frac{1}{2}
$$
for any $t\ge 0$, we obtain
\begin{equation}\label{eq:split1}
\int_{T}^{\infty}\frac{(1-e^{-2t})^{1/2}}{t^{\frac{s}{2}+1}}
\exp\left(-\frac{|(x_0)_N|^2}{1+e^{-t}}\right)\:dt
\le e^{-\frac{|(x_0)_N|^2}{2}}\int_{T}^{\infty}\frac{dt}{t^{\frac{s}{2}+1}}
=\frac{2}{s}T^{-\frac{s}{2}} e^{-\frac{|(x_0)_N|^2}{2}}.
\end{equation}
For every $t\ge 0$, it holds $(1-e^{-2t})^\frac{1}{2}\le \sqrt{2} t^\frac{1}{2}$
and
$$
\exp\left(-\frac{|(x_0)_N|^2}{1+e^{-t}}\right)\le\exp\left(-\frac{|(x_0)_N|^2}{2}\right),
$$
so we have
\begin{equation}\label{eq:split2}
\begin{split}
\int_0^{T}\frac{(1-e^{-2t})^{1/2}}{t^{\frac{s}{2}+1}}\exp\left(-\frac{|(x_0)_N|^2}{1+e^{-t}}\right)\:dt
&\le \sqrt{2}e^{-\frac{|(x_0)_N|^2}{2}}\int_0^{T}\frac{dt}{t^{\frac{s-1}{2}+1}}
\\
&=\frac{2\sqrt{2}}{1-s}T^{\frac{1-s}{2}}e^{-\frac{|(x_0)_N|^2}{2}}.
\end{split}
\end{equation}
By plugging estimates \eqref{eq:split1} and \eqref{eq:split2} into \eqref{eq:dom3}, multiplying by $(1-s)$ 
and minimising with respect to $T$ (the optimal value for the constant is achieved for $T=1/2$) we get the 
thesis.
\end{proof}

\begin{remark}\label{rem:upest}
Notice that we obtain the same estimate even if we replace $Q^\pm_r(x_0)$ with $\Sigma^\pm$.
\end{remark}

\begin{lemma}[Estimate from below]\label{lem:loest}
Under the hypotheses and notations of Lemma \ref{lem:upest}, there exists $C>0$ such that the
following estimate holds
\begin{equation}\label{eq:loest0}
\liminf_{s\to 1^-}(1-s)\int_{Q^+_r(x_0)}\int_{Q^-_r(x_0)}K_s(x,y)\:d\gamma(x)\:d\gamma(y)\ge\frac{1}{2^{\frac{N-2}{2}}\pi^{\frac{N+1}{2}}}r^{N-1}e^{-\frac{|x_0|^2}{2}}(1-Cr),
\end{equation}
hence
$$
\liminf_{s\to 1^-}\frac{(1-s)}{r^{N-1}}P^{\gamma,L}_s(\Sigma^+;Q_r(x_0))\ge \frac{1}{2^{\frac{N-2}{2}}\pi^{\frac{N+1}{2}}}e^{-\frac{|x_0|^2}{2}}(1-Cr).
$$
\end{lemma}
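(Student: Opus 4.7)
The plan is to mirror the proof of Lemma~\ref{lem:upest}, reversing the inequalities at each step and, crucially, replacing the extensions of integration domains (to $\R^{N-1}$ in the tangential variables and to $(0,\infty)$ in the normal ones) with tight lower bounds. We factor $M_t(x,y) = M^{N-1}_t(x',y')\,M^1_t(x_N,y_N)$ and write, by Fubini,
$$
\int_{Q^+_r(x_0)}\int_{Q^-_r(x_0)} K_s\,d\gamma\,d\gamma = \int_0^\infty \frac{dt}{t^{s/2+1}}\,\tilde I(t)\, J(t),
$$
where $\tilde I(t)$ is the integral of $M^{N-1}_t$ over $Q'_r(x'_0)\times Q'_r(x'_0)$ against $d\gamma_{N-1}\otimes d\gamma_{N-1}$ and $J(t)$ is the corresponding normal integral.

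For $\tilde I(t)$, the key observation is that $M^{N-1}_t(x',\cdot)\,d\gamma_{N-1}$ is the law of $Y = e^{-t}x' + \sqrt{1-e^{-2t}}\,Z$ with $Z\sim\mathcal N(0,I_{N-1})$. Restricting the outer integration to the slightly smaller cube $Q'_{r(1-\delta)}(x'_0)$ for some $\delta \in (0,1)$ and applying the standard Gaussian tail bound to $Z$ yields
$$
\tilde I(t) \ge \gamma_{N-1}(Q'_{r(1-\delta)}(x'_0))\,(1 - 2(N-1)\, e^{-c\, r^2\delta^2/t})
$$
valid for $t$ small enough that the mean shift $|(e^{-t}-1)x'_0| \le t|x_0|$ is dominated by $r\delta$; the escape probability is thus exponentially small in the $t \to 0^+$ regime that matters.

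For $J(t)$ we apply the same rescaling as in \eqref{eq:dom1} and replace the upper bound in \eqref{eq:stimaesponente} by its lower-bound counterpart (reversing $+Cr$ to $-Cr$). In the polar change of variables \eqref{eq:polar}, instead of extending the region to the full first quadrant we keep the actual bounded region, which contains the quarter-disc $\{\rho \le R_t\}$ with $R_t^2 = r^2/(8(1-e^{-2t}))$. Since
$$
\int_0^{\pi/2}\!\!\int_0^{R_t}\rho\, e^{-\rho^2(1+\sin 2\theta)}\, d\rho\, d\theta \ge \frac{1-e^{-R_t^2}}{2}\int_0^{\pi/2}\frac{d\theta}{1+\sin 2\theta} = \frac{1-e^{-R_t^2}}{2},
$$
we obtain
$$
J(t) \ge \frac{(1-e^{-2t})^{1/2}}{2\pi}(1-Cr)(1-e^{-R_t^2})\, e^{-|(x_0)_N|^2/(1+e^{-t})}.
$$
Combining the two, for any $T>0$ chosen so that $r^2\delta^2/T \to \infty$, for all $t \in (0,T)$ we get
$$
\tilde I(t)\,J(t) \ge \frac{r^{N-1}(1-\delta)^{N-1}(1-e^{-2t})^{1/2}}{(2\pi)^{(N+1)/2}}\, e^{-|x'_0|^2/2 - |(x_0)_N|^2/(1+e^{-t})}(1-Cr)(1-\varepsilon_T),
$$
with $\varepsilon_T \to 0$ as $T \to 0^+$.

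To conclude, use the elementary lower bounds $(1-e^{-2t})^{1/2} \ge \sqrt{2t}\,(1-T)$ and $e^{-|(x_0)_N|^2/(1+e^{-t})} \ge e^{-|(x_0)_N|^2/2}\,(1-CT)$ on $(0,T)$, together with the identity
$$
(1-s)\int_0^T \sqrt{2t}\,t^{-s/2-1}\,dt = 2\sqrt 2\, T^{(1-s)/2} \xrightarrow[s\to 1^-]{} 2\sqrt 2.
$$
Since $2\sqrt 2/(2\pi)^{(N+1)/2} = 1/(2^{(N-2)/2}\pi^{(N+1)/2})$, letting first $T \to 0^+$ and then $\delta \to 0^+$ (keeping $r^2\delta^2/T \to \infty$) yields \eqref{eq:loest0}; the stated inequality for $P^{\gamma,L}_s(\Sigma^+;Q_r(x_0))$ then follows immediately from the definition of the local part of the Gaussian fractional perimeter. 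The main obstacle is extracting the leading-order lower bound on the tangential factor: the identity $\int_{\R^{N-1}} M^{N-1}_t(x',\cdot)\,d\gamma_{N-1} = 1$ (used in the upper bound proof) only yields an upper bound on $\tilde I(t)$, so one must instead use the quantitative contractive character of the Ornstein-Uhlenbeck semigroup on short time scales, which is precisely the regime dominating $\liminf_{s \to 1^-}$.
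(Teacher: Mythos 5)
Your proposal is correct and is essentially the paper's own argument: after the exact tangential/normal factorisation of the Mehler kernel, your Ornstein--Uhlenbeck representation with Gaussian tail bounds for $\tilde I(t)$ plays the same role as the paper's rescaling-and-cube-inclusion estimate \eqref{eq:restopalla}, your quarter-disc polar computation for $J(t)$ is the one in \eqref{eq:loest2}, and the truncation at small $T$ with $(1-e^{-2t})^{1/2}\ge\sqrt{2t}\,(1-T)$ reproduces \eqref{eq:asy}--\eqref{eq:loestsplit1}, giving the same constant $2\sqrt{2}/(2\pi)^{\frac{N+1}{2}}=2^{-\frac{N-2}{2}}\pi^{-\frac{N+1}{2}}$. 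The only differences are bookkeeping (your multiplicative errors $(1-T)(1-CT)(1-\varepsilon_T)$ versus the paper's choice of $T_\delta$), so there is nothing to fix.
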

\begin{proof}
Let $\Sigma$ and $Q_r(x_0)$ be as in Lemma \ref{lem:upest}.
Let us consider $x\in Q^+_r(x_0)$ and estimate
$$
J_s(x):=\frac{e^{-\frac{x^2}{2}}}{(2\pi)^{N/2}}\int_{Q^+_r(x_0)}K_s(x,y)\:d\gamma(y).
$$
It holds
\begin{align}\label{eq:loest1}
J_s(x)=&\int_{Q^+_r(x_0)}\frac{e^{-\frac{x^2}{2}}}{(2\pi)^{N/2}}\:d\gamma(y)
\int_0^{\infty}\frac{M_t(x,y)}{t^{\frac{s}{2}+1}}\:dt
\\ \nonumber
=&\frac{1}{(2\pi)^N}\int_0^{\infty}\frac{dt}{t^{\frac{s}{2}+1}(1-e^{-2t})^{N/2}}
\int_{Q^+_r(x_0)}\exp\left(-\frac{|x-y|^2}{2(1-e^{-2t})}\right)\exp\left(-\frac{x\cdot y}{1+e^{-t}}\right)dy
\\ \nonumber
\ge&\frac{(1-Cr)}{(2\pi)^N}\int_0^{\infty}\exp\left(-\frac{|x_0|^2}{1+e^{-t}}\right)
\frac{dt}{t^{\frac{s}{2}+1}(1-e^{-2t})^{N/2}}
\int_{(x_0)_N}^{(x_0)_N+\frac r2}\exp\left(-\frac{|x_N-y_N|^2}{2(1-e^{-2t})}\right)\:dy_N \cdot
\\ \nonumber
&\qquad \cdot \int_{Q^{N-1}_r(x_0)}\exp\left(-\frac{|x'-y'|^2}{2(1-e^{-2t})}\right)\:dy',
\end{align}
where in the above inequality we applied \eqref{eq:stimaesponente} on each addend of $x\cdot y$.
Now, let us fix $\delta\in(0,1)$. Then, there exists $T_\delta>0$ such that, for any $t\in]0,T_\delta]$
\begin{equation}\label{eq:restopalla}
\gamma_{N-1}\left(Q^{N-1}_{\frac{r}{\sqrt{1-e^{-2t}}}-r}(0)\right)\ge 1-\delta
\end{equation}
and
\begin{equation}
\label{eq:asy}
(1-e^{-2t})^{1/2}\left(1-e^{-\frac{r^2}{8(1-e^{-2t})}}\right)\exp\left(-\frac{|x_0|^2}{1+e^{-t}}\right)
\ge e^{-\frac{|x_0|^2}{2}}(\sqrt{2}t^{1/2}-t)\ge 0
\end{equation}
(indeed, the first factor on the left hand side is $\geq \sqrt{2t}+o(t)$, the second one is 
$\geq 1-t$ and the third is $\geq e^{-\frac{|x_0|^2}{2}}-ct$). 
By \eqref{eq:loest1} we have
\begin{equation*}
\begin{split}
J_s(x)\ge&
\frac{(1-Cr)}{(2\pi)^N}\int_0^{T_\delta}\exp\left(-\frac{|x_0|^2}{1+e^{-t}}\right)\frac{dt}{t^{\frac{s}{2}+1}(1-e^{-2t})^{1/2}}\int_{(x_0)_N}^{(x_0)_N+\frac r2}\exp\left(-\frac{|x_N-y_N|^2}{2(1-e^{-2t})}\right)\:dy_N
\cdot \\
&\quad\cdot\int_{Q^{N-1}_{\frac{r}{\sqrt{1-e^{-2t}}}}(x_0'-x')}\exp\left(-\frac{|z'|^2}{2}\right)\:dz'
\end{split}
\end{equation*}
where we performed the change of variables
$$
z'=\frac{y'-x'}{\sqrt{1-e^{-2t}}}.
$$
Let us notice that the integration domain for $z'$, namely the cube 
$Q^{N-1}_{\frac{r}{\sqrt{1-e^{-2t}}}}(x_0'-x')$, satisfies
$$
Q^{N-1}_{\frac{r}{\sqrt{1-e^{-2t}}}}(x_0'-x')\supset Q^{N-1}_{\frac{r}{\sqrt{1-e^{-2t}}}-r}(0)
$$
for any $x'\in Q^{N-1}_r(x'_0)$. Indeed, if $z'\in Q^{N-1}_{\frac{r}{\sqrt{1-e^{-2t}}}-r}(0)$,
then $|x_i'-(x_0')_i|<\frac{r}{2}$ for $i=1,\ldots,N-1$ and 
$$
|z'_i-(x'-x_0')_i|\le|z'_i|+|x'_i-(x_0')_i|<\frac{r}{2\sqrt{1-e^{-2t}}}-\frac{r}{2}+\frac{r}{2}
=\frac{r}{2\sqrt{1-e^{-2t}}}.
$$
By using \eqref{eq:restopalla} we obtain
\begin{align*}
&J_s(x)\ge\frac{(1-Cr)}{(2\pi)^{\frac{N+1}{2}}}
\int_0^{T_\delta}\exp\left(-\frac{|x_0|^2}{1+e^{-t}}\right)\frac{dt}{t^{\frac{s}{2}+1}(1-e^{-2t})^{1/2}}
\int_{(x_0)_N}^{(x_0)_N+\frac r2}\exp\left(-\frac{|x_N-y_N|^2}{2(1-e^{-2t})}\right)dy_N\cdot
\\
&\qquad\qquad\qquad \cdot\gamma_{N-1}\left(Q^{N-1}_{\frac{r}{\sqrt{1-e^{-2t}}}-r}(0)\right)
\\
&\ge\frac{(1-\delta)(1-Cr)}{(2\pi)^{\frac{N+1}{2}}}
\int_0^{T_\delta}\!\!\!\exp\left(-\frac{|x_0|^2}{1+e^{-t}}\right)\frac{dt}{t^{\frac{s}{2}+1}(1-e^{-2t})^{1/2}}
\int_{(x_0)_N}^{(x_0)_N+\frac r2}\!\!\!\!\exp\left(-\frac{|x_N-y_N|^2}{2(1-e^{-2t})}\right)dy_N.
\end{align*}
Now, we integrate with respect to the $x$ variable
\begin{equation*}
\begin{split}
&\int_{Q^+_r(x_0)}\int_{Q^-_r(x_0)}K_s(x,y)\:d\gamma(x)\:d\gamma(y)=\int_{Q^-_r(x_0)}J_s(x)\:dx
\\
&\ge\frac{(1-\delta)(1-Cr)}{(2\pi)^{\frac{N+1}{2}}}\int_0^{T_\delta}\exp\left(-\frac{|x_0|^2}{1+e^{-t}}\right)\frac{dt}{t^{\frac{s}{2}+1}(1-e^{-2t})^{1/2}} \cdot
\\
&\quad \cdot \int_{(x_0)_N}^{(x_0)_N+\frac r2}\:dy_N\int_{(x_0)_N-\frac r2}^{(x_0)_N}
\exp\left(-\frac{|x_N-y_N|^2}{2(1-e^{-2t})}\right)\:dx_N\cdot\mathcal{L}^{N-1}(Q^{N-1}_r(x'_0))
\\
&=r^{N-1}\frac{(1-\delta)(1-Cr)}{(2\pi)^{\frac{N+1}{2}}}r^2\int_0^{T_\delta}
\exp\left(-\frac{|x_0|^2}{1+e^{-t}}\right)\frac{dt}{t^{\frac{s}{2}+1}(1-e^{-2t})^{1/2}}\cdot
\\
&\quad \cdot \int_0^{\frac 12}dy_N\int_{-\frac 12}^0\exp\left(-r^2\frac{|x_N-y_N|^2}{2(1-e^{-2t})}\right)\:dx_N,
\end{split}
\end{equation*}
where we replaced $x_N\to\frac{x_N-(x_0)_N}{r}$ and $y_N\to\frac{y_N-(x_0)_N}{r}$. Proceeding as in \eqref{eq:dom2}, we can estimate from below the integrand with respect to $t$ in \eqref{eq:loest1} as follows
\begin{align}\label{eq:loest2}
&\exp\left(-\frac{|x_0|^2}{1+e^{-t}}\right)\frac{1}{t^{\frac{s}{2}+1}(1-e^{-2t})^{1/2}}
\int_0^{\frac 12}dy_N\int_{-\frac 12}^0\exp\left(-r^2\frac{|x_N-y_N|^2}{2(1-e^{-2t})}\right)\:dx_N
\\ \nonumber
&\ge \exp\left(-\frac{|x_0|^2}{1+e^{-t}}\right)\frac{1}{t^{\frac{s}{2}+1}}
\int_0^{\frac 12}dy_N\int_0^{\frac 12}\exp\left(-r^2\frac{|x_N+y_N|^2}{2(1-e^{-2t})}\right)\:dx_N
\cdot\frac{1}{(1-e^{-2t})^{1/2}}(1-Cr)
\\ \nonumber
&\ge\exp\left(-\frac{|x_0|^2}{1+e^{-t}}\right)\left(\int_0^{\pi/2}d\theta
\int_0^{\frac{r}{2\sqrt{2(1-e^{-2t})}}}\rho e^{-\rho^2(1+\sin(2\theta))}d\rho\right)
\frac{2}{r^2}\frac{(1-e^{-2t})^{1/2}}{t^{\frac{s}{2}+1}}(1-Cr) 
\\ \nonumber
&\ge\exp\left(-\frac{|x_0|^2}{1+e^{-t}}\right)\left(\frac 12\int_0^{\pi/2}
\frac{d\theta}{1+\sin(2\theta)}\right)\left(1-e^{-\frac{r^2}{8(1-e^{-2t})}}\right)
\frac{2}{r^2}\frac{(1-e^{-2t})^{1/2}}{t^{\frac{s}{2}+1}}(1-Cr) 
\\ \nonumber
&=\frac{1}{r^2}\frac{(1-e^{-2t})^{1/2}}{t^{\frac{s}{2}+1}}\left(1-e^{-\frac{r^2}{8(1-e^{-2t})}}\right)
\exp\left(-\frac{|x_0|^2}{1+e^{-t}}\right)(1-Cr), 
\end{align}
where we performed the change of variables \eqref{eq:polar}. Then, it holds
\begin{equation}\label{eq:loest3}
\begin{split}
&\int_{Q^+_r(x_0)}\int_{Q^-_r(x_0)}K_s(x,y)\:d\gamma(x)\:d\gamma(y)\\
&\ge r^{N-1}\frac{(1-\delta)(1-Cr)}{(2\pi)^{\frac{N+1}{2}}}\int_0^{T_\delta}\frac{(1-e^{-2t})^{1/2}}{t^{\frac{s}{2}+1}}\left(1-e^{-\frac{r^2}{8(1-e^{-2t})}}\right)\exp\left(-\frac{|x_0|^2}{1+e^{-t}}\right)\:dt.
\end{split}
\end{equation}

In view of \eqref{eq:asy} we have
\begin{equation}\label{eq:loestsplit1}
\begin{split}
&\int_0^{T_\delta}\frac{(1-e^{-2t})^{1/2}}{t^{\frac{s}{2}+1}}\left(1-e^{-\frac{r^2}{8(1-e^{-2t})}}\right)\exp\left(-\frac{|x_0|^2}{1+e^{-t}}\right)\:dt
\\
&\ge \sqrt{2}e^{-\frac{|x_0|^2}{2}}\left(\int_0^{T_\delta}\frac{dt}{t^{\frac{s-1}{2}+1}}-\int_0^{T_\delta}\frac{dt}{t^{\frac{s}{2}}}\right)=2\sqrt{2}e^{-\frac{|x_0|^2}{2}}\left(\frac{T_\delta^{\frac{1-s}{2}}}{1-s}-\frac{T_\delta^{\frac{2-s}{2}}}{2-s}\right).
\end{split}
\end{equation}
By plugging estimate \eqref{eq:loestsplit1} into \eqref{eq:loest3} and multiplying by $(1-s)$ we get
$$
(1-s)\int_{Q^+_r(x_0)}\int_{Q^-_r(x_0)}\!\!K_s(x,y)\:d\gamma(x)\:d\gamma(y)\ge
\frac{(1-\delta)(1-Cr)}{2^{\frac{N-2}{2}}\pi^{\frac{N+1}{2}}}r^{N-1}e^{-\frac{|x_0|^2}{2}}
\left(T_\delta^{\frac{1-s}{2}}-\frac{1-s}{2-s}T_\delta^{\frac{2-s}{2}}\right)
$$
and letting $s\to1^-$ we obtain
$$
\liminf_{s\to 1^-}(1-s)\int_{Q^+_r(x_0)}\int_{Q^-_r(x_0)}K_s(x,y)\:d\gamma(x)\:d\gamma(y)\ge\frac{1}{2^{\frac{N-2}{2}}\pi^{\frac{N+1}{2}}}r^{N-1}e^{-\frac{|x_0|^2}{2}}(1-Cr)(1-\delta).
$$
Since $\delta$ is arbitrary, we get the thesis.
\end{proof}

\begin{corollary}
Under the hypotheses of Lemmas \ref{lem:upest} and \ref{lem:loest}, it holds
\begin{align*}
&\lim_{r\to 0^+}\frac{1}{r^{N-1}}
\liminf_{s\to 1^-}(1-s)\int_{Q^+_r(x_0)}\int_{Q^-_r(x_0)}K_s(x,y)\:d\gamma(x)\:d\gamma(y)
\\
&=\lim_{r\to 0^+}\frac{1}{r^{N-1}}
\limsup_{s\to 1^-}(1-s)\int_{Q^+_r(x_0)}\int_{Q^-_r(x_0)}K_s(x,y)\:d\gamma(x)\:d\gamma(y)
=\frac{1}{2^{\frac{N-2}{2}}\pi^{\frac{N+1}{2}}}e^{-\frac{|x_0|^2}{2}}.
\end{align*}
\begin{proof}
It is sufficient to notice that the constant in Lemma \ref{lem:upest} converges to 
$\frac{1}{2^{\frac{N-2}{2}}\pi^{\frac{N+1}{2}}}$ as $s\to1^{-}$.
\end{proof}
\end{corollary}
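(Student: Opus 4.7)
The plan is to sandwich the quantity of interest between the bounds already established in Lemmas \ref{lem:upest} and \ref{lem:loest}, and then observe that the two bounds share the same limiting prefactor as $s\to 1^-$ and $r\to 0^+$.

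First I would invoke Lemma \ref{lem:upest}: for all $s\in(0,1)$ and all sufficiently small $r>0$,
\begin{equation*}
(1-s)\int_{Q^+_r(x_0)}\int_{Q^-_r(x_0)}K_s(x,y)\:d\gamma(x)\:d\gamma(y)\le\frac{1}{s\, 2^{\frac{N-1-s}{2}}\pi^{\frac{N+1}{2}}}\, r^{N-1} e^{-\frac{|x_0|^2}{2}}(1+Cr).
\end{equation*}
Taking $\limsup_{s\to 1^-}$ and using that the $s$-dependent prefactor is continuous in $s$ and satisfies
\begin{equation*}
\lim_{s\to 1^-}\frac{1}{s\, 2^{\frac{N-1-s}{2}}\pi^{\frac{N+1}{2}}}=\frac{1}{2^{\frac{N-2}{2}}\pi^{\frac{N+1}{2}}},
\end{equation*}
one obtains
\begin{equation*}
\limsup_{s\to 1^-}(1-s)\int_{Q^+_r(x_0)}\int_{Q^-_r(x_0)}K_s(x,y)\:d\gamma(x)\:d\gamma(y)\le \frac{1}{2^{\frac{N-2}{2}}\pi^{\frac{N+1}{2}}}\, r^{N-1} e^{-\frac{|x_0|^2}{2}}(1+Cr).
\end{equation*}
On the other hand, Lemma \ref{lem:loest} gives directly
\begin{equation*}
\liminf_{s\to 1^-}(1-s)\int_{Q^+_r(x_0)}\int_{Q^-_r(x_0)}K_s(x,y)\:d\gamma(x)\:d\gamma(y)\ge \frac{1}{2^{\frac{N-2}{2}}\pi^{\frac{N+1}{2}}}\, r^{N-1} e^{-\frac{|x_0|^2}{2}}(1-Cr).
\end{equation*}

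Dividing both inequalities by $r^{N-1}$ and using the trivial bound $\liminf\le\limsup$, it follows that
\begin{equation*}
\frac{e^{-\frac{|x_0|^2}{2}}(1-Cr)}{2^{\frac{N-2}{2}}\pi^{\frac{N+1}{2}}}\le \frac{1}{r^{N-1}}\liminf_{s\to 1^-}(\cdots)\le \frac{1}{r^{N-1}}\limsup_{s\to 1^-}(\cdots)\le \frac{e^{-\frac{|x_0|^2}{2}}(1+Cr)}{2^{\frac{N-2}{2}}\pi^{\frac{N+1}{2}}}.
\end{equation*}
Letting $r\to 0^+$, the outer bounds both converge to $\frac{e^{-|x_0|^2/2}}{2^{(N-2)/2}\pi^{(N+1)/2}}$, which forces the two middle quantities to have a common limit equal to this value. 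Since the preparatory lemmas already perform the substantive work, no further obstacle remains; the argument is simply a squeeze together with continuity of the upper-bound constant at $s=1$.
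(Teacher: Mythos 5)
Your squeeze argument is correct and is essentially the paper's proof: the paper likewise combines the lower bound of Lemma \ref{lem:loest} with the upper bound of Lemma \ref{lem:upest}, noting that the constant $\frac{1}{s\,2^{\frac{N-1-s}{2}}\pi^{\frac{N+1}{2}}}$ tends to $\frac{1}{2^{\frac{N-2}{2}}\pi^{\frac{N+1}{2}}}$ as $s\to 1^-$, and then lets $r\to 0^+$. You have simply written out explicitly the steps the paper leaves implicit.
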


\subsection{Gluing}\label{ss:gluing}
In this subsection we perform a construction similar to the one in \cite{AmDeMa} that is going to be 
essential to prove the $\Gamma-\liminf$ result. The sets $\Omega_\delta^\pm$ are defined in 
\eqref{eq:omegadeltapiumeno}. To do this we introduce the finite measure 
\begin{equation}\label{deflambda}
\lambda:=\frac{1}{(2\pi)^{N/2}}e^{-\frac{|\cdot|^2}{4}}\mathcal{L}^N.
\end{equation}

\begin{proposition}
	\label{prop:gluing}
	Let $Q\Subset\R^N$ be a Lipschitz set. Given $s\in(0,1)$, $E_1,E_2\subset\R^N$ measurable sets such 
	that $P^{\gamma,L}_s(E_i;Q)<\infty$, $i=1,2$, and given $\delta_1>\delta_2>0$ there is a measurable 
	set $F$ such that
	\begin{enumerate}
		\item $\left\|\chi_F-\chi_{E_1}\right\|_{L^1_\gamma(Q)}\le
		\left\|\chi_{E_1}-\chi_{E_2}\right\|_{L^1_\gamma(Q)},$
		\item $F\cap\left(Q\setminus Q^-_{\delta_1}\right)=E_1\cap\left(Q\setminus Q^-_{\delta_1}\right)$, 
		\quad $F\cap\Omega^-_{\delta_2}=E_2\cap\Omega^-_{\delta_2}.$
		\item For all $\varepsilon>0$ we have 
		\begin{equation*}
		\begin{split}
		P^{\gamma,L}_s(F;Q)\le& P^{\gamma,L}_s(E_1;Q)
		+P^{\gamma,L}_s(E_2;Q^-_{\delta_1+\varepsilon})+2^{N+1}\tilde{K}_s(\varepsilon) 
		\\
		&+C'(N,Q,\delta_1,\delta_2)\left(\frac{c_N}{1-s}+\frac{c'(Q,N)}{s}\right)
		\left\|\chi_{E_1}-\chi_{E_2}\right\|_{L^1_\lambda(Q^-_{\delta_1}\setminus Q^-_{\delta_2})} 
		\\
		&+C'(N,Q,\delta_1,\delta_2)\left(\frac{d_N}{2-s}+\frac{d'(Q,N)}{s}\right)
		\left\|\chi_{E_1}-\chi_{E_2}\right\|_{L^1_\lambda(Q)}.
		\end{split}
		\end{equation*}
	\end{enumerate}
\end{proposition}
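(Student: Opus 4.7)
The plan is to construct $F$ as a member of a one-parameter family of ``gluings'' obtained by using $E_2$ on a subcollar and $E_1$ on its complement, and to pick the parameter so as to balance the perimeter estimate. Concretely, for $\rho\in(\delta_2,\delta_1)$ I set
$$
F_\rho:=\bigl(E_1\cap(Q\setminus Q^-_\rho)\bigr)\cup\bigl(E_2\cap Q^-_\rho\bigr).
$$
For every such $\rho$, item (2) is automatic, while $\chi_{F_\rho}-\chi_{E_1}$ is supported on $\{\chi_{E_1}\neq\chi_{E_2}\}\cap Q^-_\rho\subset Q$, which gives item (1). The work is to select some $\rho=\rho^*\in(\delta_2,\delta_1)$ for which item (3) holds; this will be done by averaging $P^{\gamma,L}_s(F_\rho;Q)$ over $\rho$ and invoking the mean value theorem.

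For (3) I expand $P^{\gamma,L}_s(F_\rho;Q)$ and decompose $Q\times Q$ into the four ``quadrants'' determined by $Q^-_\rho$ and $Q\setminus Q^-_\rho$. On $(Q\setminus Q^-_\rho)^2$ the integrand coincides with that of $P^{\gamma,L}_s(E_1;Q)$, and on $(Q^-_\rho)^2$ with that of $P^{\gamma,L}_s(E_2;Q^-_\rho)\leq P^{\gamma,L}_s(E_2;Q^-_{\delta_1+\varepsilon})$, producing the first two terms in (3). On each of the two cross quadrants I add and subtract $\chi_{E_1}(y)$ (respectively $\chi_{E_2}(x)$), absorbing a ``diagonal'' piece into $P^{\gamma,L}_s(E_1;Q)$ and leaving a residual bounded pointwise by $|\chi_{E_1}(y)-\chi_{E_2}(y)|\,K_s(x,y)$ (respectively its symmetric counterpart).

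To estimate the residual I would use Lemma \ref{lem:upasest} to pass from $K_s(x,y)\,d\gamma(x)$ to $(2\pi)^{-N/2}e^{|y|^2/4}\tilde K_s(|x-y|)e^{-|x|^2/4}\,dx$, which trades one Gaussian factor for a copy of the $\lambda$-density (see \eqref{deflambda}) and retains the decreasing radial kernel $\tilde K_s$. I split the $x$-integral at the scale $\varepsilon$: on $|x-y|\geq\varepsilon$, monotonicity of $\tilde K_s$ yields a uniform bound $\tilde K_s(\varepsilon)\lambda(Q)\leq 2^N\tilde K_s(\varepsilon)$, and summing over the two cross terms produces the additive $2^{N+1}\tilde K_s(\varepsilon)$ appearing in (3). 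On $|x-y|<\varepsilon$, where $\tilde K_s$ is singular, the integrability encoded in \eqref{eq:kernelsummability} takes over: near the origin $\int|z|\tilde K_s(|z|)\,dz$ produces the $(1-s)^{-1}$ divergence (cf.\ $J_2$ in the proof of Lemma \ref{lem:upasest}), while the tail is controlled at rate $s^{-1}$ (cf.\ $K_1$).

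The last step is averaging over $\rho\in(\delta_2,\delta_1)$. Swapping the order of integration, a given $y$ in the collar $Q^-_{\delta_1}\setminus Q^-_{\delta_2}$ is crossed by the level sets $\partial Q^-_\rho$ for $\rho$ in an interval of length at most $\delta_1-\delta_2$, producing the prefactor $(\delta_1-\delta_2)^{-1}$ hidden in $C'(N,Q,\delta_1,\delta_2)$; this is the origin of the $\|\chi_{E_1}-\chi_{E_2}\|_{L^1_\lambda(Q^-_{\delta_1}\setminus Q^-_{\delta_2})}$ term with factors $\tfrac{c_N}{1-s}+\tfrac{c'(Q,N)}{s}$. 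The last term of (3), with denominators $2-s$ and $s$ and the global norm $\|\cdot\|_{L^1_\lambda(Q)}$, comes from the distant cross interactions (those where $x$ is far from $y$ in $Q$) after one extra $x$-integration absorbs a further power in the exponent of the kernel, yielding $(2-s)^{-1}$ in place of $(1-s)^{-1}$. Any $\rho^*$ realising the average then defines the desired $F$. The main obstacle is technical rather than conceptual: tracking the Gaussian weights, the $\rho$-averaging on the collar and the scale splitting of $\tilde K_s$ simultaneously, so that the constants assemble into the stated form.
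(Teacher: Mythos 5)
Your construction is a genuinely different route from the paper's: the paper glues with a smooth cutoff $\varphi$ ($\varphi\equiv 0$ on $Q^-_{\delta_2}$, $\varphi\equiv 1$ on $Q\setminus Q^-_{\delta_1}$, $|\nabla\varphi|\le 2/(\delta_1-\delta_2)$), estimates the seminorm of $w=\varphi\chi_{E_1}+(1-\varphi)\chi_{E_2}$ by splitting into three terms, and then extracts $F$ as a superlevel set $\{w>t^\star\}$ via the coarea formula (Lemma \ref{lem:coarea}); you use the sharp interpolation $F_\rho=(E_1\cap(Q\setminus Q^-_\rho))\cup(E_2\cap Q^-_\rho)$ and select $\rho$ by averaging. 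This slicing idea is viable in principle, and items (1)--(2) do hold for every admissible $\rho$. However, as written your argument does not deliver item (3) in the stated form, for two concrete reasons.

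First, the collar localization of the $(1-s)^{-1}$ term fails with your averaging. After comparing the cross quadrants to $E_1$, the residual is $\int_{Q^-_\rho}|\chi_{E_1}-\chi_{E_2}|(z)\bigl(\int_{Q\setminus Q^-_\rho}K_s(z,y)\,d\gamma(y)\bigr)d\gamma(z)$, and the inner integral is of order $(\rho-d(z,Q^c))^{-s}$. Averaging over $\rho\in(\delta_2,\delta_1)$ produces the weight $\frac{(\delta_1-d(z,Q^c))^{1-s}-(\delta_2-d(z,Q^c))^{1-s}}{(1-s)(\delta_1-\delta_2)}$, which is of size $(1-s)^{-1}(\delta_1-\delta_2)^{-s}$ not only for $z$ in the collar but for every $z\in Q^-_{\delta_2}$ whose distance from $\partial Q$ is close to $\delta_2$ (and, with your hybrid choice of comparing one cross quadrant to $E_2$, the residual also sits at points $y\in Q\setminus Q^-_\rho$, so points just outside $Q^-_{\delta_1}$ are affected as well). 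Hence the naive average yields at best $(1-s)^{-1}\|\chi_{E_1}-\chi_{E_2}\|_{L^1_\lambda(Q^-_{\delta_1})}$, not the stated $(1-s)^{-1}$-term on $Q^-_{\delta_1}\setminus Q^-_{\delta_2}$ plus a $(2-s)^{-1}$-term on $Q$. Second, your account of where the $(2-s)^{-1}$ factor comes from (``one extra $x$-integration absorbs a further power in the exponent of the kernel'') is not an actual mechanism of the sharp-interface computation; in the paper that factor arises precisely from the second-order Taylor remainder of the smooth cutoff, via the second kernel moment $\int_Q|x-y|^2K_s(x,y)\,d\gamma(x)\sim(2-s)^{-1}$, while the first moment ($\sim(1-s)^{-1}$) is multiplied by $|D\varphi(y)|$ and is therefore supported exactly on the collar --- this is the whole point of the smooth interpolation, which your construction discards. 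Your approach can be repaired, e.g.\ by averaging only over $\rho\in\bigl(\tfrac{\delta_1+\delta_2}{2},\delta_1\bigr)$ so that $Q^-_{\delta_2}$ stays at distance at least $(\delta_1-\delta_2)/2$ from the moving interface (its contribution is then bounded uniformly in $s$ and can be absorbed into the $(2-s)^{-1}$ global term) and by comparing both cross quadrants to $E_1$ so the residual stays in $Q^-_\rho$; but this step is missing, and the constants so obtained differ from the $c_N$, $c'(Q,N)$, $d_N$, $d'(Q,N)$ of the statement, which are produced by the paper's cutoff-plus-coarea proof.
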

\begin{proof}
	Let $\varphi\in C^{\infty}(\R^N)$ such that $0\le\varphi\le 1$ in $Q$, 
	$\varphi\equiv 0$ in $Q^-_{\delta_2}$, $\varphi\equiv 1$ in $Q\setminus Q^-_{\delta_1}$ and 
	$|\nabla\varphi|\le\frac{2}{\delta_1-\delta_2}$.
	Given $u_1,u_2$ two measurable functions, $u,v:Q\rightarrow[0,1]$ s.t. 
	$[u_i]_{W^{s,1}_\gamma(Q)}<\infty$, $i=1,2$.
	Define $w:=\varphi u_1+(1-\varphi)u_2$. For $x,y\in Q$ we have
	\begin{equation*}
	\begin{split}
	w(x)-w(y)=&(\varphi(x)-\varphi(y))u_1(y)+\varphi(x)(u_1(x)-u_1(y)) 
	\\
	&+(1-\varphi(x))(u_2(x)-u_2(y))-u_2(y)(\varphi(x)-\varphi(y)) 
	\\
	=&(\varphi(x)-\varphi(y))(u_1(y)-u_2(y))+\varphi(x)(u_1(x)-u_1(y))
	\\
	&+(1-\varphi(x))(u_2(x)-u_2(y)),
	\end{split}
	\end{equation*}
	and this implies
	$$
	|w(x)-w(y)|\le|\varphi(x)-\varphi(y)||u_1(y)-u_2(y)|+\chi_{\{\varphi\not\equiv 0\}}(x)|u_1(x)-u_1(y)|
	+\chi_{\{\varphi\not\equiv 1\}}(x)|u_2(x)-u_2(y)|.
	$$
	Since $\{\varphi\not\equiv 0\}\subset Q\setminus Q^-_{\delta_2}$ and 
	$\{\varphi\not\equiv 1\}\subset Q^-_{\delta_1}$ we get
	\begin{equation*}
	\begin{split}
	[w]_{W^{s,1}_\gamma(Q)}\le&\int_Q|u_1(y)-u_2(y)|d\gamma(y)
	\int_Q|\varphi(x)-\varphi(y)|K_s(x,y)d\gamma(x) 
	\\
	&+\int_{Q\setminus Q^-_{\delta_2}}d\gamma(y)\int_Q|u_1(x)-u_1(y)|K_s(x,y)d\gamma(x)
	\\
	&+\int_{Q^-_{\delta_1}}d\gamma(y)\int_Q|u_2(x)-u_2(y)|K_s(x,y)d\gamma(x)
	\\
	=:&I_1+I_2+I_3.
	\end{split}
	\end{equation*}
Let us estimate $I_1$. Using \eqref{eq:upperadialestimate} and
	$$
	|\varphi(x)-\varphi(y)|\le|D\varphi(y)||x-y|+\frac 12\left\|D^2\varphi\right\|_\infty|x-y|^2,
	$$
	for $\alpha\in\{1,2\}$ we define the constants $C(N,Q,\alpha,s)$ through the following estimate
	\begin{equation*}
	\begin{split}
	\int_Q|x-y|^\alpha K_s(x,y)d\gamma(x)&\le e^{\frac{|y|^2}{4}}
	\int_Q|x-y|^\alpha\tilde{K}_s(|x-y|)d\lambda(x) \\
	&\le e^{\frac{|y|^2}{4}}\int_{B_{R_Q}(y)}|x-y|^\alpha\tilde{K}_s(|x-y|)d\lambda(x) 
	\\
	&\le \frac{e^{\frac{|y|^2}{4}}}{(2\pi)^{N/2}}\int_{B_{R_Q}(y)}|x-y|^\alpha\tilde{K}_s(|x-y|)dx 
	\\
	&=\frac{N\omega_N}{(2\pi)^{N/2}} e^{\frac{|y|^2}{4}}\int_0^{R_Q}\rho^{N+\alpha-1}d\rho
	\int_0^{\infty}\frac{\exp\left(-\frac{e^t\rho^2}{2(e^{2t}-1)}\right)}{t^{\frac s2+1}(1-e^{-2t})^{N/2}}dt 
	\\
	&=:e^{\frac{|y|^2}{4}}C(N,Q,\alpha,s),
	\end{split}
	\end{equation*}
where $R_Q$ is large enough to get $Q\subset B_{R_Q}(y)$. We have 
	\begin{equation}
	\label{eq:iuno}
	\begin{split}
	I_1\le&\int_Q|u_1(y)-u_2(y)|d\gamma(y)\int_Q\left(|D\varphi(y)||x-y|
	+\frac{\left\|D^2\varphi\right\|_\infty}{2}|x-y|^2\right)K_s(x,y)d\gamma(x)
	\\
	\le& \int_Q|u_1(y)-u_2(y)|d\lambda(y)\int_Q\left(|D\varphi(y)||x-y|
	+\frac{\left\|D^2\varphi\right\|_\infty}{2}|x-y|^2\right)\tilde{K}_s(|x-y|)d\lambda(x)
	\\
	\le& C(Q,\delta_1,\delta_2)\left(C(N,Q,1,s)
	\left\|u_1-u_2\right\|_{L^1_\lambda(Q^-_{\delta_1}\setminus Q^-_{\delta_2})}
	+\frac{C(N,Q,2,s)}{2}\left\|u_1-u_2\right\|_{L^1_\lambda(Q)}\right) 
	\\
	\le& C'(N,Q,\delta_1,\delta_2)\left(\left(\frac{c_N}{1-s}+\frac{c'(Q,N)}{s}\right)
	\left\|u_1-u_2\right\|_{L^1_\lambda(Q^-_{\delta_1}\setminus Q^-_{\delta_2})}\right.
	\\
	&+\left.\left(\frac{d_N}{2-s}+\frac{d'(Q,N)}{s}\right)
	\left\|u_1-u_2\right\|_{L^1_\lambda(Q)}\right),
	\end{split}
	\end{equation}
	where
	$$
c_N:=2^{\frac{N+3}{2}}\Gamma\left(\frac{N+1}{2}\right)e^{\frac{N+1}{2}}, 
c'(Q,N):=\frac{2R_Q^{N+1}}{(N+1)(1-e^{-2})},
d_N:=2^{\frac{N+3}{2}}\Gamma\left(\frac{N+2}{2}\right)e^{\frac{N+2}{2}},
$$
$$
d'(Q,N):=\frac{R_Q^{N+2}}{(N+2)(1-e^{-2})}.
$$
Before going on, let us check that the product $(1-s)C(N,Q,\alpha,s)$ is bounded for $\alpha=1,2$ and 
$s\to1^-$. 
For, if we split $C(N,Q,\alpha,s)$ in the contribution of $(\rho,t)\in(0,R_Q)\times(0,1)$ and 
$(\rho,t)\in(0,R_Q)\times(1,\infty)$ with analogous computions as in Lemma \ref{lem:upasest} we have
\begin{equation}
\label{eq:stimagrande}
\int_0^{R_Q}\rho^{N+\alpha-1}d\rho\int_1^{\infty}\frac{\exp\left(-\frac{e^t\rho^2}{2(e^{2t}-1)}\right)}
{t^{\frac s2+1}(1-e^{-2t})^{N/2}}dt\le\frac{R_Q^{N+\alpha}}{N+\alpha}\frac{2}{s(1-e^{-2})}
\end{equation}
and
\begin{align}
\label{eq:stimapiccola}
\int_0^{R_Q}\rho^{N+\alpha-1}d\rho&\int_0^1\exp\Bigl(-\frac{e^t\rho^2}{2(e^{2t}-1)}\Bigr)
\frac{dt}{t^{\frac s2+1}(1-e^{-2t})^{N/2}} 
\\ \nonumber
&\leq\int_0^1\frac{dt}{t^{\frac s2+1}(1-e^{-2t})^{N/2}}
\int_0^{\infty}\rho^{N+\alpha-1}\exp\Bigl(-\frac{e^t\rho^2}{2(e^{2t}-1)}\Bigr)d\rho 
\\ \nonumber
&=2^{\frac{N+\alpha-1}{2}}\Gamma\left(\frac{N+\alpha}{2}\right)
\int_0^1\frac{(e^{2t}-1)^{\alpha/2}e^{\frac{N-\alpha}{2}t}}{t^{\frac s2+1}}dt 
\\ \nonumber
&\le 2^{\frac{N+2\alpha-1}{2}}\Gamma\left(\frac{N+\alpha}{2}\right)e^{\frac{N+\alpha}{2}}\int_0^1 t^{\frac{\alpha-s}{2}-1}dt
\\ \nonumber
&=2^{\frac{N+2\alpha+1}{2}}\Gamma\left(\frac{N+\alpha}{2}\right)\frac{e^{\frac{N+\alpha}{2}}}{\alpha-s}.
\end{align}
	For $I_2$ we notice that trivially 
	\begin{equation}
	\label{eq:idue}
	I_2\le[u_1]_{W^{s,1}_\gamma(Q)}.
	\end{equation}
	For $I_3$ we have
	\begin{equation}
	\label{eq:itre}
	\begin{split}
	I_3=&\int_{Q^-_{\delta_1}}d\gamma(y)\int_{Q^-_{\delta_1+\varepsilon}}
	|u_2(x)-u_2(y)|K_s(x,y)d\gamma(x)
	\\
	&+\int_{Q^-_{\delta_1}}d\gamma(y)\int_{Q\setminus Q^-_{\delta_1+\varepsilon}}
	|u_2(x)-u_2(y)|K_s(x,y)d\gamma(x) 
	\\
	\le&[u_2]_{W^{s,1}_\gamma(Q^-_{\delta_1+\varepsilon})}+
	2\tilde{K}_s(\varepsilon)\lambda(Q^-_{\delta_1})\lambda(Q\setminus Q^-_{\delta_1+\varepsilon})\le[u_2]_{W^{s,1}_\gamma(Q^-_{\delta_1+\varepsilon})}+2^{N+1}\tilde{K}_s(\varepsilon).
	\end{split}
	\end{equation}
	Summing up \eqref{eq:iuno}, \eqref{eq:idue} and \eqref{eq:itre} we prove (3). Using Lemma \ref{lem:coarea} we deduce that there exists $t^\star\in(0,1)$ such that $F:=\{w>t^\star\}$ and
	$$
	2P^{\gamma,L}_s(F;Q)\le [w]_{W^{s,1}_\gamma(Q)}.
	$$
	If we specialise the previous estimates choosing $u_1:=\chi_{E_1}$ and $u_2:=\chi_{E_2}$ we obtain the desired estimate for the local part of the perimeter. Moreover, by construction the set $F$ satisfies points (1) and (2).
\end{proof}

\section{Proof of the Main Theorem}\label{sec:pfmainth}
\begin{proof}{\textbf{Liminf inequality}}
	Let us prove that $E$ is a Caccioppoli set. If $\Omega'\Subset\Omega$, there is $c_0=c_0(\Omega')$ such
	that $\gamma(x)\gamma(y)\ge c_0$ for every $x,y\in\Omega'$. Therefore, we may compare 
	$P^{\gamma,L}_{s_n}$ with its Euclidean counterpart $P^{L}_{s_n}$ using \eqref{eq:fractkernellowerbound}
	and we get 
	$$
	C_{N,s}c_0\limsup_{n\to\infty}(1-s_n)P^L_{s_n}(E_n;\Omega')\le
	\lim_{n\to\infty}(1-s_n)P^{\gamma,L}_{s_n}(E_n;\Omega')<\infty.
		$$		
By \cite[Theorem 1]{AmDeMa}, we know that $E$ has locally finite perimeter in $\Omega$.  
	Let us denote by $\mathcal{C}$ the family of all $N$-cubes in $\R^N$
	$$
	\mathcal{C}:=\left\{R(x+rQ):\quad x\in\R^N, r>0, R\in SO(N) \right\},
	$$
	where $Q:=\left(-\frac 12,\frac 12\right)^N$ and let $s_n$, $E_n$ be such that $s_n\rightarrow 1$ and 
	$\chi_{E_n}\rightarrow\chi_E$ in $L^1_{\text{loc}}(\R^N)$.
Our claim is
		$$
		\liminf_{n\to\infty}(1-s_n)P^{\gamma,L}_{s_n}(E_n;\Omega)\ge \frac{\sqrt 2}{\pi}P^\gamma(E;\Omega).
		$$
Denote by $\mu$ the perimeter measure $\mu(A):=|D\chi_E|(A)$ for any Borel set $A\subset\Omega$, and notice
that for any $x_0\in{\mathcal F} E$ there exists a rotation $R_{x_0}\in SO(N)$ such that the blow-up 
$\frac{E-x_0}{r}$ locally converges in measure to $R_{x_0}H$ as $r\to 0^+$ and
		\begin{equation}
		\label{eq:degiorgi}
		\lim_{r\to 0^+}\frac{\mu(x_0+rR_{x_0}Q)}{r^{N-1}}=1.
		\end{equation}
		Now, for $C\in\mathcal{C}, C\subseteq\Omega$ we set
		$$
		\alpha_n(C):=(1-s_n)P^{\gamma,L}_{s_n}(E_n;C)\quad\text{and}\quad\alpha(C):=
		\liminf_{n\to\infty}\alpha_n(C).
		$$
		We set $C_r(x_0):=x_0+rR_{x_0}Q$ and define the measure
		$$
		\nu(F):=\int_F\gamma(x)d\mu(x),\quad\forall F\quad\text{Borel set}.
		$$
		We claim that for $\mu$-a.e. $x_0\in\R^N$ it holds
		\begin{equation}
		\label{eq:claimliminf}
		\frac{\sqrt 2}{\pi}\le\liminf_{r\to 0^+}\frac{\alpha(C_r(x_0))}{\nu(C_r(x_0))}.
		\end{equation}
		Indeed, if \eqref{eq:claimliminf} is true, the family
		$$
		\mathcal{A}_\eps:=\left\{C_r(x_0)\subset\Omega:\frac{\sqrt 2}{\pi} \nu(C_r(x_0))\le(1+\eps)\alpha(C_r(x_0))\right\}
		$$
		is a fine covering of $\mu$-almost all of $\Omega$ and using a suitable variant of Vitali's 
		covering Theorem as done in \cite{DL} we get
		$$
		\frac{\sqrt 2}{\pi}P^\gamma(E;\Omega)\le(1+\eps)
		\liminf_{n\to\infty}(1-s_n)P^{\gamma,L}_{s_n}(E_n;\Omega).
		$$
Notice that in the right-hand side of \eqref{eq:claimliminf} we have the Radon-Nikodym derivative of 
$\alpha$ with respect to $\nu$.

Since $\eps>0$ is arbitrary, the $\Gamma-\liminf$ inequality follows.
Therefore we reduce ourselves to proving \eqref{eq:claimliminf}. For $x_0\in{\mathcal F} E$, because of the continuity of the density we have 
		\begin{equation}
		\label{eq:average}
		\lim_{r\to 0^+}\fint_{C_r(x_0)}\gamma(x)d\mu(x)=\gamma(x_0).
		\end{equation}
		Then, it suffices to show that
		$$
		\liminf_{r\to 0^+}\frac{\alpha(C_{r}(x_0))}{r^{N-1}}\ge \frac{\sqrt 2}{\pi}\gamma(x_0).
		$$
		From now on, since $x_0\in {\mathcal F} E$ is arbitrary we assume that $R_{x_0}=I$, so $C_r(x_0)=x_0+rQ$. Let us choose a sequence $r_k$ of radii $r_k\rightarrow 0$ such that
		$$
		\liminf_{r\to 0^+}\frac{\alpha(C_r(x_0))}{r^{N-1}}=\lim_{k\to\infty}\frac{\alpha(C_{r_k}(x_0))}{r_k^{N-1}}.
		$$
		For $k>0$ we choose $i(k)$ large enough such that the following conditions hold
		$$
		\begin{cases*}
			\alpha_{i(k)}(C_{r_k}(x_0))\le \alpha(C_{r_k}(x_0))+r_k^N \\
		1-s_{i(k)}\leq r_k^{N} \\
			\fint_{C_{r_k}(x_0)}|\chi_{E_{i(k)}}-\chi_E|dx<\frac 1k.
		\end{cases*}
		$$
Hence we have
\[
\begin{split}
    \alpha (C_{r_k}(x_0))\geq \alpha_{i(k)}(C_{r_k}(x_0)) -r_k^{N}
    =(1-s_{i(k)})P^{\gamma,L}_{s_{i(k)}}(E_{i(k)};x_0+ r_k Q)-r_k^N.
\end{split}
\] 
Let us fix $\delta>0$. Recalling that the halfspace $H$ passes through the origin, (hence $\beta H=H$ for any $\beta>0$), and using Proposition \ref{prop:gluing} with $F_{i(k)}=E_{i(k)}$ in $x_0 +r_k(Q\setminus Q^-_{\delta})$, 
$F_{i(k)}=x_0+H$ in $x_0+ r_k Q^-_{\delta/2}$ and $\delta_1=\delta,\, \delta_2=\delta/2$ and $\eps= \delta/4$, we have that for any $k\in \mathbb N$ there exists a set $F_{i(k)}$ such that
\begin{equation}
\label{eq:lastinequality}
\begin{split}
  P^{\gamma,L}_{s_{i(k)}}(F_{i(k)};x_0+ r_k Q) \leq &
P^{\gamma,L}_{s_{i(k)}}(E_{i(k)};x_0+ r_k Q)+P^{\gamma,L}_{s_{i(k)}} (x_0+H;x_0+ r_k (Q^-_{5\delta/4 })) 
\\
&+2^{N+1}\tilde{K}_{s_{i(k)}}\left(\frac{\delta}{4}\right)+C_1( \delta)\|\chi_{E_{i(k)}}-\chi_H\|_{L_\lambda^1(x_0+r_k Q)}\\
& +\frac{C_2(\delta)}{1-s_{i(k)}} 
\|\chi_{E_{i(k)}}-\chi_H\|_{L_\lambda^1(x_0+r_k(Q^-_\delta\setminus Q^-_{\delta/2}))}.
\end{split}
\end{equation}
Multiplying both sides of \eqref{eq:lastinequality} by $1-s_{i(k)}$ we have that for $k$ large enough
\begin{equation} \label{eq:stimadalbasso}
\begin{split}
 (1-s_{i(k)}) P^{\gamma,L}_{s_{i(k)}}(F_{i(k)};x_0+ r_k Q)\leq& 
(1-s_{i(k)})(P^{\gamma,L}_{s_{i(k)}}(E_{i(k)};x_0+ r_k Q))\\
&+(1-s_{i(k)})P^{\gamma,L}_{s_{i(k)}} (x_0+H;x_0+ r_k (Q^-_{5\delta/4 }))\\&+
\frac {C_2(\delta) r_k^{N}}{k}+(1-s_{i(k)})2^{N+1}\tilde{K}_{s_{i(k)}}\left(\frac{\delta}{4}\right).
\end{split}
\end{equation}
With an argument similar to the one in Lemma \ref{lem:upest}
we can prove that
$$
(1-s_{i(k)})P^{\gamma,L}_{s_{i(k)}} (x_0+H;x_0+ r_k (Q^-_{5\delta/4})) \leq C\delta e^{-\frac{|x_0|^2}{2}}r_k^{N-1}.
$$
Let us focus on the left hand side of \eqref{eq:stimadalbasso}.
Using the isoperimetric inequality for the fractional Gaussian perimeter in analytic form \eqref{eq:gaussisopine} we have 
$$
\frac{P^\gamma_{s_{i(k)}}(F_{i(k)})}{I_{s_{i(k)}}(\gamma(F_{i(k)}))}
\geq \frac {P^\gamma_{s_{i(k)}}(x_0+H)}{I_{s_{i(k)}}(\gamma(x_0+H))}.
$$
It is easy to prove that the function $I_s$ is Lipschitz. Therefore, we have
$$
P^\gamma_{s_{i(k)}}(F_{i(k)}) \geq (1-C\gamma (F_{i(k)} \triangle (x_0+H)))P^\gamma_{s_{i(k)}}(x_0+H) \geq (1-Cr_k^N)P^\gamma_{s_{i(k)}}(x_0+H).
$$
Notice that this immediately implies
\[
\begin{split}
&(1-s_{i(k)})(P^{\gamma,L}_{s_{i(k)}}(F_{i(k)}; x_0+r_k Q) -P^{\gamma,L}_{s_{i(k)}}(x_0+H; x_0+r_k Q))
\\
&\geq  (1-s_{i(k)})(P^{\gamma,NL}_{s_{i(k)}}(x_0+H; x_0+r_k Q)-P^{\gamma,NL}_{s_{i(k)}}(F_{i(k)}; x_0+r_k Q)) - r_k.  \end{split}
\]
We can prove that the difference between the nonlocal terms goes to zero, following \cite[Lemma 14]{AmDeMa} 
and using \eqref{eq:upperadialestimate}, while for the other terms (note that we need to divide by 
$r_k^{N-1}$) we also note that it behaves as $r_k^N +\delta r_k^{N-1}$. 
Thus using Lemma \ref{lem:loest} we have
$$
(2\pi)^{-\frac{N-1}{2}}\lim_{k\to\infty} \frac{\alpha (C_{r_k})}{r_k^{N-1}}
\geq (2\pi)^{-\frac{N-1}{2}} \lim_{k\to\infty} 
\frac{(1-s_{i(k)})P^{\gamma,L}_{s_{i(k)}}(x_0+H; x_0+r_k Q)}{r_k^{N-1}}-\delta
\geq \frac{\sqrt 2}{\pi} e^{-\frac{|x_0|^2}{2}}-\delta.
$$
Since $\delta$ is arbitrary we get inequality \eqref{eq:gammaliminf}.
\end{proof}

\begin{proof}{\textbf{Limsup inequality.}}
It is enough to prove the $\Gamma-\limsup$ inequality for a collection $\mathcal{B}$ of sets of finite Gaussian perimeter which is dense in energy, i.e., such that for every set $E$ of finite Gaussian perimeter there exists a sequence $(E_k)\subset\mathcal{B}$ with $\chi_{E_k}\to\chi_E$ in $L^1_{\rm loc}(\R^N)$ as $k\to\infty$ and $\limsup_k P^\gamma(E_k;\Omega)=P^\gamma(E;\Omega)$. Following the ideas in \cite[Section 3.2]{AmDeMa}, we consider as $\mathcal{B}$ the collection of polyhedra $\Pi\subset\R^N$ satisfying $P^\gamma(\Pi;\partial\Omega)=0$ as in item (iii) of Proposition \ref{pro:transversality}.
Notice that the transversality condition $P^\gamma(\Pi;\partial\Omega)=0$ in the definition of $\mathcal{B}$
is equivalent to
\begin{equation}\label{eq:transdelta}
\lim_{\delta\to0^+}P^\gamma(\Pi;\Omega_\delta^+\cup\Omega_\delta^-)=0,
\end{equation}
where $\Omega_\delta^+$ and $\Omega_\delta^-$ are defined in \eqref{eq:omegadeltapiumeno}.
Now, given a polyhedron $\Pi\in\mathcal{B}$ and $\delta>0$, we prove that  
\begin{align*}
&\lim_{s\to 1^-}(1-s)P_s^{\gamma,L}(\Pi;\Omega)=\frac{\sqrt{2}}{\pi} P^\gamma(\Pi;\Omega),
\\
&\lim_{s\to 1^-}(1-s)P_s^{\gamma,NL}(\Pi;\Omega)=\frac{2\sqrt{2}}{\pi} P^\gamma(\Pi;\Omega_\delta^+\cup\Omega_\delta^-).
\end{align*}
Passing to the limit as $\delta\to 0^+$ the transversality condition \eqref{eq:transdelta} provides the required inequality. We divide the proof in two steps.
\\
\textbf{Step 1.} Estimate of $P_s^{\gamma,L}(\Pi;\Omega)$. Let us fix $r > 0$ and set
$$
(\partial\Pi)_{r/2}:=\left\{x\in\R^N:d(x,\partial\Pi)<r/2\right\},
\qquad(\partial\Pi)^-_{r/2}:=(\partial\Pi)_{r/2}\cap\Pi.
$$
We can find $N_r$ disjoint cubes of side $r$, say $Q_r(x_i)$ ($i=1,\ldots,N_r$), such that
\begin{itemize}
\item[(i)] $Q_r(x_i)\subset(\partial\Pi)_{r/2}$, $x_i\in\partial\Pi$ and the faces of 
$Q_r(x_i)$ are either parallel or orthogonal to the face of $\partial\Pi$ where $x_i$ lies;
\item[(ii)] any cube $Q_r(x_i)$ intersects exactly one face of $\partial\Pi$ and its distance by the other faces of $\partial\Pi$ is larger than $r/2$;
\item[(iii)] the $N_r$ intersections $D_r(x_i):=Q_r(x_i)\cap\Sigma$ are $(N-1)$-dimensional cubes that satisfy
$$\mathcal{H}^{N-1}_\gamma\left((\partial\Pi\cap\Omega)\setminus\bigcup_{i=1}^{N_r}D_r(x_i)\right)=P^\gamma(\Pi;\Omega)-\sum_{i=1}^{N_r}\mathcal{H}^{N-1}_\gamma(D_r(x_i))\to 0$$
as $r\to 0^+$.
\end{itemize}

\begin{figure}[h]
\centering
\includegraphics[width=0.6\textwidth]{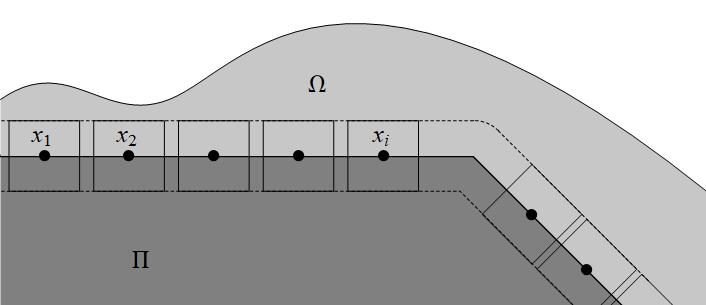}
\caption{A possible collection of cubes satisfying (i), (ii) and (iii).}
\end{figure}
Let us notice that
$$
\left|\mathcal{H}^{N-1}_\gamma(D_r(x_i))-\frac{1}{(2\pi)^{N-1}}r^{N-1}e^{-\frac{|x_i|^2}{2}}
\right| \leq Cr
$$
for some positive constant $C$ independent of $i$.
From now on, for any face $\Sigma$ of $\partial\Pi$, we denote by $\Sigma^+$ and $\Sigma^-$ the two parts of the strip determined by $\Sigma$ lying, respectively, by the side of the outer and the inner normal to $\Sigma$.

We proceed by splitting the integral giving $P_s^{\gamma,L}(\Pi;\Omega)$ in three parts:
\begin{align*}
P_s^{\gamma,L}(\Pi;\Omega)=&\int_{\Pi\cap\Omega}d\gamma(x)\int_{\Pi^c\cap\Omega}K_s(x,y)d\gamma(y)
\\
=&\underbrace{\int_{(\Pi\cap\Omega)\setminus(\partial\Pi)^-_{r/2}}d\gamma(x)\int_{\Pi^c\cap\Omega}K_s(x,y)d\gamma(y)}_{(A)}\\
&+\underbrace{\int_{(\Pi\cap\Omega)\cap\bigcup_{i=1}^{N_r}Q_r(x_i)}d\gamma(x)\int_{\Pi^c\cap\Omega}K_s(x,y)d\gamma(y)}_{(B)}\\
&+\underbrace{\int_{(\Pi\cap\Omega)\cap\left((\partial\Pi)^-_{r/2}\setminus\bigcup_{i=1}^{N_r}Q_r(x_i)\right)}d\gamma(x)\int_{\Pi^c\cap\Omega}K_s(x,y)d\gamma(y)}_{(C)}.
\end{align*}

\begin{figure}[h]
\centering
\includegraphics[width=\textwidth]{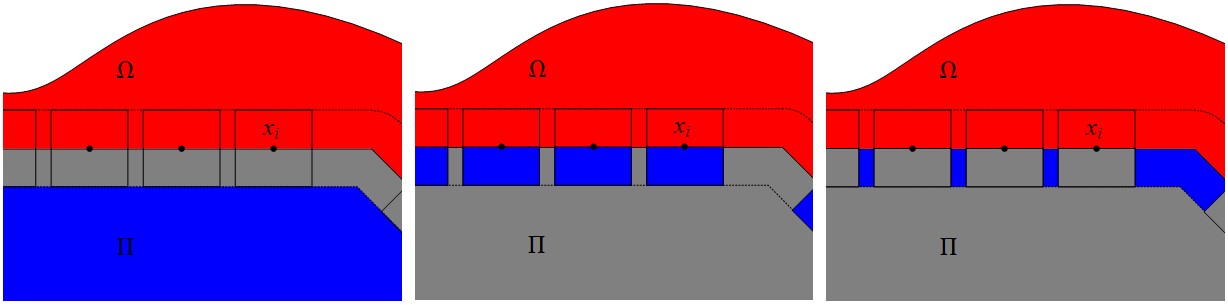}
\caption{Cases (A), (B) and (C). The $x$ variable is in blue, the $y$ variable is in red.}
\end{figure}
\begin{itemize}
\item[(A)] We notice that, for any $x\in(\Pi\cap\Omega)\setminus(\partial\Pi)^-_{r/2}$ and $y\in\Pi^c\cap\Omega$, $|x-y|\ge r/2$. Then, by recalling the upper estimate on the kernel $K_s(x,y)$ in Lemma \ref{lem:upasest} it holds
\begin{align*}
\int_{(\Pi\cap\Omega)\setminus(\partial\Pi)^-_{r/2}}d\gamma(x)&\int_{\Pi^c\cap\Omega}K_s(x,y)d\gamma(y)\\
\le&\int_{(\Pi\cap\Omega)\setminus(\partial\Pi)^-_{r/2}}\:d\lambda(x)\int_{\Pi^c\cap\Omega}\tilde{K}_s\left(\frac r2\right)\:d\lambda(y)\le C_{N,r}<\infty,
\end{align*}
where $\lambda$ is the measure defined in \eqref{deflambda} and the constant $C_{N,r}$ can be estimated as 
the integrals with respect to the variable $t$ in Lemmas \ref{lem:upest} and \ref{lem:loest}.
\item[(B)] Let us estimate separately
$$
\int_{Q_r^-(x_i)}d\gamma(x)\int_{((\Pi^c\cap\Omega)\setminus(\partial\Pi)_{r/2}}K_s(x,y)d\gamma(y)
$$
and
$$
\int_{Q_r^-(x_i)}d\gamma(x)\int_{((\Pi^c\cap\Omega)\cap(\partial\Pi)_{r/2}}K_s(x,y)d\gamma(y).
$$
The first integral can be estimated from above as in case (A) by a positive constant $C_{N,r}$ depending only on $N$ and $r$, since $|x-y|\ge r/2$. 
In order to estimate the second integral, we can observe that, in view of (ii), if $Q_r(x_i)$ intersects 
the face $\Sigma\subset\partial\Pi$, then the contribution of 
$(\Pi^c\cap\Omega\cap(\partial\Pi)_{r/2})\setminus\Sigma^+$ to the integral (A) is again estimated from above by $C_{N,r}$ (the distance between $x$ and $y$ is larger than $r/2$). Then, it remains to estimate
$$
\int_{Q_r^-(x_i)}d\gamma(x)\int_{\Sigma^+\cap(\partial\Pi)_{r/2}}K_s(x,y)d\gamma(y).
$$
Assuming for simplicity that $\Sigma$ lies in a hyperplane $x_N=c$, by repeating the same computations 
as in Lemma \ref{lem:upest}, we obtain 
\begin{align*}
&\int_{Q_r^-(x_i)}d\gamma(x)\int_{\Sigma^+\cap(\partial\Pi)_{r/2}}K_s(x,y)d\gamma(y)\\
&\le r^{N-1}e^{-\frac{|x_i|^2}{2}}(1+Cr)\frac{r^2}{2}
\int_0^{\infty}\frac{1}{(1-e^{-2t})^{1/2}}\exp\left(-\frac{|(x_i)_N|^2}{1+e^{-t}}\right)
\frac{dt}{t^{\frac{s}{2}+1}}\\ 
&\quad\cdot\int_0^{1/2}dy_N\int_{0}^{1/2}\exp\left(-r^2\frac{|x_N+y_N|^2}{2(1-e^{-2t})}\right)\:dx_N\\
&\le\frac{1}{s(1-s)2^{\frac{N-1-s}{2}}\pi^{\frac{N+1}{2}}}r^{N-1}e^{-\frac{|x_i|^2}{2}}(1+Cr).
\end{align*}
\item[(C)] Let us set
$\partial\Pi\cap\Omega=\bigcup_j\Sigma_j$, where $\Sigma_j$ is the intersection of a face of $\partial\Pi$ with $\Omega$. Let us denote by $\pi_j$ the hyperplane containing $\Sigma_j$ and let $\pi_j^-$ and $\pi_j^+$ the halfspaces determined by $\pi_j$ and by the inner and the outer normal vector to $\Pi$, respectively. 
Let us consider the set
$$
(\partial\Pi)^-_{r/2,j}:=\left\{x\in(\partial\Pi)^-_{r/2}\cap\pi_j^-:d(x,\pi_j)<r/2\right\}.
$$
Notice that $(\partial\Pi)^-_{r/2}=\bigcup_j(\partial\Pi)^-_{r/2,j}$ and that, if 
$i\neq j$, $(\partial\Pi)^-_{r/2,i}$ and $(\partial\Pi)^-_{r/2,j}$ have non empty intersection only near the edges of $\Pi$.

Let now $\Sigma_{r/2,j}$ be the projection of $(\partial\Pi)^-_{r/2,j}$ onto $\pi_j$. Notice that 
$\mathcal{H}^{N-1}_\gamma(\Sigma_{r/2,j})\le\mathcal{H}^{N-1}_\gamma(\Sigma_j)+Cr$ for $r$ sufficiently small. We thus infer, by Lemma \ref{lem:upest} and the following Remark \ref{rem:upest}
\begin{align*}
&\int_{(\partial\Pi)^-_{r/2}\setminus\bigcup_{i=1}^{N_r}Q_r(x_i)}d\gamma(x)\int_{\Pi^c\cap\Omega}K_s(x,y)d\gamma(y)\\
&\le\sum_j\int_{(\partial\Pi)^-_{r/2,j}\setminus\bigcup_{i=1}^{N_r}Q_r(x_i)}d\gamma(x)\int_{\Pi^c\cap\Omega}K_s(x,y)d\gamma(y)+C_{N,r}\\
&\le\sum_j\int_{(\partial\Pi)^-_{r/2,j}\setminus\bigcup_{i=1}^{N_r}Q_r(x_i)}d\gamma(x)\int_{\pi_j^+}K_s(x,y)d\gamma(y)+C_{N,r}\\
&\le\frac{2^{\frac s2}}{\pi(1-s)}\sum_j\mathcal{H}^{N-1}_\gamma\left(\Sigma_{r/2,j}\setminus\bigcup_{i=1}^{N_r}D_r(x_i)\right)(1+Cr)+C_{N,r}\\
&\le\frac{2^{\frac s2}}{\pi(1-s)}\mathcal{H}^{N-1}_\gamma\left((\partial\Pi\cap\Omega)\setminus\bigcup_{i=1}^{N_r}D_r(x_i)\right)(1+Cr)+C_{N,r}
\end{align*}
where the constant $C_{N,r}$ estimates the integrals with $|x-y|\ge r/2$ and it possibly changes on each line.
\end{itemize}
By putting together the estimates (A), (B), (C) and by summing the $N_r$ contributes of the cubes $Q_r(x_i)$ we finally get
\begin{equation}\label{eq:stripes}
\begin{split}
&(1-s)P^{\gamma,L}_s(\Pi;\Omega)=(1-s)\int_{\Pi\cap\Omega}d\gamma(x)\int_{\Pi^c\cap\Omega}K_s(x,y)d\gamma(y)
\\
&\le\frac{2^{\frac{s}{2}}}{\pi}\left[\mathcal{H}^{N-1}_\gamma\left((\partial\Pi\cap\Omega)\setminus\bigcup_{i=1}^{N_r}D_r(x_i)\right)+\sum_{i=1}^{N_r}\mathcal{H}^{N-1}_\gamma(D_r(x_i))\right]
+Cr+(1-s)C_{N,r}\\
&=\frac{2^{\frac{s}{2}}}{\pi}P^\gamma(\Pi;\Omega)+Cr+(1-s)C_{N,r}.
\end{split}
\end{equation}
We conclude the proof of this step by letting $s\to 1^-$ and considering the arbitrariness of $r$.

\textbf{Step 2.} Estimate of $P^{\gamma,NL}_s(\Pi;\Omega)$. We have
$$
P^{\gamma,NL}_s(\Pi;\Omega)=\int_{\Pi\cap\Omega}d\gamma(x)\int_{\Pi^c\cap\Omega^c}K_s(x,y)d\gamma(y)
+\int_{\Pi\cap\Omega^c}d\gamma(x)\int_{\Pi^c\cap\Omega}K_s(x,y)d\gamma(y).
$$
Let us fix $\delta>0$ and consider the sets $\Omega^+_\delta$ and $\Omega^-_\delta$. We first estimate
$$
\int_{\Pi\cap\Omega}d\gamma(x)\int_{\Pi^c\cap\Omega^c}K_s(x,y)d\gamma(y)
$$
by splitting in different cases
\begin{itemize}
\item[(A)] $x\in(\Pi\cap\Omega)\setminus\Omega^-_\delta$, $y\in(\Pi^c\cap\Omega^c)\setminus\Omega^+_\delta$, 
\item[(B)] $x\in(\Pi\cap\Omega)\setminus\Omega^-_\delta$, $y\in\Pi^c\cap\Omega^+_\delta$,
\item[(C)] $x\in\Pi\cap\Omega^-_\delta$, $y\in(\Pi^c\cap\Omega^c)\setminus\Omega^+_\delta$, 
\item[(D)] $x\in\Pi\cap\Omega^-_\delta$, $y\in\Pi^c\cap\Omega^+_\delta$.
\end{itemize}

\begin{figure}[h]
\centering
\includegraphics[width=\textwidth]{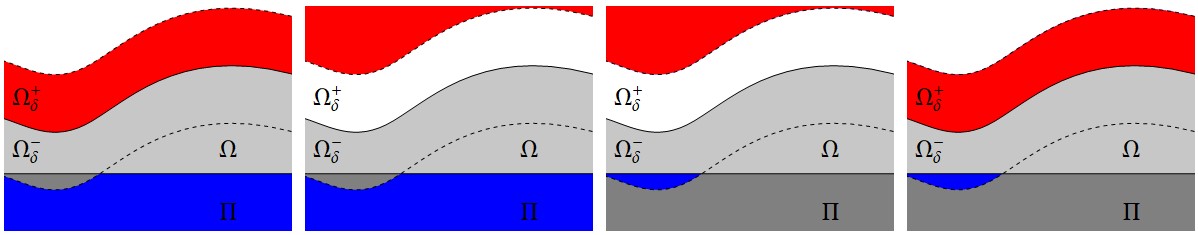}
\caption{Cases (A), (B), (C) and (D). The $x$ variable is in blue, the $y$ variable is in red.}
\end{figure}

Cases (A), (B) and (C) can be treated together, since in that cases $K_s(x,y)$ is uniformly bounded 
(the distance between $x$ and $y$ is larger than $\delta$) by a positive constant $C_{N,\delta}$ depending only on $N$ and $\delta$; in other 
words
\begin{equation}\label{eq:pnl1}
\int_{(\Pi\cap\Omega\times\Pi^c\cap\Omega^c)\setminus((\Pi\cap\Omega^-_\delta)\times(\Pi^c\cap\Omega^+_\delta))}K_s(x,y)d\gamma(x)d\gamma(y)\le C_{N,\delta}.
\end{equation}
In the case (D), we have
\begin{equation}\label{eq:pnl2}
\begin{split}
&\int_{\Pi\cap\Omega^-_\delta}d\gamma(x)\int_{\Pi^c\cap\Omega^+_\delta}K_s(x,y)d\gamma(y)
\\
&\le\int_{\Pi\cap(\Omega^-_\delta\cup\Omega^+_\delta)}d\gamma(x)
\int_{\Pi^c\cap(\Omega^-_\delta\cup\Omega^+_\delta)}K_s(x,y)d\gamma(y)
\le P^{\gamma,L}_s(\Pi;\Omega^-_\delta\cup\Omega^+_\delta).
\end{split}
\end{equation}
By summing \eqref{eq:pnl1} and \eqref{eq:pnl2} and multiplying by $(1-s)$ we get
\begin{equation}\label{eq:pnlfinal}
\begin{split}
&(1-s)\int_{\Pi\cap\Omega}d\gamma(x)\int_{\Pi^c\cap\Omega^c}K_s(x,y)d\gamma(y)
\\
&\le (1-s)C_{N,\delta}+(1-s)P^{\gamma,L}_s(\Pi;\Omega^-_\delta\cup\Omega^+_\delta)
\\
&\le (1-s)C_{N,\delta}+\frac{2^{\frac{s}{2}}}{\pi}P^\gamma(\Pi;\Omega^-_\delta\cup\Omega^+_\delta),
\end{split}
\end{equation}
where the last inequality is a consequence of the first step with $\Omega^-_\delta\cup\Omega^+_\delta$
in place of $\Omega$. By switching $\Pi$ and $\Pi^c$ in \eqref{eq:pnlfinal} and summing up the 
two contributions, we get the thesis.
\end{proof}

\section{Convergence of local minimisers}\label{sec:convoflocmin}

We begin this section generalising \cite[Proposition 16]{AmDeMa} to the radial kernel $\tilde{K}_s$ defined 
in Lemma \ref{lem:upasest}. 

\begin{proposition}
	\label{prop:analogpropsedici}
	Let $\Omega\subseteq\R^N$, $u\in BV(\Omega)$ and $\Omega'\Subset\Omega$. If we set
	$$
	\mathcal{F}_{\tilde{K}_s}(u;\Omega'):=\int_{\Omega'}dx\int_{\Omega'}|u(x)-u(y)|
	\tilde{K}_s(|x-y|)e^{-\frac{|x-y|^2}{4}}dy,
	$$
	we have 
	\begin{equation}
	\label{eq:claimpropsedici}
	\limsup_{s\to 1^-}(1-s)\mathcal{F}_{\tilde{K}_s}(u;\Omega')\le 
	C_N\limsup_{|h|\to 0^+}\int_{\Omega'}\frac{|u(x+h)-u(x)|}{|h|}dx\le 
	C_N|Du|(\Omega).
	\end{equation}
\end{proposition}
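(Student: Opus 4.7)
The plan is to adapt the proof of \cite[Proposition 16]{AmDeMa} to the radial kernel $\tilde K_s$, exploiting its summability properties from Lemma \ref{lem:upasest}. The second inequality in \eqref{eq:claimpropsedici} is the standard BV translation estimate: for $|h|<\mathrm{dist}(\Omega',\partial\Omega)$ one has $\int_{\Omega'}|u(x+h)-u(x)|\,dx\le |h|\,|Du|(\Omega)$, so dividing by $|h|$ and taking $\limsup_{|h|\to 0^+}$ yields the bound. The content is therefore the first inequality.

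For that, I would apply Tonelli and the change of variable $h=y-x$ to obtain
\begin{equation*}
\mathcal{F}_{\tilde K_s}(u;\Omega')\le\int_{\R^N}\tilde K_s(|h|)\,e^{-|h|^2/4}\Bigl(\int_{\Omega'}|u(x+h)-u(x)|\,dx\Bigr)\,dh,
\end{equation*}
and split the $h$-integration at some $\eta\in(0,\mathrm{dist}(\Omega',\partial\Omega))$. On $B_\eta$, setting $\omega(\eta):=\sup_{0<|h|\le\eta}\|u(\cdot+h)-u\|_{L^1(\Omega')}/|h|$, the inner integral is bounded by $\omega(\eta)|h|$, reducing matters to estimating $(1-s)\int_{B_\eta}|h|\,\tilde K_s(|h|)e^{-|h|^2/4}\,dh$. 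On $B_\eta^c$, I would use $\int_{\Omega'}|u(x+h)-u(x)|\,dx\le 2\|u\|_{L^1(\Omega')}$: since for $|h|\ge\eta$ the exponential factor $\exp(-e^t|h|^2/(2(e^{2t}-1)))$ inside $\tilde K_s$ kills the $t\to 0^+$ singularity uniformly in $s$, the quantity $\tilde K_s(|h|)e^{-|h|^2/4}$ is bounded uniformly in $s\in(0,1)$ on $|h|\ge\eta$, so its integral over $B_\eta^c$ is finite uniformly in $s$ and the prefactor $(1-s)$ makes this outer contribution vanish as $s\to 1^-$.

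The key computation is then to show that $\limsup_{s\to 1^-}(1-s)\int_{B_\eta}|h|\tilde K_s(|h|)e^{-|h|^2/4}\,dh\le C_N$ with $C_N$ finite and independent of $\eta$. Passing to polar coordinates and applying Tonelli yields
\begin{equation*}
N\omega_N(1-s)\int_0^\infty\frac{dt}{t^{s/2+1}(1-e^{-2t})^{N/2}}\int_0^\eta\rho^N\exp\!\bigl(-A_t\rho^2\bigr)\,d\rho,\qquad A_t:=\frac{e^t}{2(e^{2t}-1)}+\frac14,
\end{equation*}
and extending the inner integral to $(0,\infty)$ gives the closed form $\tfrac12\Gamma(\tfrac{N+1}2)A_t^{-(N+1)/2}$. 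An asymptotic analysis along the lines of Lemma \ref{lem:upasest} shows that the $t$-integrand decays like $t^{-s/2-1}$ at infinity and behaves like $c_N\,t^{(1-s)/2-1}$ as $t\to 0^+$, so the pole at $t=0$ produces exactly a $1/(1-s)$ factor which cancels against the prefactor and leaves a finite bound $C_N>0$. Combining this with the two previous bounds and sending $s\to 1^-$ first, then $\eta\to 0^+$, the quantity $\omega(\eta)$ tends to $\limsup_{|h|\to 0^+}\int_{\Omega'}|u(x+h)-u(x)|/|h|\,dx$, yielding \eqref{eq:claimpropsedici}.

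The main obstacle I foresee lies in the uniform-in-$s$ control of $\tilde K_s(\rho)e^{-\rho^2/4}$ for $\rho\ge\eta$: while the blow-up of $\tilde K_s$ as $s\to 1^-$ originates entirely from the $t\to 0^+$ behaviour of the defining integrand, one must verify that the Gaussian damping at $\rho\ge\eta>0$ suppresses it uniformly in $s$, which amounts to a routine but careful splitting of the $t$-integral defining $\tilde K_s(\rho)$ into a neighbourhood of $t=0$ (where the exponential factor beats any power of $t$) and its complement (where the weight $t^{-s/2-1}$ is already bounded).
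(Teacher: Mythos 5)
Your proposal is correct and follows essentially the same route as the paper: split the $h$-integration at a small radius, control the near-field by the difference quotient times the moment $(1-s)\int_{B_\eta}|h|\tilde K_s(|h|)e^{-|h|^2/4}\,dh$, whose $t\to 0^+$ pole of order $1/(1-s)$ cancels the prefactor (the paper does this by reusing \eqref{eq:stimagrande}--\eqref{eq:stimapiccola} with $\alpha=1$ after bounding $e^{-|h|^2/4}\le 1$), and show the far-field term is uniformly bounded in $s$ so that the factor $(1-s)$ kills it. Your parametrization via $\omega(\eta)$ and $\eta\to 0^+$ versus the paper's choice of $L>\limsup g$ and $\delta_L$ is only a cosmetic difference.
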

\begin{proof}
Observe immediately that the second inequality is well known, see e.g. \cite[Remark 3.25]{AFP}, hence the
central $\limsup$ is finite. For $h\in\R^N$ we define
	$$
	g(h):=\int_{\Omega'}\frac{|u(x+h)-u(x)|}{|h|}dx
	$$
	and fix $L>\limsup_{|h|\to 0^+}g(h)$. Then there exists $\delta_L>0$ such that $\Omega'+h\subset\Omega$ for all $h\in B_{\delta_L}$ and 
	\begin{equation}
	\label{eq:dav}
	L\ge g(h)\quad\text{for any}\quad 0<|h|\le\delta_L.
	\end{equation}
	Multiplying both sides of \eqref{eq:dav} by $|h|e^{-\frac{|h|^2}{4}}\tilde{K}_s(|h|)$ and integrating 
	with respect to the variable $h$ on $B_{\delta_L}$ we have
	\begin{align}
	\label{eq:trentacinque}
	L\int_{B_{\delta_L}}&|h|\tilde{K}_s(|h|)e^{-\frac{|h|^2}{4}}dh\ge
	\int_{B_{\delta_L}}g(h)|h|\tilde{K}_s(|h|)e^{-\frac{|h|^2}{4}}dh
	\\ \nonumber
	&=
	\int_{B_{\delta_L}}e^{-\frac{|h|^2}{4}}dh\int_{\Omega'}|u(x+h)-u(x)|\tilde{K}_s(|h|)dx.
	\end{align}
	Moreover, summing up estimates \eqref{eq:stimapiccola} and \eqref{eq:stimagrande} with $\alpha=1$ and $R_\Omega=\delta_L$, we have 
	\begin{equation}
	\label{eq:trentacinqueprimo}
	\begin{split}
	&LN\omega_N\left(\frac{\delta_L^{N+1}}{N+1}\frac{2}{s(1-e^{-2})}+
	2^{\frac{N+3}{2}}\Gamma\left(\frac{N+1}{2}\right)\frac{e^{\frac{N+1}{2}}}{1-s}\right)
\\
&\geq LN\omega_N\int_0^{\delta_L}r^N\tilde{K}_s(r)dr=L\int_{B_{\delta_L}}|h|\tilde{K}_s(|h|)dh\ge L\int_{B_{\delta_L}}|h|\tilde{K}_s(|h|)e^{-\frac{|h|^2}{4}}dh.
	\end{split}
	\end{equation}
	Now we notice that
	\begin{equation}
	\label{eq:trentasei}
	\begin{split}
	\mathcal{F}_{\tilde{K}_s}(u;\Omega')=&
	\iint_{\{\Omega'\times\Omega'\cap|x-y|\le\delta_L\}}|u(x)-u(y)|\tilde{K}_s(|x-y|)
	e^{-\frac{|x-y|^2}{4}}dxdy
	\\
	&+\iint_{\{\Omega'\times\Omega'\cap|x-y|>\delta_L\}}|u(x)-u(y)|\tilde{K}_s(|x-y|)
	e^{-\frac{|x-y|^2}{4}}dxdy \\
	=&\int_{B_{\delta_L}}e^{-\frac{|h|^2}{4}}dh\int_{\Omega'}|u(x+h)-u(x)|\tilde{K}_s(|h|)dx
	\\
	&+\int_{B^c_{\delta_L}}e^{-\frac{|h|^2}{4}}dh\int_{\Omega'}|u(x+h)-u(x)|\tilde{K}_s(|h|)dx 
	\\
	\leq &\int_{B_{\delta_L}}e^{-\frac{|h|^2}{4}}dh\int_{\Omega'}|u(x+h)-u(x)|\tilde{K}_s(|h|)dx
	\\
	&+2N\omega_N\left\|u\right\|_{L^1(\Omega)}\int_{\delta_L}^{\infty}r^{N-1}\tilde{K}_s(r)
	e^{-\frac{r^2}{4}}dr,
	\end{split}
	\end{equation}
	where the second term in the right-hand side in \eqref{eq:trentasei} is finite thanks to Lemma \ref{lem:upasest}.
	
	To conclude, putting together \eqref{eq:trentacinqueprimo}, \eqref{eq:trentacinque} and \eqref{eq:trentasei}, multiplying by $(1-s)$ and passing to the $\limsup$ for $s\to 1^-$ we obtain
	$$
	C_NL\ge\limsup_{s\to 1^-}(1-s)\mathcal{F}_{\tilde{K}_s}(u;\Omega'),
	$$
	and for $L\rightarrow\limsup_{|h|\to 0^+}g(h)$ we have proved the first inequality in \eqref{eq:claimpropsedici}. 
	\end{proof}

We now prove that if a sequence $(E_n)$ of local minimisers of $P^\gamma_{s_n}(\cdot;\Omega)$ converges to $E$ and 
$s_n\uparrow 1$ then $E$ is a local minimiser of $P^\gamma(\cdot;\Omega)$. Recall that a set $E$ is a local minimiser of $P^\gamma_s(\cdot;\Omega)$ if $P^\gamma_s(E;\Omega)\leq P^\gamma_s(F;\Omega)$ whenever 
$E\triangle F \Subset \Omega$.

\begin{theorem}[\textbf{Convergence of local minimisers}]
	Let $(s_n)_{n\in\N}$ be a sequence in $(0,1)$ such that $s_n\uparrow 1$ and, for any $n\in\N$, let $E_n$ be a local minimiser of $P^\gamma_{s_n}(\cdot;\Omega)$ such that $\chi_{E_n}\rightarrow\chi_E$ in $L^1_{\text{\rm loc}}(\R^N)$. Then
	\begin{equation}
	\label{eq:limsupfinite}
	\limsup_{n\to\infty}(1-s_n)P^\gamma_{s_n}(E;\Omega')<\infty\quad\forall\Omega'\Subset\Omega,
	\end{equation}
	the limit set $E$ is a local minimiser of $P^\gamma(\cdot;\Omega)$ and 
	$(1-s_n)P^\gamma_{s_n}(E_n;\Omega')\rightarrow \frac{\sqrt{2}}{\pi} P^\gamma(E;\Omega')$ as $n\to\infty$ for any $\Omega'\Subset\Omega$ such that $P(E;\partial\Omega')=0$.
\end{theorem}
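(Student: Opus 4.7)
The plan is to follow the standard strategy for passing minimality through $\Gamma$-convergence: first obtain a uniform bound using a polyhedral competitor glued to the minimisers outside a compactly contained subdomain, then combine the $\Gamma$-liminf and $\Gamma$-limsup inequalities from the Main Theorem with the local minimality of the $E_n$ to transfer the minimisation property to $E$, and finally read off the convergence of the values from the sharpness in both $\Gamma$-inequalities. The role of Proposition \ref{prop:gluing} will be to produce modifications that differ from the $E_n$ only in a prescribed region, so that the local minimality inequality applies.

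For the uniform bound, fix $\Omega'\Subset\Omega''\Subset\Omega$ with $P^\gamma(E;\partial\Omega'')=0$. Apply Proposition \ref{pro:transversality} to obtain a polyhedron $\Pi$ close to $E$ in $L^1_\gamma(\Omega'')$ and with $P^\gamma(\Pi;\partial\Omega'')=0$. Using Proposition \ref{prop:gluing} with $E_1=E_n$, $E_2=\Pi$ on an intermediate shell between $\Omega'$ and $\Omega''$, construct a competitor $F_n$ that coincides with $E_n$ outside $\Omega''$ and with $\Pi$ on $\Omega'$; by local minimality, $P^{\gamma}_{s_n}(E_n;\Omega'')\le P^{\gamma}_{s_n}(F_n;\Omega'')$. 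The error terms produced by the gluing are controlled by $\|\chi_{E_n}-\chi_\Pi\|_{L^1_\lambda}$, which is bounded uniformly in $n$ since $\chi_{E_n}\to\chi_E$ in $L^1_{\text{loc}}$, together with the kernel bounds of Lemma \ref{lem:upasest}. Step 1 of the $\Gamma$-limsup proof applied to $\Pi$ on $\Omega''$ then gives $\limsup_n (1-s_n)P^{\gamma}_{s_n}(F_n;\Omega'')<\infty$, yielding \eqref{eq:limsupfinite} and, via the compactness criterion, finiteness of $P^\gamma(E;\Omega')$ for all $\Omega'\Subset\Omega$.

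To prove local minimality of $E$, let $F$ be any measurable set with $E\triangle F\Subset\Omega$ and pick $\Omega'$ with $E\triangle F\Subset\Omega'\Subset\Omega$ and $P^\gamma(E;\partial\Omega')=P^\gamma(F;\partial\Omega')=0$. One now wants a \emph{matched} recovery sequence: by a polyhedral approximation of $F$ as in Proposition \ref{pro:transversality} and a further application of Proposition \ref{prop:gluing} on a thin shell inside $\Omega'$, construct $F_n$ such that $F_n=E_n$ outside $\Omega'$, $\chi_{F_n}\to\chi_F$ in $L^1_{\text{loc}}(\R^N)$, and
\[
\limsup_{n\to\infty}(1-s_n)P^{\gamma}_{s_n}(F_n;\Omega')\le \tfrac{\sqrt{2}}{\pi}P^\gamma(F;\Omega'),
\]
using \eqref{eq:gammalimsup} for the polyhedral approximation and the error estimates in Proposition \ref{prop:gluing} (together with $\chi_{E_n}-\chi_E\to 0$ in $L^1_\lambda$ on the gluing shell) to show that the contribution of the shell is negligible. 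Since $F_n\triangle E_n\Subset\Omega$, local minimality gives $P^{\gamma}_{s_n}(E_n;\Omega')\le P^{\gamma}_{s_n}(F_n;\Omega')$. Passing to the liminf and using the $\Gamma$-liminf inequality \eqref{eq:gammaliminf} on the left (applied on $\Omega'$) yields
\[
\tfrac{\sqrt{2}}{\pi}P^\gamma(E;\Omega')\le \tfrac{\sqrt{2}}{\pi}P^\gamma(F;\Omega'),
\]
and since $E=F$ outside $\Omega'$ this is the desired local minimality of $E$.

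For the convergence of values, fix $\Omega'\Subset\Omega$ with $P^\gamma(E;\partial\Omega')=0$. The $\Gamma$-liminf inequality already gives $\liminf_n(1-s_n)P^{\gamma}_{s_n}(E_n;\Omega')\ge \tfrac{\sqrt{2}}{\pi}P^\gamma(E;\Omega')$. For the reverse inequality, take a recovery sequence $\widetilde{E}_n$ for $E$ matched to $E_n$ outside $\Omega'$ as in the previous paragraph (with $F=E$), so that $\limsup_n(1-s_n)P^{\gamma}_{s_n}(\widetilde{E}_n;\Omega')\le \tfrac{\sqrt{2}}{\pi}P^\gamma(E;\Omega')$; local minimality of $E_n$ gives $P^{\gamma}_{s_n}(E_n;\Omega')\le P^{\gamma}_{s_n}(\widetilde{E}_n;\Omega')$, closing the chain. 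The main difficulty in the whole argument is the \emph{matched} recovery construction in the second step: one must simultaneously approximate $F$ in the interior of $\Omega'$, keep the sequence equal to $E_n$ outside, and estimate the nonlocal interaction across the gluing shell. This is precisely what Proposition \ref{prop:gluing} is engineered for, provided the shell is chosen so that the $L^1_\lambda$ differences $\|\chi_{E_n}-\chi_E\|$ on the shell decay faster than $1-s_n$; this can be ensured by a diagonal choice using the $L^1_{\text{loc}}$ convergence of the $E_n$.
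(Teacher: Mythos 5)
Your overall architecture is the same as the paper's (uniform energy bound; glue a recovery sequence for the competitor $F$ to $E_n$ across a collar inside $\Omega'$; apply the local minimality of $E_n$; conclude with the $\Gamma$-liminf), but the decisive technical point is missing. When you apply Proposition \ref{prop:gluing} with the recovery sequence in the interior and $E_n$ on the collar, item (3) produces, besides the $L^1_\lambda$ error terms, the term $P^{\gamma,L}_{s_n}(E_n;\text{collar})$, and the nonlocal part of the glued competitor (which Proposition \ref{prop:gluing} does not control at all) produces further local energies of $E_n$ on annuli near $\partial\Omega'$. After multiplying by $(1-s_n)$ these terms are only \emph{bounded}, by the uniform bound; nothing in your argument makes them small. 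Controlling $\|\chi_{E_n}-\chi_E\|_{L^1_\lambda}$ on the shell does not control the fractional energy of $E_n$ there, and neither $\Gamma$-inequality yields an upper bound along the \emph{given} sequence $E_n$. The paper resolves exactly this by introducing the monotone set functions $\alpha_n(A)=(1-s_n)P^{\gamma,L}_{s_n}(E_n;A)$, extracting a weak limit $\alpha$ via De Giorgi--Letta, working on balls $B_R$ with $\alpha(\partial B_R)=0$, and letting the collar shrink to $\partial B_R$ (via \cite[Proposition 22]{AmDeMa}); your requirement $P^\gamma(E;\partial\Omega')=0$ is not a substitute, since a priori $\alpha$ need not be absolutely continuous with respect to the perimeter measure of $E$. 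Without this (or an equivalent good-slice/averaging choice of the cutting surface) the chain $\frac{\sqrt 2}{\pi}P^\gamma(E;\Omega')\le\frac{\sqrt 2}{\pi}P^\gamma(F;\Omega')$ does not close, and the same issue blocks your proof of convergence of the values.

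Two further points. First, in the uniform bound you invoke Proposition \ref{pro:transversality} for $E$, which presupposes $P^\gamma(E;\Omega)<\infty$, i.e.\ exactly what is not yet known at that stage, so the step is circular as written (for a mere bound one could glue any fixed competitor, or argue as the paper does, via the comparison $K_s(x,y)\le e^{|x|^2/4}e^{|y|^2/4}\tilde K_s(|x-y|)$ together with Proposition \ref{prop:analogpropsedici}); moreover, even there the nonlocal part of the glued competitor needs the explicit annulus estimates with $\tilde K_{s_n}(R-t)$, $\tilde K_{s_n}(R'-R)$ that the paper carries out and you omit. Second, your closing remark that the $L^1_\lambda$ difference on the shell must decay faster than $1-s_n$, "ensured by a diagonal choice", is both unnecessary and unavailable: after multiplying the minimality inequality by $(1-s_n)$ the factor $\frac{1}{1-s_n}$ in the gluing estimate cancels, so plain convergence to zero on the shell (where $E=F$) suffices; conversely, since $(E_n,s_n)$ is a fixed given sequence, choosing the shell cannot improve its rate of convergence. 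The rate device belongs to the $\Gamma$-liminf proof, where one is free to choose the subsequence $i(k)$ relative to the radii $r_k$, not here.
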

\begin{proof}
	We firstly prove \eqref{eq:limsupfinite}.
	Thanks to Proposition \ref{prop:analogpropsedici} with $u=\chi_{E_n}$ and to  
	\eqref{eq:upperadialestimate}, racalling the measure $\lambda$ defined in \eqref{deflambda} we have 
	$$
	\limsup_{n\to\infty}(1-s_n)P^{\gamma,L}_{s_n}(E_n;\Omega')\le
	\limsup_{n\to\infty}(1-s_n)P^{\lambda,L}_{\tilde{K}_{s_n}}(E_n;\Omega')<\infty
	$$
	and
	$$
	\limsup_{n\to\infty}(1-s_n)P^{\gamma,NL}_{s_n}(E_n;\Omega')\le
	2\limsup_{n\to\infty}(1-s_n)P^\lambda_{\tilde{K}_{s_n}}(\Omega')<\infty.
	$$
	Now we prove the second part of the claim for compactly supported balls $B_R(x)$ in $\Omega$. The 
	extension to general $\Omega'\Subset\Omega$ goes as in \cite{AmDeMa}. Since in the 
	sequel there is no ambiguity, for any $\varrho>0$ we denote $B_\varrho(x)$ simply with $B_\varrho$. 
	Consider the monotone set function $\alpha_n(A)=(1-s_n)P^{\gamma,L}_{s_n}(E_n;A)$ for every open set 
	$	A\subset\Omega$, and extended to any $F$ by 
	$$
	\alpha_n(F):=\inf\{\alpha_n(A);\quad F\subset A\subset\Omega,\quad A\quad \text{open}\}.
	$$
	Clearly, $\alpha_n$ is regular (see definition in \cite[Pag. 23]{AmDeMa}). Thanks to \eqref{eq:limsupfinite} and to 
	the De Giorgi-Letta Theorem (see \cite[Theorem 1.53]{AFP}), the sequence $\alpha_n$ weakly converges to 
	a regular 
	monotone and superadditive set function $\alpha$. We now prove that if $B_R\Subset\Omega$ and 
	$\alpha(\partial B_R)=0$, then $E$ is a local minimiser of the functional $P^\gamma(\cdot;B_R)$ and
	$$
	\lim_{n\to\infty}(1-s_n)P^\gamma_{s_n}(E_n;B_R)=P^\gamma(E;B_R).
	$$
	Indeed, let $F\subset\Omega$ be a Borel set such that $E\triangle F\Subset B_R$; then, there exists $r<R$
	such that $E\triangle F\subset B_r$. By the $\Gamma$-limsup inequality \eqref{eq:gammalimsup} there 
	exists a sequence $(F_n)$ such that
	$$
	\lim_{n\to\infty}|(F_n\triangle F)\cap B_R|=0\quad\text{and}\quad
	\lim_{n\to\infty}(1-s_n)P^\gamma_{s_n}(F_n;B_R)=\frac{\sqrt{2}}{\pi}P^\gamma(F;B_R).
	$$
	According to Proposition \ref{prop:gluing}, given $\varrho$ and $t$ such that $r<\varrho<t<R$, we can find sets $G_n$ such that for any $n\in\N$
	$$
	G_n=E_n\quad\text{in}\quad\R^N\setminus B_t,\quad\quad G_n=F_n\quad\text{in}\quad B_\varrho
	$$
	and for any $\varepsilon>0$ the inequality
	\begin{equation*}
	\begin{split}
	P^{\gamma,L}_s(G_n;B_R)\le& P^{\gamma,L}_s(F_n;B_R)
	+P^{\gamma,L}_s(E_n;B_R\setminus\overline{B}_{\varrho-\varepsilon})+2^{N+1}\tilde{K}_s(\varepsilon) 
	\\
	&+C'(N,R,\delta_1,\delta_2)\left(\frac{c_N}{1-s}
	+\frac{c'(R,N)}{s}\right)\left\|\chi_{E_n}-\chi_{F_n}\right\|_{L^1_\lambda(B_t\setminus B_\varrho)} 
	\\
	&+C'(N,R,\delta_1,\delta_2)\left(\frac{d_N}{2-s}+
	\frac{d'(R,N)}{s}\right)\left\|\chi_{E_n}-\chi_{F_n}\right\|_{L^1_\lambda(B_R)}
	\end{split}
	\end{equation*}
	holds. By the local minimality of $E_n$ we infer
	$$
	P^\gamma_{s_n}(E_n;B_R)\le P^\gamma_{s_n}(G_n;B_R).
	$$
	We now estimate
	$P^{\gamma, NL}_{s_n}(G_n;B_R):=I+II$, see \eqref{defNonlocal}.	We have
	\begin{equation*}
	\begin{split}
	I=&\int_{G_n\cap B_t}d\gamma(y)\int_{E_n^c\cap B_R^c}K_{s_n}(x,y)d\gamma(x)
	+\int_{E_n\cap (B_R\setminus B_t)}d\gamma(y)\int_{E_n^c\cap B_R^c}K_{s_n}(x,y)d\gamma(x)
	\\
	\le& 2^N\tilde{K}_{s_n}(R-t) 
	+\int_{E_n\cap (B_R\setminus B_t)}d\gamma(y)\int_{E_n^c\cap (B_{R'}\setminus B_R)}K_{s_n}(x,y)d\gamma(x)
	\\
	&+\int_{E_n\cap (B_R\setminus B_t)}d\gamma(y)\int_{E_n^c\cap B_{R'}^c}K_{s_n}(x,y)d\gamma(x)
	\\
	\le& 2^N\tilde{K}_{s_n}(R-t) 
	+\int_{E_n\cap (B_{R'}\setminus \overline{B}_t)}d\gamma(y)
	\int_{E_n^c\cap (B_{R'}\setminus \overline{B}_t)}K_{s_n}(x,y)d\gamma(x)
	\\
	&+\int_{E_n\cap (B_R\setminus B_t)}d\gamma(y)\int_{E_n^c\cap B_{R'}^c}K_{s_n}(x,y)d\gamma(x)
	\\
	\le& P^{\gamma,L}_{s_n}(E_n;B_{R'}\setminus \overline{B}_t)
	+2^N\left(\tilde{K}_{s_n}(R-t)+\tilde{K}_{s_n}(R'-R)\right),
	\end{split}
	\end{equation*}
	for any $R'\in(R,d(x,\partial\Omega))$. Since $II$ can be estimated in an analogous way we have 
	$$
	P^{\gamma, NL}_{s_n}(E_n;B_R)\le 2P^{\gamma,L}_{s_n}(E_n;B_{R'}\setminus \overline{B}_t)
	+2^{N+1}\left(\tilde{K}_{s_n}(R-t)+\tilde{K}_{s_n}(R'-R)\right),
	$$
	and so
	$$
	\limsup_{n\to\infty}(1-s_n)P^{\gamma, NL}_{s_n}(E_n;B_R)\le
	2\limsup_{n\to\infty}(1-s_n)P^{\gamma,L}_{s_n}(E_n;B_{R'}\setminus \overline{B}_t).
	$$
	Finally
	\begin{equation}
	\label{eq:trentadue}
	\begin{split}
	\frac{\sqrt{2}}{\pi}P^\gamma(E;B_R)\le&\liminf_{n\to\infty}(1-s_n)P^{\gamma,L}_{s_n}(E_n;B_R)
	\le\liminf_{n\to\infty}(1-s_n)P^\gamma_{s_n}(E_n;B_R) 
	\\
	\le&\liminf_{n\to\infty}(1-s_n)P^\gamma_{s_n}(G_n;B_R) 
	\\
	\le&\liminf_{n\to\infty}(1-s_n)P^{\gamma,L}_{s_n}(G_n;B_R)
	+\limsup_{n\to\infty}(1-s_n)P^{\gamma,NL}_{s_n}(G_n;B_R) 
	\\
	\le&\liminf_{n\to\infty}(1-s_n)P^{\gamma,L}_{s_n}(F_n;B_R)
	+3\limsup_{n\to\infty}(1-s_n)P^{\gamma,L}_{s_n}(E_n;B_{R'}\setminus\overline{B}_{\varrho-\varepsilon}) 
	\\
	&+C\lim_{n\to\infty} |(E_n\triangle F_n)\cap (B_t\setminus B_\varrho)|.
	\end{split}
	\end{equation}
	The last limit is zero, as $E=F$ in $B_t\setminus B_\varrho$ and $|(E_n\triangle E)\cap B_R|\to 0$, 
	$|(F_n\triangle F)\cap B_R|\to 0$ as $n\to\infty$. Using \cite[Proposition 22]{AmDeMa}, and recalling that $\alpha(\partial B_R)=0$, we infer
	$$
	\lim_{R'\to R,\varrho\to R, \varepsilon\to 0}\limsup_{n\to\infty}(1-s_n)P^{\gamma,L}_{s_n}(E_n;B_{R'}\setminus\overline{B}_{\varrho-\varepsilon})=\lim_{\delta\to 0}\limsup_{n\to\infty}\alpha_n(B_{R+\delta}\setminus \overline{B}_{R-\delta})=0,
	$$
	and finally \eqref{eq:trentadue} yields
	$$
	\frac{\sqrt{2}}{\pi}P^\gamma(E;B_R)=\lim_{n\to\infty}(1-s_n)P^\gamma_{s_n}(F_n;B_R)=\frac{\sqrt{2}}{\pi}P^\gamma(F;B_R).
	$$
	Therefore $E$ is a local minimiser of $P^\gamma(\cdot;B_R)$. Choosing $F=E$ the inequalities in \eqref{eq:trentadue} become
	$$
	\lim_{n\to\infty}(1-s_n)P^\gamma_{s_n}(E_n;B_R)=\lim_{n\to\infty}(1-s_n)P^{\gamma,L}_{s_n}(E_n;B_R)=\frac{\sqrt{2}}{\pi}P^\gamma(E;B_R).
	$$
	\end{proof}

\paragraph*{\bf Funding}\ \\
A.C. has been partially supported by the TALISMAN project Cod. ARS01-01116. S.C. has been partially supported by the ACROSS project Cod. ARS01-00702.
D.A.L. has been supported by the Academy of Finland grant 314227. 
D.P. is member of G.N.A.M.P.A. of the Italian Istituto Nazionale di Alta Matematica (INdAM) and has been partially supported by the PRIN 2015 MIUR project 2015233N54.

\begin{bibdiv}
	\begin{biblist}
		
		\bib{AlbBel}{article}{
			author={Alberti, G.},
			author={Bellettini, G.},
			title={A non-local anisotropic model for phase transitions: asymptotic
				behaviour of rescaled energies},
			journal={European J. Appl. Math.},
			volume={9},
			date={1998},
			number={3},
			pages={261--284},
		}

		\bib{AmDeMa}{article}{
			author={Ambrosio, L.},
			author={De Philippis, G.},
			author={Martinazzi, L.},
			title={Gamma-convergence of nonlocal perimeter functionals},
			journal={Manuscripta Math.},
			volume={134},
			date={2011},
			number={3-4},
			pages={377--403},
		}
		
		\bib{AFP}{book}{
			author={Ambrosio, L.},
			author={Fusco, N.},
			author={Pallara, D.},
			title={Functions of bounded variation and free discontinuity problems},
			series={Oxford Mathematical Monographs},
			publisher={The Clarendon Press, Oxford University Press, New York},
			date={2000},
			pages={xviii+434},
		}

		
		\bib{BerPal}{article}{
			author={Berendsen, J.},
			author={Pagliari, V.},
			title={On the asymptotic behaviour of nonlocal perimeters},
			journal={ESAIM Control Optim. Calc. Var.},
			volume={25},
			date={2019},
			pages={Paper No. 48, 27 pages},
		}
		
		\bib{borell}{article}{
			author={Borell, C.},
			title={The Brunn-Minkowski inequality in Gauss space},
			journal={Invent. Math.},
			volume={30},
			date={1975},
			number={2},
			pages={207--216},
		}

\bib{BouBreMir}{article}{
	author={Bourgain, J.},
	author={Brezis, H.},
	author={Mironescu, P.},
	title={Another look at Sobolev spaces},
	journal={Optimal control and partial differential equations},
	publisher={IOS, Amsterdam},
    date={2001},
    pages={439--455},
}

	\bib{brezis}{article}{
	author={Brezis, H.},
	title={How to recognize constant functions. A connection with Sobolev
		spaces},
	journal={Russian Math. Surveys},
	volume={57},
	date={2002},
	number={4},
	pages={693--708},
}

		\bib{CafRoqSav}{article}{
			author={Caffarelli, L.},
			author={Roquejoffre, J.-M.},
			author={Savin, O.},
			title={Nonlocal minimal surfaces},
			journal={Comm. Pure Appl. Math.},
			volume={63},
			date={2010},
			number={9},
			pages={1111--1144},
		}

		\bib{CafSil}{article}{
			author={Caffarelli, L.},
			author={Silvestre, L.},
			title={An extension problem related to the fractional Laplacian},
			journal={Comm. Partial Differential Equations},
			volume={32},
			date={2007},
			number={7-9},
			pages={1245--1260},
		}
	
		\bib{CafVal}{article}{
		author={Caffarelli, L.},
		author={Valdinoci, E.},
		title={Uniform estimates and limiting arguments for nonlocal minimal
			surfaces},
		journal={Calc. Var. Partial Differential Equations},
		volume={41},
		date={2011},
		number={1-2},
		pages={203--240},
	}

	\bib{CaCiLaPa}{article}{
		author={Carbotti, A.},
		author={Cito, S.},
		author={La Manna, D. A.},
		author={Pallara, D.},
		title={A quantitative dimension free isoperimetric inequality for the fractional Gaussian perimeter},
		journal={Submitted Paper},
		date = {2020},
		eprint={https://arxiv.org/pdf/2011.10451.pdf},
	}
	
	\bib{CaDoPaPi}{article}{
		author={Carbotti, A.},
		author={Don, S.},
		author={Pallara, D.},
		author={Pinamonti, A.},
		title={Local minimizers and Gamma-convergence for nonlocal perimeters in Carnot Groups},
		journal={ESAIM:COCV},
		volume={27},
		date = {2021},
	}
		
		\bib{CarKer}{article}{
			author={Carlen, E. A.},
			author={Kerce, C.},
			title={On the cases of equality in Bobkov's inequality and Gaussian
				rearrangement},
			journal={Calc. Var. Partial Differential Equations},
			volume={13},
			date={2001},
			number={1},
			pages={1--18},
		}
		

\bib{ChaMorPon}{article}{
	author={Chambolle, A.},
	author={Morini, M.},
	author={Ponsiglione, M.},
	title={Nonlocal curvature flows},
	journal={Arch. Ration. Mech. Anal.},
	volume={218},
	date={2015},
	number={3},
	pages={1263--1329},
}

\bib{dalmaso}{book}{
	author={Dal Maso, G.},
	title={An introduction to $\Gamma$-convergence},
	series={Progress in Nonlinear Differential Equations and their
		Applications},
	volume={8},
	publisher={Birkh\"{a}user Boston, Inc., Boston, MA},
	date={1993},
	pages={xiv+340},
}

\bib{davila}{article}{
	author={D\'{a}vila, J.},
	title={On an open question about functions of bounded variation},
	journal={Calc. Var. Partial Differential Equations},
	volume={15},
	date={2002},
	number={4},
	pages={519--527},
}

		\bib{DL}{article}{
			author={De Rosa, A.},
			author={La Manna, D. A.},
			title={A nonlocal approximation of the Gaussian perimeter: Gamma
convergence and Isoperimetric properties
},
journal = {Communications on Pure \& Applied Analysis},
volume = {20},
number = {5},
pages = {2101-2116},
date = {2021}
}
	
	\bib{dipierro}{article}{
		author={Dipierro, S.},
		title={A comparison between the nonlocal and the classical worlds:
			minimal surfaces, phase transitions, and geometric flows},
		journal={Notices Amer. Math. Soc.},
		volume={67},
		date={2020},
		number={9},
		pages={1324--1335},
	}
	
	\bib{DiFiPaVa}{article}{
		author={Dipierro, S.},
		author={Figalli, A.},
		author={Palatucci, G.},
		author={Valdinoci, E.},
		title={Asymptotics of the $s$-perimeter as $s\searrow0$},
		journal={Discrete Contin. Dyn. Syst.},
		volume={33},
		date={2013},
		number={7},
		pages={2777--2790},
	}

		\bib{EhrScand}{article}{
			author={Ehrhard, A.},
			title={Sym\'{e}trisation dans l'espace de Gauss},
			language={French},
			journal={Math. Scand.},
			volume={53},
			date={1983},
			number={2},
			pages={281--301},
		}
	
	\bib{ehrhard}{article}{
		author={Ehrhard, A.},
		title={In\'{e}galit\'{e}s isop\'{e}rim\'{e}triques et int\'{e}grales de Dirichlet
			gaussiennes},
		language={French},
		journal={Ann. Sci. \'{E}cole Norm. Sup. (4)},
		volume={17},
		date={1984},
		number={2},
		pages={317--332},
	}
	

		


\bib{FonMul}{article}{
	author={Fonseca, I.},
	author={M\"{u}ller, S.},
	title={Quasi-convex integrands and lower semicontinuity in $L^1$},
	journal={SIAM J. Math. Anal.},
	volume={23},
	date={1992},
	number={5},
	pages={1081--1098},
}
	
	\bib{lombardini}{article}{
		author={Lombardini, L.},
		title={Fractional perimeters from a fractal perspective},
		journal={Adv. Nonlinear Stud.},
		volume={19},
		date={2019},
		number={1},
		pages={165--196},
	}
		
		\bib{LunMetPal}{article}{
			author={Lunardi, A.},
			author={Metafune, G.},
			author={Pallara, D.},
			title={The Ornstein-Uhlenbeck semigroup in finite dimensions},
			journal={Philos. Trans. R. Soc. Lond. Ser. A Math. Phys. Eng. Sci.},
			volume={378},
			date={2020},
		}

		\bib{MarSan}{book}{
			author={Mart\'{\i}nez Carracedo, C.},
			author={Sanz Alix, M.},
			title={The theory of fractional powers of operators},
			series={North-Holland Mathematics Studies},
			volume={187},
			publisher={North-Holland Publishing Co., Amsterdam},
			date={2001},
			pages={xii+365},
		}
		
		\bib{MazRosTol}{book}{
			author={Maz\'{o}n, J. M.},
			author={Rossi, J. D.},
			author={Toledo, J. J.},
			title={Nonlocal perimeter, curvature and minimal surfaces for measurable
				sets},
			series={Frontiers in Mathematics},
			publisher={Birkh\"{a}user/Springer, Cham},
			date={2019},
			pages={xviii+123},
		}

\bib{MazSha}{article}{
	author={Maz\cprime ya, V.},
	author={Shaposhnikova, T.},
	title={On the Bourgain, Brezis, and Mironescu theorem concerning limiting
		embeddings of fractional Sobolev spaces},
	journal={J. Funct. Anal.},
	volume={195},
	date={2002},
	number={2},
	pages={230--238},
}
		
		\bib{NovPalSir}{article}{
			author={Novaga, M.},
			author={Pallara, D.},
			author={Sire, Y.},
			title={A fractional isoperimetric problem in the Wiener space},
			journal={J. Anal. Math.},
			volume={134},
			date={2018},
			number={2},
			pages={787--800},
		}
	
	\bib{ponce}{article}{
		author={Ponce, A. C.},
		title={A new approach to Sobolev spaces and connections to
			$\Gamma$-convergence},
		journal={Calc. Var. Partial Differential Equations},
		volume={19},
		date={2004},
		number={3},
		pages={229--255},
	}

\bib{SavVal}{article}{
	author={Savin, O.},
	author={Valdinoci, E.},
	title={$\Gamma$-convergence for nonlocal phase transitions},
	journal={Ann. Inst. H. Poincar\'{e} Anal. Non Lin\'{e}aire},
	volume={29},
	date={2012},
	number={4},
	pages={479--500},
}

		\bib{StiTor}{article}{
			author={Stinga, P. R.},
			author={Torrea, J. L.},
			title={Extension problem and Harnack's inequality for some fractional
				operators},
			journal={Comm. Partial Differential Equations},
			volume={35},
			date={2010},
			number={11},
			pages={2092--2122},
		}
		
		\bib{SudCir}{article}{
			author={Sudakov, V. N.},
			author={Tsirelson, B. S.},
			title={Extremal properties of half-spaces for spherically invariant
				measures},
			language={Russian},
			note={Problems in the theory of probability distributions, II},
			journal={Zap. Nau\v{c}n. Sem. Leningrad. Otdel. Mat. Inst. Steklov. (LOMI)},
			volume={41},
			date={1974},
			pages={14--24, 165},
		}
	
	\bib{valdinoci}{article}{
		author={Valdinoci, E.},
		title={A fractional framework for perimeters and phase transitions},
		journal={Milan J. Math.},
		volume={81},
		date={2013},
		number={1},
		pages={1--23},
	}

	\end{biblist}
\end{bibdiv}
\end{document}